\title[Martin boundary of relatively hyperbolic groups]{The Martin boundary of relatively hyperbolic groups with virtually abelian parabolic subgroups}
\author[M. Dussaule]{Matthieu Dussaule}
\author[I. Gekhtman]{Ilya Gekhtman}
\author[V. Gerasimov]{Victor Gerasimov}
\author[L. Potyagailo]{Leonid Potyagailo}
\date{}
\newcommand\N{\mathbb{N}}
\newcommand\Z{\mathbb{Z}}
\newcommand\R{\mathbb{R}}
\newcommand\Ss{\mathds{S}}
\theoremstyle{definition}
\newtheorem{assumption}{Assumption}
\newtheorem*{assumption*}{Assumption 2'}
\theoremstyle{plain}
\newtheorem{definition}{Definition}[section]
\newtheorem{proposition}[definition]{Proposition}
\newtheorem{corollary}[definition]{Corollary}
\newtheorem{theorem}[definition]{Theorem}
\newtheorem{lemma}[definition]{Lemma}
\newtheorem*{thm*}{Theorem}
\newtheorem*{prop*}{Proposition}
\newtheorem*{lem*}{Lemma}
\theoremstyle{remark}
\newtheorem{remark}{Remark}[section]
\newtheorem*{rem*}{Remark}
\begin{document}
\begin{abstract}
Given  a  probability  measure  on  a  finitely  generated  group,  its  Martin
boundary is a way to compactify the group using the Green's function
of the corresponding random walk.
We give a complete topological characterization of the Martin boundary of finitely supported random walks on relatively hyperbolic groups with virtually abelian parabolic subgroups. In particular, in the case of nonuniform lattices in the real hyperbolic space $\mathcal{H}^{n}$, we show that the Martin boundary coincides with the $CAT(0)$ boundary of the truncated space, and thus when $n=3$, is homeomorphic to the Sierpinski carpet.
\end{abstract}
\maketitle
\section{Introduction and Statement of results}\label{Sectionintroduction}

\subsection{Random walks on relatively hyperbolic groups}\label{SectionRandomwalksrelhypgroups}
A probability measure $\mu$ on a countable group $\Gamma$ determines a $\Gamma$-invariant Markov chain with transition probabilities $p(x,y)=\mu(x^{-1}y)$, called a random walk.

Connecting asymptotic properties of this random walk to the geometry of Cayley graphs of $\Gamma$ has been a fruitful line of research. One way to do this is through relating the Green's function of $\mu$ to some natural metric on $\Gamma$, and the probabilistically defined Martin boundary of $\mu$ to some geometric boundary of $\Gamma$.

The Green's function $G$ of $(\Gamma,\mu)$ is defined as
$$G(x,y)=\sum^{\infty}_{n=0}\mu^{*n}(x^{-1}y).$$ 
It describes the amount of time a random path starting 
at $x$ is expected to spend at $y$.
We now fix a base point $o$ in $\Gamma$.
For each $y\in \Gamma$ the function 
$K_{y}:\Gamma \to \mathbb{R}$ defined by $K_{y}(x)=G(x,y)/G(o,y)$ is called a Martin kernel.
The compactification of $\Gamma$ in the space of functions $\Gamma \to \mathbb{R}$ of the Martin kernels $K_{y}:y\in \Gamma$ is called the Martin compactification $\overline{\Gamma}_{\mu}$ and $\partial_{\mu}\Gamma=\overline{\Gamma}_{\mu} \setminus \Gamma$ is called the Martin boundary.
These definitions also make sense for more general measures $\mu$ (see Section~\ref{SectionMartinboundariesofrandomwalks}).

Giving a geometric description of the Martin boundary is often a difficult problem. 
Margulis showed that for centered finitely supported random walks on nilpotent groups, the Martin boundary always is trivial \cite{Margulis}.
On the other hand, for noncentered random walks with exponential moment on abelian
groups of rank $k$, Ney and Spitzer \cite{NeySpitzer}
showed that the Martin boundary is homeomorphic to a sphere of dimension $k-1$.

For hyperbolic groups with finitely supported measures, Ancona \cite{Ancona} proved that the Green's function is roughly multiplicative along word geodesics. He used this to identify the Martin boundary with the Gromov boundary. 

A finitely generated group $\Gamma$ is called hyperbolic relative to a system of subgroups $\Omega$ if $\Gamma$ admits a convergence action on a compact metric space $T$ such that every point of $T$ is either conical or bounded parabolic and the elements of $\Omega$ are the stabilizers of the parabolic points.
We will give more details in Section~\ref{Sectionrelativelyhyperbolicgroups}.

We will always assume that the action $\Gamma \curvearrowright T$ is minimal and nonelementary, that is $T$ contains more than two points. We again refer to Section~\ref{Sectionrelativelyhyperbolicgroups} for more details.

For relatively hyperbolic groups with finitely supported measures, Gekhtman, Gerasimov, Potyagailo and Yang proved a generalization of Ancona's multiplicativity estimate, see \cite[Theorem~5.2]{GGPY}.
Its most general formulation uses the Floyd distance, which is a rescaling of the word distance.
It will be defined in Section~\ref{SectionrelhypandFloyd}.

The authors of \cite{GGPY} used this estimate to show that the Martin boundary covers the Bowditch boundary -the compact set $T$ of the above definition, or equivalently, the boundary of a proper geodesic Gromov hyperbolic space $X$ on which $\Gamma$ acts geometrically finitely.  Moreover the preimage of any conical point is a singleton. Determining the Martin boundary is thus reduced to determining the preimage of parabolic points. 

Dussaule \cite{Dussaule} gave a geometric description of the Martin boundary of finitely supported random walks on free products of abelian groups, identifying it with the visual boundary of a $CAT(0)$ space on which the group acts cocompactly.

The key technical result of \cite{Dussaule} extends results of Ney and Spitzer \cite{NeySpitzer} to more general chains.
It states that the Martin boundary of non-centered -or strictly sub-Markov- chains on $\mathbb{Z}^{k}\times \{1,...,N\}$, $N\in \N$, is a sphere of dimension $k-1$.
We give a precise formulation below (see Proposition~\ref{latticeMartinboundary}).

In this paper, we extend these two results of \cite{GGPY} and \cite{Dussaule} to determine the Martin boundary of finitely supported measures on groups relatively hyperbolic with respect to virtually abelian subgroups. These include the following well-known classes of groups:
\begin{itemize}
\item geometrically finite Kleinian groups,
\item limit groups,
\item groups acting freely on $\mathbb{R}^n$ trees.
\end{itemize}

\medskip
If $\overline{\Gamma}$ is a compactification of $\Gamma$, define the corresponding boundary as $\partial \Gamma:=\overline{\Gamma}\setminus \Gamma$.
Call a boundary $\partial \Gamma$ of $\Gamma$ a $Z$-boundary if the following holds (see Section~\ref{Sectioncompactification} for the details): an unbounded sequence $(g_{n})$ of $\Gamma$ converges to a point in $\partial \Gamma$ if and only if either $(g_{n})$ converges to a conical point in the Bowditch boundary, or there is a coset $gP$ of a parabolic subgroup $P$ such that one (and thus any) closest point projections of $(g_{n})$ to $gP$ converges to a point in the $CAT(0)$ boundary of $gP$.

Our main result is the following.
\begin{theorem}\label{maintheorem}
Let $\Gamma$ be a  hyperbolic group
relative to a collection of infinite virtually abelian subgroups.
Let $\mu$ be a measure on $\Gamma$ whose finite support generates $\Gamma$ as a semigroup.
Then, the Martin boundary is a $Z$-boundary.
\end{theorem}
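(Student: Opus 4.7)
The plan is to establish the convergence criterion defining a $Z$-boundary directly, i.e.\ to show that a sequence $(g_n)\subset \Gamma$ converges in the Martin boundary if and only if either $(g_n)$ converges to a conical point of the Bowditch boundary, or the closest-point projection of $(g_n)$ to some parabolic coset $gP$ converges in the $CAT(0)$ boundary of $gP$. The conical case is essentially already done: by \cite{GGPY}, the Martin compactification dominates the Bowditch compactification with singleton fibers over every conical point, so for a sequence with conical Bowditch limit Martin convergence and Bowditch convergence are equivalent. The real content of the theorem is the description of the fiber over a parabolic point, and that is where I would concentrate.

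Fix a parabolic subgroup $P$, let $p\in T$ be its fixed point, and suppose $(g_n)\subset \Gamma$ has $g_n\to p$ in the Bowditch topology. Using the closest-point projection to the coset $P$, write $g_n = h_n u_n$ with $h_n\in P$ and $u_n$ bounded in the word metric. The Ancona-type inequality of \cite[Theorem~5.2]{GGPY}, applied to approximate $(o,g_n)$-paths that pass through $h_n$, yields
\begin{equation*}
G(x,g_n) = G(x,h_n)\,\bigl(C(u_n) + o(1)\bigr)
\end{equation*}
uniformly for $x$ in any compact set of $\Gamma$, where the ratio depends only on the bounded tail $u_n$. Hence Martin convergence of $(g_n)$ reduces to Martin convergence of the projected sequence $(h_n)\subset P$, and it remains to identify the Martin boundary along sequences staying on the single coset $P$.

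To analyze $(h_n)\subset P$, I would introduce the first-return random walk $\bar\mu$ on $P$: the law of the first return of the $\mu$-walk to $P$, killed if it never returns. Since $\mu$ has finite support and $P$ has infinite index, $\bar\mu$ is strictly sub-probabilistic; the Ancona inequality combined with finite generation yields irreducibility and exponential moments. Because $P$ is virtually abelian, after passing to a finite-index subgroup $\bar\mu$ becomes a sub-Markov chain on $\Z^k\times F$ for some finite set $F$, and Proposition~\ref{latticeMartinboundary} (Dussaule's extension of Ney--Spitzer) identifies its Martin boundary with the sphere $S^{k-1}$ and identifies convergence of $h_n$ in this Martin boundary with convergence of $h_n$ in the $CAT(0)$ boundary of $P$. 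The identity
\begin{equation*}
G(x,h)=\sum_{q\in P}H(x,q)\,\bar G(q,h),
\end{equation*}
where $H$ is the hitting kernel of $P$, then transfers this description back to $\Gamma$: Ancona's inequality forces the normalized $\mu$-Martin kernel $K_h(x)$ to factor asymptotically as a function of the $\bar\mu$-Martin kernel $\bar K_h(e)$ times a factor depending only on how $x$ sees $P$ from outside, giving both directions of the $Z$-boundary criterion.

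The main obstacle I expect is precisely this passage from the intrinsic Martin boundary of the induced walk $\bar\mu$ on $P$ to the ambient Martin boundary of $\mu$ on $\Gamma$. One must show not only that Martin ratios of $\mu$ factor through Martin ratios of $\bar\mu$ in the limit, but also that this factorization is continuous under the $CAT(0)$ topology on the sphere $\partial P$; in particular, that sequences in $P$ with irregular direction (oscillating between distinct $CAT(0)$ limits) fail to converge in the Martin topology as well. Carrying this out cleanly requires both the uniform Ancona estimate of \cite{GGPY} along Floyd geodesics and the lattice-chain analysis of \cite{Dussaule}, and controlling the interaction between the coarse word geometry of $\Gamma$ and the fine $\Z^k$-geometry internal to $P$ is where the bulk of the technical work will live.
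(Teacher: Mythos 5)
Your overall strategy is the paper's: treat conical points via the results of \cite{GGPY}, treat the fiber over a parabolic point via a first-return chain analyzed with Dussaule's extension of Ney--Spitzer, and glue the two with the relative Ancona inequality. But there is a genuine gap where you induce the walk on $P$ itself. The identification of the Martin boundary of the induced chain with $\mathbb{S}^{k-1}$ (Proposition~\ref{latticeMartinboundary}) requires Assumption~\ref{Assumption1}: the sublevel set $\{\lambda(u)\leq 1\}$ of the Perron eigenvalue of the Laplace-transform matrix $F(u)$ must be compact and contained in the domain where $F$ is finite. By Proposition~\ref{inducedchainassumption1} this holds provided the induced chain has exponential moments up to a specific threshold $M$ determined by the ambient chain. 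The first-return chain to $P$ does have \emph{some} exponential moment (Kesten's decay $\mu^{*n}(g)\leq e^{-\alpha n}$ plus the fact that an excursion achieving $P$-displacement $\|z\|$ has length at least $c\|z\|$ gives moments up to $\alpha c$), but nothing guarantees $\alpha c\geq M$, and your sentence ``the Ancona inequality combined with finite generation yields \dots exponential moments'' elides exactly this point. The paper's fix is to induce not on $P$ but on a large neighborhood $N_\eta(P)$: by Gerasimov--Potyagailo, geodesics staying $\eta/3$-far from $P$ have projections to $P$ of uniformly bounded diameter, so an excursion outside $N_\eta(P)$ with projection displacement $\|z\|$ has length at least $R_0(\eta)\|z\|$ with $R_0(\eta)\to\infty$ (Proposition~\ref{exponentialmoments}), while the threshold $M$ stays independent of $\eta$. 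Without this thickening the appeal to the Ney--Spitzer machinery is unjustified.

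A secondary error: you write $g_n=h_nu_n$ with $h_n=\pi_P(g_n)$ and $u_n$ \emph{bounded} in the word metric. A sequence whose projection converges in $\partial P$ (or which converges to the parabolic point in the Bowditch topology) need not stay at bounded distance from the coset $P$; the paper explicitly treats the case where $(g_n)$ leaves every bounded neighborhood of $P$. The factorization you want still holds, but it comes from the relative Ancona inequality applied at $z_n=\pi_P(g_n)$ (where the Floyd distance $\delta^f_{z_n}(x,g_n)$ is uniformly bounded below): decomposing $G(x,g_n)$ at the last visit to $B_\eta(z_n)$, the unbounded factor $G(u_n,g_n;B_\eta^c(z_n))$ cancels in the Martin ratio, so no boundedness of $d(g_n,P)$ is needed or available. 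Finally, the converse direction --- that Martin convergence forces either conical Bowditch convergence or convergence of the projections in $\partial P$ --- should be argued explicitly; in the paper it follows because two subsequences with distinct $\partial P$ directions have distinct Martin limits, by the explicit exponential form of the limiting kernels in Proposition~\ref{latticeMartinboundary}.
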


We also prove the following. 
\begin{theorem}\label{minimal}
Let $\Gamma$ be a  hyperbolic group
relative to a collection of infinite virtually abelian subgroups.
Let $\mu$ be a measure on $\Gamma$ whose finite support generates $\Gamma$ as a semigroup.
Then, every point of the Martin boundary is minimal.
\end{theorem}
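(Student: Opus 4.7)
The plan is to split the argument according to the two types of boundary points described by the $Z$-boundary structure of Theorem~\ref{maintheorem}: points lying over a conical limit point of the Bowditch boundary, and points lying over a bounded parabolic point. For each type, I will identify a candidate ``approximating'' model random walk whose Martin boundary is already known to be minimal, and then transfer minimality to $\partial_{\mu}\Gamma$ using the Ancona--type estimates of \cite{GGPY} (phrased in terms of the Floyd metric).

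\textbf{Conical points.} Fix a boundary point $\xi \in \partial_{\mu}\Gamma$ whose image in the Bowditch boundary is a conical limit point; by the covering property established in \cite{GGPY} the fiber over a conical point is a singleton, so $K_\xi$ is unambiguous. The first step is to take a sequence $y_n \to \xi$ that tracks a (relative) geodesic ray from $o$ to $\xi$, exiting every parabolic coset. The Ancona-type multiplicativity of \cite[Theorem~5.2]{GGPY} gives, for any point $z$ at finite Floyd distance from this ray and for all large $n$, the comparison $G(o,y_n) \asymp G(o,z) G(z, y_n)$, with multiplicative error tending to $1$ as $z$ is pushed along the ray toward $\xi$. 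Given a positive $\mu$-harmonic function $h \leq K_\xi$, write its Poisson--Martin representation $h = \int K_\eta \, d\nu(\eta)$ and use the relative Ancona inequality as in Ancona's original proof for hyperbolic groups to force the representing measure $\nu$ to be concentrated at $\xi$; minimality follows. This step is a direct adaptation of the hyperbolic argument, with the Floyd metric replacing the word metric in the relative setting.

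\textbf{Parabolic points.} Now let $\xi$ sit in the fiber over a bounded parabolic point $p$ with virtually abelian stabilizer $P$, and let $\theta \in \partial_{CAT(0)}P$ be the direction coding $\xi$ under the $Z$-boundary identification. Pick $y_n \to \xi$; after passing to a subsequence and projecting one may assume $y_n \in P$ (up to bounded error) and that the projections $p_n \to \theta$ in the $CAT(0)$ compactification of $P$. The key tool here is the \emph{first-return random walk} on $P$: its Green's function and the Green's function of $\mu$ are comparable on $P$ by a relative Ancona inequality that controls the ``excursions'' outside $P$. This induced walk is a non-centered strictly substochastic chain on $\mathbb{Z}^{k}\times\{1,\ldots,N\}$, so Proposition~\ref{latticeMartinboundary} (the Dussaule extension of Ney--Spitzer) applies: its Martin boundary is a sphere $S^{k-1}$ and every point is minimal. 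I would then prove a factorization formula relating $K_\xi$ on $\Gamma$ to the induced Martin kernel $K^{P}_{\theta}$ on $P$ (essentially, $K_\xi$ restricted to $P$ equals $K^{P}_\theta$ up to a multiplicative constant, and outside $P$ it is determined by one-step transitions landing in $P$). Given $0 \leq h \leq K_\xi$, restrict $h$ to $P$: this restriction is superharmonic for the induced walk and dominated by $K^{P}_{\theta}$, which is minimal; hence $h|_P$ is a scalar multiple of $K^{P}_\theta$, and the factorization propagates this identity to all of $\Gamma$, giving $h = cK_\xi$.

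\textbf{Main obstacle.} The conical case is essentially classical once the GGPY estimate is in hand. The technically demanding step is the parabolic case, specifically the factorization relating $K_\xi$ and $K^P_\theta$. What makes this delicate is that one must control the Green's function of $\mu$ along excursions that leave $P$ and return, uniformly as $y_n$ escapes to infinity along $\theta$. This requires a ``relative Ancona inequality at a parabolic subgroup'': Green's function contributions from paths making long excursions outside $P$ must be negligible compared to contributions of paths that stay close to $P$, in a way that is quantitatively compatible with the drift direction $\theta$. This is where the virtual abelianness of $P$ and the finite support of $\mu$ are essential, both to ensure a uniform spectral gap for the induced walk and to apply the Dussaule/Ney--Spitzer asymptotics. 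Once this estimate is proven, the descent of minimality from the induced walk to $\Gamma$ is formal.
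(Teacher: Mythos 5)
Your treatment of conical points is essentially the paper's argument: the relative Ancona inequality of \cite{GGPY} is used (via \cite[Corollary~7.9]{GGPY}, quoted as Lemma~\ref{bowditchsupport}) to show that the representing measure of $K_{\xi}$ is supported in the fiber $F^{-1}(F(\xi))$, which is a singleton when $F(\xi)$ is conical. That part is fine.

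The parabolic case, however, contains a genuine gap. You restrict a harmonic function $0\leq h\leq K_{\xi}$ to $P$ and argue: ``$h|_P$ is superharmonic for the induced walk and dominated by $K^{P}_{\theta}$, which is minimal; hence $h|_P$ is a scalar multiple of $K^{P}_{\theta}$.'' Minimality of a harmonic function $\phi$ only forces proportionality for \emph{harmonic} minorants, not for superharmonic ones: by the Riesz decomposition a superharmonic function dominated by $\phi$ is a potential plus a harmonic part, and only the harmonic part is constrained to be $c\phi$; the potential part can be nonzero (and for the strictly sub-Markov first-return chain, nonzero potentials dominated by a bounded-below harmonic function certainly exist). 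The restriction of a $\mu$-harmonic function on $\Gamma$ to $N_{\eta}(P)$ is in general only superharmonic for the first-return kernel (the defect being $\lim_n \mathbb{E}_x[h(X_n)\mathds{1}_{\tau>n}]$, which need not vanish), so your conclusion $h|_P=cK^{P}_{\theta}$ does not follow. The subsequent ``propagation'' of the identity from $P$ to all of $\Gamma$ is also unjustified: $h-cK_{\xi}$ need not have a sign, so no maximum principle applies, and a harmonic function on $\Gamma$ is not determined by its values on $P$ together with one-step transitions. The paper avoids both problems by staying on the level of the representing measure: Lemma~\ref{bowditchsupport} localizes $\nu_0$ to $\partial P$, and then the separation statement of Corollary~\ref{sepcor} (Dussaule's Proposition~6.3, \emph{not} Proposition~\ref{latticeMartinboundary}, which only gives convergence of the kernels) produces, for each $\alpha_1\neq\alpha_0$ in $\partial P$, a sequence $(g_n)$ in $N_{\eta}(P)$ with $K_{\alpha}(g_n)\to\infty$ uniformly on a neighborhood $\mathcal{U}$ of $\alpha_1$ while $K_{\alpha_0}(g_n)\to 0$; integrating $K_{\alpha}(g_n)$ against $\nu_0$ forces $\nu_0(\mathcal{U})=0$, hence $\nu_0=\delta_{\alpha_0}$. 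To repair your argument you would need either this uniform separation estimate or some other mechanism to rule out mass of $\nu_0$ elsewhere in $\partial P$; the factorization heuristic alone does not supply it.
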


We recover from Theorem~\ref{maintheorem}, as a particular case, that the Martin boundary of a non-virtually cyclic hyperbolic group is the Gromov boundary.
Since two $Z$-boundaries are equivariantly homeomorphic (see Lemma~\ref{equivalentZbdry} for a precise statement), Theorem~\ref{maintheorem} gives a complete description of the Martin boundary up to homeomorphism.

\medskip
There is a particularly simple geometric construction of a $Z$-boundary when $\Gamma$ is a nonuniform lattice in the real hyperbolic $n$-space $\mathcal{H}^{n}$. By removing from $\mathcal{H}^{n}$ a $\Gamma$-equivariant collection of disjoint horoballs based at parabolic points, we obtain a $CAT(0)$ space (for the shortest-path metric) on which $\Gamma$ acts cocompactly. One can easily check that the visual boundary of this $CAT(0)$ space is a $Z$-boundary.
In particular, when $n=3$, the $Z$-boundary is homeomorphic to the sphere $\Ss^2$ with a countable and dense set of discs removed.
Thus, the $Z$-boundary is homeomorphic to the Sierpinski carpet.

Dahmani \cite{Dahmani} proposed a construction of a $Z$-boundary for any relatively hyperbolic group with virtually abelian parabolic subgroups, using the so-called coned-off graph of $\Gamma$. See the appendix for details.
Our result implies that both these constructions are equivariantly homeomorphic to the Martin boundary.

\medskip
Our proofs of Theorems~\ref{maintheorem} and~\ref{minimal} use both the deviation inequalities of \cite{GGPY} and the generalization of Ney and Spitzer from \cite{Dussaule}. Roughly stated, we show that the preimage of a parabolic point on the Bowditch boundary, with stabilizer $P$, is homemorphic to the Martin boundary of a neighborhood of $P$ with a finite (though not probability) measure induced by first return times, which by the result of \cite{Dussaule} is a sphere of the appropriate dimension.
Actually the proof is a little bit more technically involved and we now give some more details.

\medskip
To show that the Martin boundary $\partial_{\mu}\Gamma$ is a $Z$-boundary, we have to deal with two types of trajectories, namely those converging to a conical point in the Bowditch boundary and those whose projections converge to a point in the geometric boundary of a parabolic subgroup.

For the first type of trajectories, we use relative Ancona inequalities (Theorem~\ref{Ancona1}).
It was already proved in \cite{GGPY} that such sequences $(g_n)$ converge in the Martin boundary.
Basically, when $(g_n)$ converges to a conical limit point, one can choose an increasing number of transition points on a geodesic from a base point $o$ to $(g_n)$.
This number of transition points tends to infinity.
Relative Ancona inequalities roughly state that the random walk follows geodesic along transition points with large probability.
Forcing the paths to go through an increasing number of bounded neighborhoods of transition points then leads to convergence of the Martin kernels.

For the second type of trajectories, we want to study the induced random walk on a parabolic subgroup $P$.
However, this random walk is not finitely supported so we cannot apply Dussaule's result as it is stated above.
Using properties of the Floyd metric and results of Gerasimov and Potyagailo relating these with the geometry of the Cayley graph, we prove that the induced random walk on a sufficiently large neighborhood of $P$ has large exponential moments.
Using more precise statements of \cite{Dussaule}, we prove that this is enough to conclude: if $(g_n)$ converges in the geometric boundary of $P$, then it converges in the Martin boundary and two different points in the geometric boundary give rise to two different points in the Martin boundary.

\subsection{Organization of the paper}\label{organization}
The paper is divided into five main parts, besides the introduction.
Section~\ref{SectionMartinboundariesofrandomwalks} is devoted to give the necessary probabilistic background on random walks, Markov chains, and their Martin boundaries.

Section~\ref{Sectionrelativelyhyperbolicgroups} is about relatively hyperbolic groups.
In Sections~\ref{SectionrelhypandFloyd} and~\ref{Sectionotherviewpointsonrelhyp} we give different equivalent definitions of those groups and state results of Gerasimov and Potyagailo about the interplay between the Floyd distance and the geometry of the Cayley graph of such groups.
We also define properly what is a $Z$-boundary and what is the geometric compactification of a parabolic subgroup in Section~\ref{Sectioncompactification}.

In Section~\ref{SectionBackgroundMartinboundaries}, we give the necessary geometric background on Martin boundaries for the proof of our main theorem.
In Section~\ref{SectionAnconainequalities}, we state the relative Ancona inequalities obtained in \cite{GGPY}. These inequalities will be used throughout the proofs, especially when we deal with trajectories converging to conical limit points.
In Section~\ref{Sectionthickenedperipheralgroups}, we state the results of Dussaule about Martin boundaries of chains on $\Z^k\times \{1,...,N\}$.
This part is a bit technical and we extend his results to deal later with trajectories converging in the geometric boundary of a parabolic subgroup. Corollary~\ref{markovsummary} could be taken as a black box to prove Proposition~\ref{propconvergencehorospheres} in the following section.

In Section~\ref{SectionconvergenceMartinkernels}, we prove our main theorem, Theorem~\ref{maintheorem}.
We first deal with conical limit points in Section~\ref{Sectionconvergenceconical}, using results of Section~\ref{SectionAnconainequalities} and then with parabolic subgroups in Section~\ref{Sectionconvergenceparabolic}, using results of Section~\ref{Sectionthickenedperipheralgroups}.

In Section~\ref{Sectionminimality}, we prove Theorem~\ref{minimal}, that is, the Martin boundary is minimal.
Again, we will deal separately with trajectories converging to conical limit points and trajectories converging in the boundary of parabolic subgroups.

In the appendix, we give a geometric construction of a $Z$-boundary, using a construction of Dahmani in \cite{Dahmani}.

\subsection{Acknowledgements}
We would like to thank Sebastien Gouëzel for helpful advice. We also thank Wenyuan Yang for helpful conversations.

\section{Martin boundaries of random walks}\label{SectionMartinboundariesofrandomwalks}
Let us give here a proper definition of the Martin boundary and the minimal Martin boundary.
In this paper, we deal with random walks on groups,
but during the proofs, we will restrict the random walk to thickenings of peripheral subgroups and we will not get actual random walks. Thus, we need to define Martin boundaries for more general transition kernels.

Consider a countable space $E$
and equip $E$ with the discrete topology.
Fix some base point $o$ in $E$.
Consider a transition kernel $p$ on $E$ with finite total mass, that is
$p:E\times E\rightarrow \R_+$ satisfies
$$\forall x\in E, \sum_{y\in E}p(x,y)<+\infty.$$
It is often required that the total mass is 1 and in that case, the transition kernel defines a Markov chain on $E$.
In general, we will say that $p$ defines a chain on $E$ and we will sometimes assume that this chain is sub-Markov, that is the total mass is smaller than 1.
If $\mu$ is a probability measure on a finitely generated group $\Gamma$, then $p(g,h)=\mu(g^{-1}h)$ is a probability transition kernel and the Markov chain is the random walk associated to $\mu$.

Define in this context the Green's function $G$ as
$$G(x,y)=\sum_{n\geq 0}p^{(n)}(x,y)\in [0,+\infty],$$
where $p^{(n)}$ is the $n$th convolution power of $p$, i.e.\
$$p^{(n)}(x,y)=\sum_{x_1,...,x_{n-1}}p(x,x_1)p(x_1,x_2)\cdots p(x_{n-1},y).$$

\begin{definition}
Say that the chain defined by $p$ is finitely supported if for every $x\in E$, the set of $y\in E$ such that $p(x,y)>0$ is finite.
\end{definition}

\begin{definition}
Say that the chain defined by $p$ is irreducible if
for every $x,y\in E$, there exists $n$ such that $p^{(n)}(x,y)>0$.
\end{definition}

For a Markov chain, this means that one can go from any $x\in E$ to any $y\in E$ with positive probability.
In this setting, the Green's function $G(x,y)$ is closely related to the probability that a $\mu$-governed path starting at $x$ ever reaches $y$.
Indeed, the strong Markov property shows that the latter quantity is equal to $\frac{G(x,y)}{G(y,y)}$.

Notice that in the case of a random walk on a group $\Gamma$, the Green's function is invariant under left multiplication, so that
$G(x,x)=G(o,o)$ for every $x$.
Thus, up to some constant, $G(x,y)$ is the probability to go from $x$ to $y$.
Moreover, the irreducibility of the chain is equivalent to the condition that the support of the measure $\mu$ generates $\Gamma$ as a semigroup.

\medskip
\emph{In particular, in the context of Theorem~\ref{maintheorem}, the transition kernel defined by the probability measure $\mu$ is irreducible.}

\medskip
We will also use the following definition during our proofs.

\begin{definition}
Say that the chain defined by $p$ is strongly irreducible if
for every $x,y\in E$, there exists $n_0$ such that $\forall n\geq n_0$, $p^{(n)}(x,y)>0$.
\end{definition}

We will also assume that the chain is transient, meaning that
the Green's function is everywhere finite.
For a Markov chain, this just means that almost surely, a path starting at $x$ returns to $x$ only a finite number of times.

Consider an irreducible transient chain $p$. For $y\in E$, define the Martin kernel based at $y$ as
$$K_y(x)=\frac{G(x,y)}{G(o,y)}.$$
The Martin compactification of $E$ with respect to $p$ (and $o$) is a compact space containing $E$ as an open and dense space,
whose topology is described as follows. A sequence $(y_n)$ in $E$ converges to a point in the Martin compactification if and only if
the sequence $K_{y_n}$ converges pointwise.
Up to isomorphism, it does not depend on the base point $o$ and we denote it by $\overline{E}_{\mathcal{M}}$.
We also define the Martin boundary as $\partial_{\mathcal{M}}E=\overline{E}_{\mathcal{M}}\setminus E$.
We refer to \cite{Sawyer} for a complete construction of the Martin compactification.

Seeing the Martin kernel $K$ as a function of two variables $x$ and $y$,
the Martin compactification is then the smallest compact space $M$ in which $E$ is open and dense
and such that $K$ can be continuously extended to the space $E\times M$.
If $\tilde{y}\in \overline{E}_{\mathcal{M}}$, denote by $K_{\tilde{y}}$ the extension of the Martin kernel.

In the particular case of a symmetric Markov chain, that is a Markov chain satisfying $p(x,y)=p(y,x)$,
the Green's distance, which was defined by Brofferio and Blachère in \cite{BrofferioBlachere} as
$$d_G(x,g)=-\mathrm{ln}\mathds{P}(x\rightarrow y),$$
is actually a metric
and the Martin compactification of $E$ with respect to the Markov chain $p$ is
the horofunction compactification of $E$ for this metric.

Now, assume that $E=\Gamma$ is a finitely generated group and that the transition kernel $p$ is defined by a probability measure $\mu$.
In that case, denote by $\overline{\Gamma}_{\mu}$ the Martin compactification and by $\partial_{\mu}\Gamma$ the Martin boundary.
The action by (left) multiplication of $\Gamma$ on itself extends to an action of $\Gamma$ on $\overline{\Gamma}_{\mu}$.

\medskip
One important aspect of the Martin boundary is its relation with harmonic functions.
Recall that if $p$ is a transition kernel on a countable space $E$, a harmonic function is a function $\phi:E \rightarrow \R$ such that $p\phi=\phi$, that is,
$$\forall x \in E, \phi(x)=\sum_{y\in E}p(x,y)\phi(y).$$
We have the following key property (see \cite[Theorem~4.1]{Sawyer}).
\begin{proposition}
Let $p$ be a irreducible transient transition kernel on a countable space $E$.
For any non-negative harmonic function $\phi$, there exists a measure $\nu$ on the Martin boundary $\partial_{\mathcal{M}} E$ of $E$ such that
$$\forall x\in E, \phi(x)=\int_{\partial_{\mathcal{M}} E}K_{\tilde{x}}(x)\mathrm{d}\nu(\tilde{x}).$$
\end{proposition}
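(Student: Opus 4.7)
The plan is to approximate $\phi$ by finite sums of Martin kernels $K_z$, $z\in E$, whose weights are supported further and further out, and then extract a weak limit in the Martin compactification $\overline{E}_{\mathcal{M}}$.

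First I would fix an exhaustion $E=\bigcup_n E_n$ by finite subsets with $o\in E_1\subset E_2\subset\cdots$, and let $T_n$ denote the first time the chain governed by $p$ exits $E_n$. Applying the minimum principle (or optional stopping to the non-negative martingale $\phi(X_k)$, which is controlled because the chain is transient and each $E_n$ is finite) gives, for every $x\in E_n$,
$$\phi(x)=\sum_{y\in E\setminus E_n}H_n(x,y)\,\phi(y),$$
where $H_n(x,y)=\mathds{P}_x[X_{T_n}=y,\ T_n<\infty]$ is the harmonic measure of $E_n^c$ seen from $x$. The goal is then to repackage the $x$-dependence into Martin kernels.

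The key intermediate step is to use a last-exit / balayage decomposition of $H_n$ to produce a finite positive measure $\nu_n$ on $E$, concentrated on $E\setminus E_n$, such that
$$\phi(x)=\sum_{z\in E}K_z(x)\,\nu_n(\{z\})\qquad\text{for every }x\in E.$$
Evaluating at $x=o$ and using $K_z(o)=1$ gives $\nu_n(E)=\phi(o)$, so the total masses are uniformly bounded. This step is the potential-theoretic heart of the argument and is where the harmonicity of $\phi$ is essentially used.

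Finally I would appeal to compactness: the $\nu_n$ form a bounded family of positive measures on the compact metrizable space $\overline{E}_{\mathcal{M}}$, so a subsequence converges weakly to a positive measure $\nu$ on $\overline{E}_{\mathcal{M}}$. Since $\nu_n$ is supported in $E\setminus E_n$, the limit $\nu$ assigns no mass to any finite subset of $E$, hence no mass to $E$, so $\nu$ is carried by $\partial_{\mathcal{M}}E$. For each fixed $x\in E$ the map $\tilde{y}\mapsto K_{\tilde{y}}(x)$ is continuous on $\overline{E}_{\mathcal{M}}$ by the very definition of the Martin topology, so weak convergence of the $\nu_{n_k}$ against this bounded continuous test function yields
$$\phi(x)=\int_{\partial_{\mathcal{M}}E}K_{\tilde{y}}(x)\,d\nu(\tilde{y}).$$
The main obstacle is the explicit construction of the intermediate measures $\nu_n$; the passage to the limit is then a soft functional-analytic consequence of the universal property of the Martin compactification.
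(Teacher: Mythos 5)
First, a remark on context: the paper does not prove this proposition at all; it is quoted from Sawyer \cite[Theorem~4.1]{Sawyer}. So I am comparing your sketch with the standard argument rather than with an internal proof. Your overall architecture is the right one (a family of approximating measures of total mass $\phi(o)$, weak-$*$ compactness on $\overline{E}_{\mathcal{M}}$, continuity and boundedness of $\tilde{y}\mapsto K_{\tilde{y}}(x)$ for fixed $x$), and your first and last steps are correct as written. The problem is the middle step, which you yourself flag as the heart of the matter: it is not only left unproved, it is misstated in a way that cannot be repaired. An exact identity $\phi(x)=\sum_{z\in E}K_z(x)\,\nu_n(\{z\})$ valid for \emph{every} $x\in E$, with $\nu_n$ a positive finite measure on $E$, forces $\phi\equiv 0$: it says $\phi=\sum_z G(\cdot,z)f_n(z)$ with $f_n(z)=\nu_n(\{z\})/G(o,z)\geq 0$, i.e.\ $\phi$ is a potential, and then dominated convergence (using $\sum_{m\geq k}p^{(m)}(x,z)\downarrow 0$) gives $p^{(k)}\phi\to 0$ pointwise, contradicting $p^{(k)}\phi=\phi$. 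The identity can only hold for $x$ in a finite set. Moreover, the balayage you propose goes onto the wrong set: the reduced function of the harmonic $\phi$ onto the cofinite set $E\setminus E_n$ is $\phi$ itself (your first display, extended trivially off $E_n$), hence harmonic with zero Riesz charge, so it produces no measure; and the harmonic measure $H_n(x,\cdot)$ has no reason to factor as $K_y(x)$ times a function of $y$ alone, which is what would be needed to repackage your first display into Martin kernels.

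The standard fix reduces onto the \emph{finite} sets instead. Set $u_n(x)=\mathbb{E}_x\bigl[\phi(X_{T_n});\,T_n<\infty\bigr]$, where $T_n$ is the hitting time of $E_n$. Then $u_n$ is superharmonic, $u_n=\phi$ on $E_n$, and $u_n$ is a \emph{potential}: $p^{(k)}u_n(x)\leq(\max_{E_n}\phi)\cdot\mathbb{P}_x[X_m\in E_n\text{ for some }m\geq k]\to 0$ because a transient chain visits the finite set $E_n$ only finitely often. Hence $u_n=\sum_{z}G(\cdot,z)f_n(z)$ with $f_n=u_n-pu_n\geq 0$ supported on $E_n$, which gives $\phi(x)=\sum_{z\in E_n}K_z(x)\,G(o,z)f_n(z)$ for all $x\in E_n$ (enough for your limiting argument, where $x$ is fixed and $n\to\infty$), with total mass $\phi(o)$ upon evaluating at $o$. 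Since the resulting $\nu_n$ now lives on $E_n$ rather than on $E\setminus E_n$, your final claim that the weak limit charges no point of $E$ needs one extra line: for fixed $z_0$, $\nu_n(\{z_0\})=G(o,z_0)\bigl(\phi(z_0)-pu_n(z_0)\bigr)\to 0$, because $u_n\uparrow\phi$ pointwise and hence $pu_n\uparrow p\phi=\phi$ by monotone convergence; as singletons of $E$ are clopen in $\overline{E}_{\mathcal{M}}$, this passes to the limit measure. With these corrections your scheme becomes the standard proof.
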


Let $\phi$ be a non-negative harmonic function. It is called minimal if any other non-negative harmonic function $\psi$ such that $\psi(x)\leq \phi(x)$ for every $x\in E$ is proportional to $\phi$.
The minimal Martin boundary is the set
$$\partial_{\mathcal{M}}^mE=\{\tilde{x}\in \partial_{\mathcal{M}} E,K_{\tilde{x}}(\cdot) \text{ is minimal harmonic}\}.$$
It is thus a subset of the full Martin boundary $\partial_{\mathcal{M}} E$.
A classical representation theorem of Choquet shows that for any non-negative harmonic function $\phi$, one can choose the support of the measure $\nu$ lying in $\partial_{\mathcal{M}}^mE$.
The measure $\nu$ is then unique
(see the first section of \cite{Sawyer}).
In other words, for any such function $\phi$, there exists a unique measure $\mu_{\phi}$ on $\partial_{\mathcal{M}}^mE$ such that
$$\forall x\in E, \phi(x)=\int_{\partial_{\mathcal{M}}^mE}K_{\tilde{x}}(x)\mathrm{d}\mu_{\phi}(\tilde{x}).$$

\section{Relatively hyperbolic groups}\label{Sectionrelativelyhyperbolicgroups}

\subsection{Relative hyperbolicity and the Floyd metric}\label{SectionrelhypandFloyd}
Let $\Gamma$ be a finitely generated group. The action of $\Gamma$ on a compact Hausdorff space $T$ is called a convergence action if the induced action on triples of distinct points of $T$ is properly discontinuous. 
Suppose $\Gamma\curvearrowright T$ is a convergence action. The set of accumulation points  $\Lambda \Gamma$ of any orbit $\Gamma \cdot x\ (x\in T)$ is called the {\it limit set} of the action. As long as $\Lambda \Gamma$ has more than two points, it is uncountable and the unique minimal closed $\Gamma$-invariant subset of $T$.
The action is then said to be nonelementary. In this case, the orbit of every point in $\Lambda \Gamma$ is infinite. 
The action is {\it minimal} if $\Lambda \Gamma=T$.

{A point $\zeta\in\Lambda \Gamma$  is called {\it conical} if there is a sequence $(g_{n})$ of $\Gamma$ and distinct points $\alpha,\beta \in \Lambda \Gamma$ such that
$g_{n}\zeta \to \alpha$ and $g_{n}\eta \to \beta$ for all $\eta \in  T \setminus\{\zeta\}.$}
The point $\zeta\in\Lambda \Gamma$ is called bounded parabolic if the stabilizer of $\zeta$ in $\Gamma$ is infinite and acts cocompactly on $\Lambda \Gamma \setminus \{\zeta\}$.
The stabilizers of bounded parabolic points are called (maximal) parabolic subgroups.
The convergence action $\Gamma \curvearrowright  T$ is called geometrically finite if every point of $\Lambda \Gamma \subset T$ is either conical or bounded parabolic. Yaman \cite{Yaman} proved that if $\Gamma \curvearrowright T$ is a minimal geometrically finite action, there is a proper Gromov hyperbolic space $X$ on which $\Gamma$ acts by isometries and a $\Gamma$-equivariant homeomorphism $T \to \partial X$.

Suppose now $\Omega$ is a collection of subgroups of $\Gamma$. We say $\Gamma$ is hyperbolic relative to $\Omega$ if there exists some compactum $T$ on which $\Gamma$ acts minimally and geometrically finitely and the maximal parabolic subgroups are the elements of $\Omega$.
Such a compactum is then unique up to $\Gamma$-equivariant homeomorphism \cite{Bowditch} and is called the Bowditch boundary of $(\Gamma, \Omega)$. 
The group $\Gamma$ is said to be relatively hyperbolic if it is hyperbolic relative to some collection of subgroups, or equivalently if it admits a geometrically finite convergence action on some compactum. The group $\Gamma$ is nonelementary relatively hyperbolic if it admits a nonelementary geometrically finite convergence action on some infinite compactum.

A useful fact is the following. Let $\Gamma$ be a group hyperbolic relative to a collection of parabolic subgroups $\Omega$.
The set $\Omega$ is invariant under conjugacy, since the set of parabolic limit points is invariant under translation.
There is a finite number of conjugacy classes of elements of $\Gamma$ (see \cite[Theorem~1B]{Tukia}).

Let $f:\mathbb{R}^{+}\to \mathbb{R}^{+}$ be a function satisfying two conditions: $\sum_{n\geqslant0}f_n<\infty;$ and there exists a  $\lambda\in (0,1)$ such that $1\geqslant f_{n+1}/f_n\geqslant\lambda$ for all $n{\in}\mathbb{N}$. The function $f$ is called the {\it rescaling function.}

Pick a basepoint $o\in \Gamma$ and rescale $C_{S}\Gamma$ by declaring the length of an edge $\sigma$ to be $f(d(o,\sigma))$. The induced shortpath metric on $C_{S}\Gamma$  is called the {\it Floyd metric} with respect to the basepoint $o$ and Floyd function $f$ and denoted by $\delta^{f}_{o}(.,.)$.
Its Cauchy completion (whose topology does not depend on the basepoint) is called the Floyd compactification $\overline{\Gamma}_{f}$ and $\partial_{f}\Gamma= \overline{\Gamma}_{f} \setminus \Gamma$ is called the Floyd boundary.
Karlsson showed that the action of a group on its Floyd boundary is always convergence \cite[Theorem~2]{Karlsson}.
On the other hand, if $\Gamma$ is relatively hyperbolic and if the Floyd function $f$ is not decreasing exponentially too fast, Gerasimov \cite[Map theorem]{Gerasimov} proved that there is continuous $\Gamma$-equivariant surjection ({\it Floyd map}) from the Floyd boundary to the Bowditch boundary. Furthermore,  Gerasimov and Potyagailo \cite[Theorem~A]{GerasimovPotyagailo} proved that the preimage  of any conical point by this map is a singleton and the preimage of a parabolic fixed point $p$ is the limit set for the action of its stabilizer $\Gamma_p$  on  $\partial_f\Gamma$. In particular if $\Gamma_p$ is an amenable non-virtually cyclic group then its limit set on the Floyd boundary is a point. Consequently, when $\Gamma$ is hyperbolic relative to a collection of infinite amenable subgroups which are not virtually cyclic,  the Floyd boundary is homeomorphic to the Bowditch boundary.

\medskip
If $\alpha$ is a (finite or infinite) geodesic in $C_{S}\Gamma$ for the Cayley metric, a point $p\in \alpha$ is said to be {\it $(\epsilon,R)$-deep} if there is a $\gamma \in \Gamma$, $P\in \Omega$ such that the length $R$-segment of $\alpha$ around $p$
is contained in the $\epsilon$-neighborhood of $\gamma P$. Otherwise, $p\in \alpha$ is called an $(\epsilon,R)$-transition point of $\alpha$.
Gerasimov and Potyagailo prove the following key property.
\begin{proposition}\label{Floydgeo}
\cite[Corollary 5.10]{GePoCrelle}
For each $\epsilon>0$, $R>0$ and $D>0$ there is a number $\delta>0$ such that
if $y$ is within word distance $D$ of an $(\epsilon,R)$-transition point of a word geodesic from $x$ to $z$ then $\delta^{f}_{y}(x,z)>\delta$.
\end{proposition}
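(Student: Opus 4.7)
The plan is to argue by contradiction, using the Floyd compactification together with the Floyd map from \cite{Gerasimov} and the identification of its fibres from \cite{GerasimovPotyagailo}. Suppose the statement fails for some $\epsilon, R, D$: then there exist sequences $(x_n), (y_n), (z_n)$ in $\Gamma$ and $(\epsilon,R)$-transition points $p_n$ on word geodesics $\gamma_n$ from $x_n$ to $z_n$, with $d(y_n, p_n) \leq D$, such that $\delta^f_{y_n}(x_n, z_n) \to 0$.

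By equivariance of the Floyd metric under simultaneous left translation of the basepoint and the two arguments, translate by $y_n^{-1}$ to reduce to $y_n = e$. Then each $p_n$ lies in the finite ball of radius $D$ around $e$, so after passing to a subsequence we may assume $p_n = p^{*}$ is constant. The Floyd compactification being compact, pass to further subsequences so that $x_n$ and $z_n$ converge in $\overline{\Gamma}_{f}$; since $\delta^f_e(x_n, z_n) \to 0$ and both sequences must be unbounded (else $\gamma_n$ is eventually trivial and has no transition point), they share a common limit $\xi \in \partial_{f}\Gamma$.

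Apply the Floyd map to obtain $\zeta \in T$ in the Bowditch boundary, and split into two cases. If $\zeta$ is a conical limit point, the Gerasimov--Potyagailo result says that the fibre of the Floyd map over $\zeta$ is a singleton, so $x_n$ and $z_n$ both converge to $\zeta$ in $T$. A standard convergence-group dynamics argument at a conical point then forces the word geodesics $\gamma_n$ to eventually leave every bounded subset of $\Gamma$, contradicting the fact that $p^{*} \in \gamma_n$ lies in a bounded ball around $e$. If $\zeta$ is a bounded parabolic point with stabilizer $P$, let $\gamma_0 P$ be the (unique) coset fixing $\zeta$. Then $x_n$ and $z_n$ enter arbitrarily deeply into $\gamma_0 P$, and by the bounded-coset-penetration structure of relatively hyperbolic groups, apart from uniformly bounded end-segments near $x_n$ and $z_n$, the geodesic $\gamma_n$ lies in the $\epsilon$-neighborhood of $\gamma_0 P$. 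Consequently any transition point of $\gamma_n$ must lie in one of these end-segments; for $p^{*}$ to be such a point one needs $\gamma_0 P$ to meet a uniformly bounded neighborhood of $e$, which restricts $\gamma_0$ to finitely many cosets. Fixing $\gamma_0$, the assumption that $p^{*}$ is $(\epsilon,R)$-transition (not deep in $\gamma_0 P$) contradicts the fact that for large $n$ the geodesic $\gamma_n$ stays in the $\epsilon$-neighborhood of $\gamma_0 P$ on a long segment around $p^{*}$.

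The hard part will be the parabolic case, where one has to upgrade the metric information $\delta^f_e(x_n, z_n) \to 0$ into precise coarse-geometric information about the location of $\gamma_n$ relative to $\gamma_0 P$ near $e$. This upgrade is exactly what is mediated by the Floyd map theorem and the identification of its fibres: without these tools, a direct attempt to extract a shortcut from small Floyd distance would founder on the fact that paths of small Floyd length need not fellow-travel any word geodesic from $x_n$ to $z_n$.
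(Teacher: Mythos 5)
First, a point of comparison: the paper does not actually prove this proposition --- it is quoted verbatim from \cite[Corollary 5.10]{GePoCrelle} --- so your proposal is competing with the full argument in that reference, not with anything in this paper.

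Your skeleton (argue by contradiction, translate so $y_n=e$, extract a constant transition point $p^{*}$ and a common Floyd limit $\xi$ of $x_n$ and $z_n$, push forward by the Floyd map, and split into conical/parabolic cases) is the natural one, but in both cases the step that carries the geometric content is either asserted without proof or false as stated, and the assertions are essentially equivalent to the statement being proved, so the argument is circular where it is not wrong. In the conical case, ``a standard convergence-group dynamics argument forces the word geodesics to leave every bounded set'' is not standard: the Cayley graph is not hyperbolic, and the fact that word geodesics joining two sequences converging to the same conical point of the Bowditch (or Floyd) boundary escape every compact set is precisely the sort of visibility/geodesic-tracking statement that \cite{GePoCrelle} has to establish; Karlsson's lemma \cite{Karlsson} gives only the opposite implication (a lower bound on $\delta^f_o$ forces the geodesic to meet a fixed finite set). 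To close this case you would at least need to extract a bi-infinite limit geodesic through $p^{*}$ by Arzel\`a--Ascoli and show that its two ends define \emph{distinct} Floyd limit points while both are forced to equal $\xi$; neither half is supplied. In the parabolic case two claims are incorrect: (i) $x_n,z_n\to\zeta$ parabolic does not imply that $x_n,z_n$ enter deeply into $\gamma_0P$ --- they can converge to $\zeta$ while $d(x_n,\gamma_0P)\to\infty$; and (ii) BCP does not say that $\gamma_n$ lies in the $\epsilon$-neighborhood of $\gamma_0P$ outside uniformly bounded end-segments --- the pieces of $\gamma_n$ between $x_n$ (resp.\ $z_n$) and its entry (resp.\ exit) point of the coset have length comparable to $d(x_n,\gamma_0P)$ and $d(z_n,\gamma_0P)$, which need not be bounded. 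Moreover, your final contradiction requires the window of length $R$ around $p^{*}$ to lie in the $\epsilon$-neighborhood of $\gamma_0P$ for the \emph{given} $\epsilon$, which may be far smaller than any quasiconvexity or BCP constant, so even a corrected coset-penetration argument would only contradict $(\epsilon',R)$-transitionality for some fixed $\epsilon'$, not the hypothesis. As it stands the proposal is a plausible outline with the hard steps missing; the honest course is to cite \cite[Corollary 5.10]{GePoCrelle}, as the paper does.
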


\subsection{Other viewpoints on relative hyperbolicity}\label{Sectionotherviewpointsonrelhyp}
We defined relative hyperbolicity in terms of convergence actions.
There are two useful equivalent definitions.
The first one is stated in terms of actions on hyperbolic spaces.
\begin{definition}\cite{Bowditch}
A group $\Gamma$ is hyperbolic relative to a collection of subgroups $\Omega$ if it acts by isometries and properly discontinuously on a path-metric proper hyperbolic space $X$ such that the action on $\partial X$ is geometrically finite and the maximal parabolic subgroups are exactly the elements of $\Omega$.
\end{definition}

As stated above, Yaman proved that this notion of relative hyperbolicity coincides with the one we gave in terms of convergence actions.

Relative hyperbolic groups were first introduced by Gromov in \cite{Gromov} as generalizations of finite covolume Kleinian groups with parabolic elements.
The space $X$ plays the role of the hyperbolic space $\mathcal{H}^n$, although it is not $CAT(0)$ in general.
Actually, a large part of the intuition for relative hyperbolic groups comes from this action on a hyperbolic space $X$ and the analogy with Kleinian groups acting on $\mathcal{H}^n$.
During the proofs, it will be more convenient to use the Floyd metric, but one could recover our results using the hyperbolicity of $X$.

The second definition of relative hyperbolicity is based on the geometry of the Cayley graph.
It was introduced by Farb in \cite{Farb}.
We will not use this point of view during the proofs and so we do not include this definition here.
However, we will use Farb's point of view in the construction of a $Z$-boundary in the appendix. We will give more details there.


\subsection{Geometric compactifications}\label{Sectioncompactification}
We now give a precise definition of a $Z$-boundary.
We first define the geometric boundary of an infinite, virtually abelian, finitely generated group.
Let $P$ be such a subgroup, so that there exists a subgroup of $P$ isomorphic to $\Z^k$, for some $k\geq 1$, with finite index in $P$.
Then, any section $P/\Z^k\rightarrow P$ provides an identification between $P$ and $\Z^k\times \{1,...,N\}$ for some $N\geq 1$.
Let $(g_n)$ be a sequence in $P$ and identify $g_n$ with $(z_n,j_n)\in \Z^k\times \{1,...,N\}$.
Say that the sequence $(g_n)$ converges to a point in the boundary of $P$ if $z_n$ tends to infinity and $\frac{z_n}{\|z_n\|}$ converges to some point in the sphere $\Ss^{k-1}$.
This defines what we call the geometric boundary $\partial P$ of $P$.
This construction coincides with the $CAT(0)$ boundary of $P$: the visual boundary of a $CAT(0)$ space $\mathbb{R}^{k}$ on which $P$ acts properly and cocompactly.

More generally, if $F$ is a finite set, we define the geometric boundary of the product $P\times F$ as follows.
First identify $P$ with $\Z^k\times  \{1,...,N\}$ as before. This provides an identification between $P\times F$ and $\Z^k\times  \{1,...,N'\}$ for some other integer $N'\geq 1$.
As above, a sequence $(g_n)$ in $P\times F$ is said to converge in the geometric boundary if its projection onto $\Z^k$ under this identification converges in the $CAT(0)$ boundary of $\Z^k$.
This slight generalization will be useful in the following.
Indeed, for technical reasons, when studying sequences converging in $\partial P$, we will not restrict the random walk to parabolic subgroups but to bounded neighborhoods of them.

\medskip
Suppose now that $\Gamma$ is hyperbolic relative to a collection $\Omega$ of infinite subgroups, each of which is virtually abelian.
For $A\subset \Gamma$ and $g\in \Gamma$, let
$$\mathrm{proj}_{A}(g)=\{h\in A: d(h,g)=d(A,g)\}$$ be the set of closest point projections of $g$ to $A$. 

For another subset $F\subset \Gamma$ let $$\mathrm{proj}_{A}(F)=\cup_{g\in F} \mathrm{proj}_{A}(g).$$
Let $\pi_{A}:\Gamma \to A$ be a function with $\pi_{A}(g)\in \mathrm{proj}_{A}(g)$.

If $P$ is a coset of a parabolic subgroup of $\Gamma$, the diameter of $\mathrm{proj}_{A}(g)$ is finite and bounded independently of $g\in \Gamma$ \cite[Lemma 5.1]{Bowditch}.
In particular, if $(g_n)$ is a sequence in $\Gamma$, the convergence of $\pi_{P}(g_n)$  to the geometric boundary of $P$ does not depend on the choice of $\pi_{P}$.

We will use boundaries throughout the paper. We fix the following terminology.
A \textit{compactification} $\overline{\Gamma}$ of $\Gamma$ is a metrizable compact space, containing $\Gamma$ as an open and dense space, endowed with a group action $\Gamma\curvearrowright \overline{\Gamma}$ that extends the action by left multiplication on $\Gamma$. Then, $\partial \Gamma:=\overline{\Gamma}\setminus \Gamma$ is called a boundary of $\Gamma$.

\begin{definition}\label{Zbdry}
Let $\Gamma$ be a group hyperbolic relative to a collection $\Omega$ of virtually abelian subgroups.
Fix a finite full set of representatives of parabolic subgroups $\Omega_0\subset \Omega$ for the action $\Gamma \curvearrowright \Omega$ by conjugacy.

A $Z$-boundary of $(\Gamma,\Omega)$ is a boundary $\partial \Gamma$ such that the following holds.
\begin{itemize}
\item The identity on $\Gamma$ extends to a continuous equivariant surjective map
$$\Gamma \cup \partial \Gamma \rightarrow \Gamma \cup \partial_B\Gamma,$$
where $\partial_B\Gamma$ is the Bowditch boundary.
\item A sequence $(g_n)$ in $\Gamma$ converges to a point in $\partial \Gamma$ if and only if $(g_n)$ tends to infinity and either $(g_n)$ converges to a conical point in the Bowditch boundary or there exist $g\in \Gamma$ and a parabolic subgroup $P\in \Omega_0$ such that $g^{-1}\pi_{gP}(g_n)$ converges in the geometric boundary of $P$.
\end{itemize}
In other words, a sequence converges in the $Z$-boundary if it converges to a conical limit point or if its projection converges in the geometric boundary of a coset of a parabolic subgroup.
\end{definition}

The term $Z$-boundary was coined by Bestvina \cite{Bestvina} in a more general context. In the context of relatively hyperbolic groups with virtually abelian parabolic subgroups, Dahmani \cite{Dahmani} gave an equivalent explicit construction. In the appendix we show that Dahmani's construction is equivalent to Definition \ref{Zbdry}, justifying our use of the term. 
\begin{lemma}\label{equivalentZbdry}
Let $\Gamma$ be a group hyperbolic relative to a collection $\Omega$ of infinite virtually abelian subgroups.
Up to equivariant homeomorphism, the $Z$-boundary does not depend on the set of  coset representatives.
\end{lemma}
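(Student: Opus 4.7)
The plan is to construct an equivariant homeomorphism between any two $Z$-boundaries $\overline{\Gamma}_1,\overline{\Gamma}_2$ corresponding to different systems of representatives $\Omega_0,\Omega_0'$, by extending the identity on $\Gamma$. Since both are metrizable compactifications of $\Gamma$, the task reduces to checking that the convergence criteria of Definition~\ref{Zbdry} single out the same convergent sequences in $\Gamma$, with limits matched compatibly across fibers of the projection to the Bowditch boundary.

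I would begin with bookkeeping: each $P\in\Omega_0$ has a unique conjugate $P'\in\Omega_0'$, and I fix $\alpha_P\in\Gamma$ with $P'=\alpha_P P\alpha_P^{-1}$. The conjugation $q\mapsto\alpha_P q\alpha_P^{-1}$ is an isomorphism $P\to P'$ and induces a canonical homeomorphism $\partial P\to\partial P'$ of geometric boundaries. The element $\alpha_P$ is unique only modulo right multiplication by $P$, but since the translation subgroup $\Z^k\leq P$ acts trivially on the $CAT(0)$-boundary $\Ss^{k-1}$, the induced map on boundaries is well-defined up to the natural finite action of $P$, which is all that is needed.

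The heart of the argument is comparing convergence. Setting $g'=g\alpha_P^{-1}$, the cosets satisfy $g'P'=gP\cdot\alpha_P^{-1}$, i.e.\ they differ by right multiplication by a fixed group element. Right multiplication by $\alpha_P^{-1}$ is a $(1,2|\alpha_P|)$-quasi-isometry of the word metric. Combined with the uniform diameter bound on closest-point projections to parabolic cosets \cite[Lemma~5.1]{Bowditch}, this forces $\pi_{g'P'}(g_n)$ to lie within bounded distance of $\pi_{gP}(g_n)\cdot\alpha_P^{-1}$, the bound depending only on $\alpha_P$. After left multiplication by $(g')^{-1}$, this translates to: $(g')^{-1}\pi_{g'P'}(g_n)\in P'$ differs from $\alpha_P\bigl(g^{-1}\pi_{gP}(g_n)\bigr)\alpha_P^{-1}\in P'$ by right multiplication by a bounded element of $P'$. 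Since convergence in $\partial P'$ is determined entirely by the asymptotic direction of the $\Z^k$-component of the sequence and is insensitive to bounded perturbations, the parabolic convergence condition of Definition~\ref{Zbdry} for $\Omega_0$ is equivalent to the one for $\Omega_0'$, with limits corresponding under the canonical $\partial P\to\partial P'$. The conical case is immediate, as that condition does not involve $\Omega_0$.

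From here, the identity on $\Gamma$ extends uniquely to a continuous bijection $\Phi:\overline{\Gamma}_1\to\overline{\Gamma}_2$ which is the identity on conical limit points and the canonical fiber map $\partial P\to\partial P'$ on each parabolic fiber; equivariance is automatic since both boundary actions extend left multiplication on $\Gamma$, and compactness upgrades the continuous bijection to a homeomorphism. The main technical obstacle is the projection comparison: because right multiplication by $\alpha_P$ is not an isometry of the word metric, one must carefully combine the quasi-isometry estimate with the bounded-diameter property of parabolic projections to keep the discrepancy independent of $n$. A secondary subtlety is the well-definedness of the fiber maps under the $P$-ambiguity in $\alpha_P$, which is absorbed by the trivial action of the translation subgroup on the boundary sphere.
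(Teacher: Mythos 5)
Your argument is correct and follows essentially the same route as the paper's proof: both reduce the statement to the observation that the cosets $gP$ and $gP\alpha_P^{-1}$ are at bounded Hausdorff distance, so their closest-point projections of $(g_n)$ stay boundedly close and the two convergence criteria of Definition~\ref{Zbdry} select the same sequences with matching limits, after which density of $\Gamma$ and compactness upgrade the identity to an equivariant homeomorphism. Your extra bookkeeping with the conjugators $\alpha_P$ and the paper's explicit diagonal argument for continuity are the only (inessential) differences in presentation.
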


\begin{proof}
Let $\Omega_1$ and $\Omega_2$ be two sets of representatives of conjugacy classes of elements of $\Omega$.
Let $\partial_1\Gamma$ and $\partial_2\Gamma$ be two $Z$-boundaries constructed with $\Omega_1$ and $\Omega_2$ respectively.
If $\gamma\in \partial_1 \Gamma$, there exists a sequence $(g_n)$ in $\Gamma$ that converges to $\gamma$.
If $\gamma$ is mapped to a conical limit point $\alpha$ in the Bowditch boundary, then $(g_n)$ converges in $\alpha$ in the Bowditch compactification.
Thus, it converges to a uniquely defined point $\tilde{\gamma}$ in $\partial_2\Gamma$.

Assume now that $\gamma$ is mapped to a parabolic limit point $\alpha$ in the Bowditch boundary.
Then, $(g_n)$ cannot converges to a conical limit point, so there exists a coset $gP$ of a parabolic subgroup $P\in \Omega_1$, such that the projection $(\pi_{gP}g_n)$ converges in the geometric boundary of $gP$.
There is a unique $h\in\Gamma$ such that $Q=hPh^{-1}\in \Omega_2$.
The points $\pi_{gP}g_n$ lie a bounded distance away from $\pi_{gh^{-1}Q}g_n$, so that $(\pi_{gh^{-1}Q}g_n)$ converges in the geometric boundary of $gh^{-1}Q$.
Thus, there exists a uniquely defined point $\tilde{\gamma}\in \partial_2\Gamma$ such that $(g_n)$ converges to $\tilde{\gamma}$.

This defines a map $\gamma \in \partial_1\Gamma \mapsto \tilde{\gamma} \in \partial_2\Gamma$.
Similarly, one defines a map $\partial_2\Gamma \rightarrow \partial_1\Gamma$ and by construction, the composition of these maps give the identity on $\partial_1\Gamma$ and $\partial_2\Gamma$ respectively.

These maps are continuous.
Indeed, let $(\gamma_n)$ converges to $\gamma$ in $\partial_1\Gamma$. By compactness, we only have to prove that the sequence $(\tilde{\gamma}_n)$ has a unique limit point, which is the image of $\gamma$.
Let $\tilde{\gamma}$ be a limit point, so that there is a subsequence $(\tilde{\gamma}_{\sigma(n)})$ that converges to $\tilde{\gamma}$.
By density, there exist sequences $(g_{n,m})$ that converge to $\gamma_n$ when $m$ tends to infinity, with $g_{n,m}$ in $\Gamma$.
By construction, $(g_{n,m})$ also converges to $\tilde{\gamma_n}$ in $\partial_2\Gamma$.
Both $\Gamma \cup \partial_1\Gamma$ and $\Gamma \cup \partial_2\Gamma$ are metrizable compact spaces.
Denote by $d_1$ and $d_2$ corresponding distances.
Then, one can choose a sequence $h_n:=g_{\sigma(n),\phi(\sigma(n))}$, with $\phi:\N\rightarrow \N$ increasing, such that $d_1(h_n,\gamma_{\sigma(n)})\leq \frac{1}{n}$ and $d_2(h_n,\tilde{\gamma}_{\sigma(n)})\leq \frac{1}{n}$.
Then, $(h_n)$ converges to $\gamma$ in $\partial_1\Gamma$ and converges to $\tilde{\gamma}$ in $\partial_2\Gamma$.
By construction, this proves that $\tilde{\gamma}$ is uniquely determined and is the image of $\gamma$.
Thus, the map $\partial_1\Gamma \rightarrow \partial_2\Gamma$ is continuous and similarly, the map $\partial_2\Gamma\rightarrow \partial_1\Gamma$ is continuous,
which proves these maps are homeomorphisms.
By construction, they also are $\Gamma$-equivariant. 
\end{proof}

According to this lemma, verifying that the Martin boundary satisfies the conditions of Definition \ref{Zbdry} completely determines it up to $\Gamma$-equivariant homeomorphism.

In \cite{Dahmani},
Dahmani gave a geometric construction of a $Z$-boundary for any $\Gamma$ which is hyperbolic relative to virtually abelian subgroups \cite[Theorem 3.1]{Dahmani}. We refer to the appendix for more details.
When $\Gamma$ is a geometrically finite subgroup of isometries of real hyperbolic space $\mathcal{H}^{n}$, Bestvina \cite{Bestvina} noted that $\partial \Gamma$ coincides with the $CAT(0)$ boundary of the space obtained by removing from $\mathcal{H}^{n}$ a $\Gamma$-equivariant family of open horoballs based
at parabolic points of $\partial{\mathcal{H}^{n}}$.






\section{Topology of Martin boundaries}\label{SectionBackgroundMartinboundaries}

\subsection{Ancona's inequality for relatively hyperbolic groups}\label{SectionAnconainequalities}

Suppose $\Gamma$ is a finitely generated group.
Let $\mu$ be a probability measure whose finite support generates $\Gamma$ as a semigroup and let $G$ be the associated Green's function.

Denote by $G(x,z;B^c_R(y))$ the Green's function from $x$ to $z$ conditioned by not visiting the ball of center $y$ and radius $R$, that is
$$G(x,z;B^c_R(y))=\sum_{k\geq 0}\mathbb{P}_x(X_k=z|\forall l\in \{1,...,k-1\}, X_l\notin B_R(y)).$$
When $\Gamma$ is hyperbolic, Ancona \cite{Ancona} proved the following.
\begin{theorem}\label{Ancona-Floyd0}\cite{Ancona}
For every $\epsilon>0$ there is a $R>0$ such that whenever $x,y,z\in \Gamma$ lie along a word geodesic, in that order,
we have $$G(x,z;B^{c}_{R}(y))\leq \epsilon G(x,z).$$ 
\end{theorem}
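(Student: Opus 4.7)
The plan is to exploit $\delta$-hyperbolicity of the Cayley graph to force random walk trajectories from $x$ to $z$ to concentrate near any transition point $y$ on a word geodesic, and to amplify this localization via iteration along the geodesic. Via the strong Markov decomposition
$$G(x,z) = G(x,z;B_R^c(y)) + \sum_{w\in B_R(y)} F(x,w;B_R^c(y))\, G(w,z),$$
where $F(x,w;B_R^c(y))$ is the probability that the walk from $x$ first enters $B_R(y)$ at $w$, the theorem becomes equivalent to the assertion that trajectories contributing to $G(x,z)$ typically visit $B_R(y)$.

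First I would establish the easy lower bound $G(x,z)\ge c\,G(x,y)\,G(y,z)$: applying the strong Markov property at the first hit of $y$ gives $G(x,z)\ge F(x,y)\,G(y,z)=G(x,y)\,G(y,z)/G(y,y)$, and $G(y,y)$ is a universal constant by transience. The heart of the argument is then the matching upper bound
$$G(x,z;B_R^c(y))\le \epsilon(R)\,G(x,y)\,G(y,z),\qquad \epsilon(R)\to 0,$$
to be obtained from iterating a single-scale contraction: there should exist $R_0$ and $\lambda_0<1$ such that for every transition point $y'$ on a geodesic between $x$ and $z$ with $d(x,y'),\,d(y',z)\ge R_0$, one has $G(x,z;B_{R_0}^c(y'))\le \lambda_0\, G(x,z)$. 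Choosing a chain of points $y_1,\dots,y_N$ spaced at distance $R_0$ along a geodesic through $y$, any path avoiding $B_R(y)$ (with $R=NR_0$) also avoids every $B_{R_0}(y_i)$; applying the single-scale bound successively at each $y_i$ and using the strong Markov property to re-initialize after each bottleneck yields an $N$-fold contraction giving $\epsilon(R)=\lambda_0^N$.

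The main obstacle is establishing the single-scale contraction $\lambda_0<1$ uniformly in $x,z$ and the scale of iteration. This is where $\delta$-hyperbolicity is decisive: by $\delta$-slim triangles, any path from $x$ to $z$ avoiding $B_{R_0}(y')$ must enter and exit the complement of $B_{R_0}(y')$ through uniformly finite \textit{shadows} attached near the two points where the geodesic crosses the sphere $\partial B_{R_0}(y')$. Combining this shadowing statement with a Harnack inequality (valid because $\mu$ is finitely supported, so Green's function values at neighbors differ by a uniform multiplicative constant), one can compare the detour contribution to $G(x,y')\,G(y',z)/G(y',y')\asymp G(x,z)$; the comparison produces a factor strictly less than one because the shadow has bounded diameter independent of the scale. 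The analogous argument fails for non-hyperbolic groups such as $\mathbb{Z}^d$, where shadows are not uniformly finite and exponentially many equally efficient detours coexist, which is exactly the reason the Ancona inequality needs to be modified (via the Floyd distance, as in \cite{GGPY}) before it can be extended to the relatively hyperbolic setting.
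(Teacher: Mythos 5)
The paper does not prove this statement; it is quoted from Ancona, so your proposal has to be measured against Ancona's original argument and its modern renditions. Your overall architecture --- the first-entrance decomposition at $B_R(y)$, the lower bound $G(x,z)\ge G(x,y)G(y,z)/G(y,y)$ via the first hit of $y$, and the reduction to a contraction estimate for the restricted Green's function --- matches the standard setup. The gap is in the step you yourself flag as the main obstacle: the single-scale contraction. You justify it by claiming that, by $\delta$-slimness, any path from $x$ to $z$ avoiding $B_{R_0}(y')$ must enter and exit through uniformly finite ``shadows'' near the two points where the geodesic meets $\partial B_{R_0}(y')$. This is not true: nothing confines an avoiding trajectory to a bounded region; it may circumvent $B_{R_0}(y')$ at any distance $\rho\ge R_0$ from $y'$ (think of a cocompact Fuchsian group). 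What hyperbolicity actually gives is that such a detour through a point $w$ with $d(w,[x,z])=\rho$ has total length at least $d(x,z)+2\rho-O(\delta)$, so its Green's-function weight is at most $Ce^{-c(d(x,z)+2\rho)}$ by nonamenability (Kesten). The genuine difficulty --- the one Ancona's proof is built to overcome --- is that this must be compared with the lower bound $G(x,z)\ge e^{-c'\,d(x,z)}$, where $c'$ is in general much larger than $c$; a direct summation over detour points is therefore useless once $d(x,z)$ is large, and no fixed $R_0$ yields a $\lambda_0<1$ uniform in $d(x,z)$ by this route. Ancona resolves this with an inductive bootstrap over dyadic scales: the multiplicative inequality $G(x,z)\le C\,G(x,y)G(y,z)$, assumed at smaller scales, is used to replace the crude exponential bound on $G(x,w)G(w,z)$ by a quantity comparable to $G(x,z)e^{-c\rho}$, and the resulting corrections are summable over scales. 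Nothing in your proposal plays this role, and without it the heart of the theorem is missing.

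A secondary gap: even granting the single-scale contraction $G(x,z;B_{R_0}^c(y_i))\le\lambda_0\,G(x,z)$ for each $y_i$, the bound $G\bigl(x,z;\bigl(\bigcup_iB_{R_0}(y_i)\bigr)^c\bigr)\le\lambda_0^N\,G(x,z)$ does not follow by ``applying the bound successively'': forbidding several balls at once is not a product of independent conditionings, since the restricted Green's function does not factor through the individual restrictions. Making such an iteration rigorous requires a barrier decomposition --- splitting each avoiding trajectory at separating sets between consecutive $y_i$ and controlling each piece --- which is precisely the content of the strengthened (superexponential) Ancona inequalities proved by Gou\"ezel--Lalley and in \cite{GGPY}; those arguments are again powered by the multiplicative inequality, not the other way around. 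For the statement at hand the iteration is not even needed: once the multiplicative inequality is established with a constant independent of $R$, the $\epsilon$--$R$ form follows by a single decomposition at $B_R(y)$ together with the exponential cost of large excursions.
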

In other words, if $y$ is on a word geodesic connecting $x$ and $z$, a random path between
$x$ and $z$ passes in a bounded neighborhood of $y$ with high probability.
Ancona used this to identify the Martin boundary of hyperbolic groups with their Gromov boundary.
To do the same for relatively hyperbolic groups, we will need the following result of
Gekhtman, Gerasimov, Potyagailo, and Yang.
Let $f$ be a Floyd function.
\begin{theorem}\label{Ancona-Floyd}\cite[Theorem 5.2]{GGPY}
For each $\epsilon>0$ and $\delta>0$ there is a $R>0$ such that for all $x,y,w\in \Gamma$ with $\delta^{f}_{w}(x,y)>\delta$ one has
$$G(x,y;B^{c}_R(w))\leq \epsilon G(x,y).$$
\end{theorem}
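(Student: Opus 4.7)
The plan is to adapt Ancona's original bottleneck argument to the relatively hyperbolic setting, using the Floyd metric in place of the word metric to quantify the ``hyperbolic-like'' separation captured by the hypothesis $\delta^{f}_{w}(x,y)>\delta$. The classical hyperbolic proof rests on the fact that any word geodesic midpoint $w$ is a bottleneck, so random paths between $x$ and $y$ pass near $w$ with high probability; here we need a version of this in which the bottleneck property is tied not to word-geodesic position but to the Floyd metric.

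First I would establish a uniform one-shot exit estimate: for any $\delta>0$ there exist $\epsilon_{1}\in (0,1)$ and $R_{1}>0$ such that if $\delta^{f}_{w}(x,y)>\delta$, then a $\mu$-trajectory started at $x$ and conditioned to reach $y$ enters $B_{R_{1}}(w)$ with probability at least $1-\epsilon_{1}$. Heuristically the Floyd metric extends continuously to the Floyd compactification, and a point $w$ at Floyd distance $>\delta$ from both $x$ and $y$ acts as a topological separator there, so a trajectory avoiding a large metric ball about $w$ must take a very long detour in the Floyd geometry, which is quantitatively penalized.

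Next I would upgrade the one-shot estimate to the statement of the theorem by iteration. Using the strong Markov property applied to first entries into concentric Floyd shells around $w$, one obtains an inequality of the form
$$G(x,y;B_{R}^{c}(w))\leq \sum_{u} G(x,u;B_{R}^{c}(w))\,G(u,y),$$
where $u$ ranges over a boundary set at Floyd distance roughly $\delta/2$ from $w$. Harnack-type estimates, available from finite support and irreducibility, allow us to compare $G(u,y)$ with $G(w,y)$ up to a uniform multiplicative constant; the remaining task is to bound the first-passage mass $\sum_{u}G(x,u;B_{R}^{c}(w))$, and applying the one-shot estimate along successive shells forces this mass to decay geometrically as $R\to\infty$.

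The main obstacle, and the point at which relative hyperbolicity (as opposed to ordinary hyperbolicity) really enters, is the justification of the one-shot estimate. In a hyperbolic group, thin triangles alone force trajectories past any geodesic midpoint; in the relatively hyperbolic setting, however, a random walk can traverse an entire parabolic coset without passing near its ``center'', so generic geodesic points are not bottlenecks. The remedy is Proposition~\ref{Floydgeo}: the hypothesis $\delta^{f}_{w}(x,y)>\delta$ locates $w$ within bounded word distance of an $(\epsilon,R)$-transition point of some word geodesic from $x$ to $y$, and near such transition points the geometry is quasi-tree-like in exactly the sense needed for Ancona's slim-corridor argument to run. Combining this quasi-hyperbolic picture at transition points with the Gerasimov--Potyagailo control of harmonic measure in the Floyd compactification yields the exit-probability estimate, and the theorem follows by choosing $R$ large enough to make the iterated bound smaller than the prescribed $\epsilon$.
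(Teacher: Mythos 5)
The paper does not prove this statement: it is quoted verbatim from \cite[Theorem~5.2]{GGPY}, so the only ``proof'' in the paper is the citation. Measured against any complete argument, your sketch has a genuine gap at its core. The ``one-shot exit estimate'' is not a preliminary lemma --- it is already the full theorem with some fixed $\epsilon_{1}<1$ --- and the justification you offer (``a trajectory avoiding $B_{R}(w)$ must take a very long detour in the Floyd geometry, which is quantitatively penalized'') does not survive quantification. A path avoiding $B_{R}(w)$ need not have small Floyd length seen from $w$: it can oscillate radially across the annuli around $w$ and accumulate arbitrarily much Floyd length while never entering the ball, so avoidance of $B_R(w)$ does not by itself force a contradiction with $\delta^{f}_{w}(x,y)>\delta$. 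And even a genuinely long detour is not automatically penalized relative to $G(x,y)$: the only general tool, Kesten's bound $\mu^{*n}(g)\leq \mathrm{e}^{-\alpha n}$, beats $G(x,y)$ only if the detour length exceeds a definite multiple of $d(x,y)$, which is precisely what cannot be asserted here. The iteration step then compounds the problem: ``concentric Floyd shells around $w$'' at Floyd distance roughly $\delta/2$ are level sets of a metric whose total diameter is at most $2\sum_{n}f_{n}$, so there are only boundedly many such shells, independently of $R$. Iterating a fixed deficiency factor over a bounded number of shells yields a constant strictly less than $1$ but bounded away from $0$; it cannot produce a bound tending to $0$ as $R\to\infty$, which is what the theorem asserts. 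The sets of points at a given Floyd distance are moreover infinite in the word metric, so the Harnack comparison $G(u,y)\asymp G(w,y)$ you invoke over them is not available.

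What is missing is the actual inductive mechanism of an Ancona-type argument: a chain of separating sets whose \emph{number grows with $R$} (scales in the word metric, not in the Floyd metric), each of which every $B_{R}(w)$-avoiding path from $x$ to $y$ must cross, combined with (i) Harnack inequalities at the finitely many crossing points of each barrier, and (ii) a first/last-visit decomposition plus superharmonicity of $G(\cdot,y)$ to turn the crossing structure into a recursion that loses a uniform factor $\theta<1$ at every scale, giving $G(x,y;B^{c}_{R}(w))\leq C\theta^{m(R)}G(x,y)$ with $m(R)\to\infty$. Your appeal to Proposition~\ref{Floydgeo} also runs in the wrong direction: that proposition converts proximity to a transition point into a Floyd lower bound, whereas you would need the converse (that $\delta^{f}_{w}(x,y)>\delta$ forces $w$ to lie within bounded distance of a transition point of every geodesic from $x$ to $y$ --- a true but different statement of Gerasimov--Potyagailo), and even granting it, locating $w$ near a transition point does not by itself supply the probabilistic deficiency estimate. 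As written, the proposal names the right circle of ideas but does not assemble them into an argument that closes.
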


The analogy with Ancona's estimate along geodesics comes from the following fact.
Recall that if $\alpha$ is a word geodesic in $\Gamma$, a point $p\in \alpha$ is called an $(\epsilon,R)$-transition point if there does \emph{not} exist a coset $P$ of a parabolic subgroup such that the length $R$-segment of $\alpha$ surrounding $p$ is contained in the $\epsilon$-neighborhood of $P$.
\begin{theorem}\label{Ancona1}\cite{GGPY}
Let $\alpha$ be a geodesic in $C_{S}\Gamma$.
Then for any $\epsilon>0$, $r>0$, $D>0$, and $\eta>0$ there is an $R>0$ such that if $x,y,z\in \Gamma$, $\alpha$ is a geodesic between $x$ and $z$, and $y$ is within $D$ of an $(\eta,r)$-transition point of $\alpha$ then
$$G(x,z;B^{c}_R(y))\leq \epsilon G(x,z).$$
\end{theorem}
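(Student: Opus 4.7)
The plan is to derive Theorem~\ref{Ancona1} as an essentially immediate consequence of the Floyd-based Ancona inequality (Theorem~\ref{Ancona-Floyd}) combined with the geometric comparison between transition points on word geodesics and lower bounds on the Floyd distance (Proposition~\ref{Floydgeo}). The two results fit together perfectly: one translates the hypothesis about a transition point into a quantitative Floyd separation, and the other uses that Floyd separation to produce the desired Green-function estimate.

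First I would fix the data $\epsilon, r, D, \eta$ provided by the theorem. Applying Proposition~\ref{Floydgeo} with the roles of its parameters taken to be $\eta$ (in place of its $\epsilon$), $r$ (in place of its $R$) and $D$, I obtain a constant $\delta=\delta(\eta,r,D)>0$ with the following property: whenever $y\in\Gamma$ lies within word distance $D$ of an $(\eta,r)$-transition point of a word geodesic from $x$ to $z$, then $\delta^{f}_{y}(x,z)>\delta$. This reduces the geometric hypothesis of Theorem~\ref{Ancona1} to the analytic hypothesis of Theorem~\ref{Ancona-Floyd}.

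Second, I would feed this $\delta$, together with the original $\epsilon$, into Theorem~\ref{Ancona-Floyd}. That yields a radius $R=R(\epsilon,\delta)=R(\epsilon,\eta,r,D)>0$ such that $G(x,z;B^{c}_{R}(y))\leq \epsilon\, G(x,z)$ whenever $\delta^{f}_{y}(x,z)>\delta$. Combining the two steps finishes the proof: for any triple $x,y,z\in\Gamma$ satisfying the hypothesis of Theorem~\ref{Ancona1}, Proposition~\ref{Floydgeo} ensures $\delta^{f}_{y}(x,z)>\delta$, and Theorem~\ref{Ancona-Floyd} then delivers the desired bound.

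There is no substantive obstacle in this argument itself, since both ingredients are quoted from \cite{GePoCrelle} and \cite{GGPY}. The only point to watch is that the order of quantifiers is respected: $\delta$ must be chosen before $R$, because $R$ in Theorem~\ref{Ancona-Floyd} depends on the Floyd threshold $\delta$. As long as one applies Proposition~\ref{Floydgeo} first and then Theorem~\ref{Ancona-Floyd}, the resulting $R$ depends only on $\epsilon,\eta,r,D$, which is exactly what Theorem~\ref{Ancona1} asserts.
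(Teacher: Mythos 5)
Your proposal is correct and is exactly the argument the paper indicates: the remark immediately following Theorem~\ref{Ancona1} says that in this situation Proposition~\ref{Floydgeo} bounds $\delta^{f}_{y}(x,z)$ from below by a universal constant, after which Theorem~\ref{Ancona-Floyd} applies. Your care with the order of quantifiers (choosing $\delta$ from $\eta,r,D$ first, then $R$ from $\epsilon,\delta$) is the only point of substance, and you handle it correctly.
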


Indeed, in this situation, the Floyd distance $\delta_y^f(x,z)$ is bounded from below by a universal constant, according to Proposition~\ref{Floydgeo}.

Although Theorem \ref{Ancona-Floyd} holds for arbitrary finitely generated groups, its results are vacuous when the Floyd boundary is trivial, and besides virtually cyclic groups, the only known examples with nontrivial Floyd boundary are nonelementary relatively hyperbolic groups.

\subsection{Martin boundaries of thickened abelian groups}\label{Sectionthickenedperipheralgroups}
To understand the behavior of $K_{g_n}(g)$, when $g_n$ converges in the geometric boundary of a parabolic subgroup, we will introduce the transition kernel of the first return to the corresponding subgroup $P$.
We will then get a sub-Markov chain on $P$ and we will show that we can identify this first-return-chain with a $\Z^k$-invariant sub-Markov chain on $\Z^k\times \{1,...,N\}$
(see Lemma~\ref{lemmasubMarkovneighborhoodhorosphere}).
We will then use results for such chains.

In \cite{Dussaule}, the author shows that under some technical assumptions, the Martin boundary of such a chain on $\Z^k\times \{1,...,N\}$ coincides with the geometric boundary.
In this setting, the geometric boundary is defined as follows.
A sequence $(z_n,j_n)$ in $\Z^k\times \{1,...,N\}$ converges to a point in the geometric boundary if $z_n$ tends to infinity and $\frac{z_n}{\|z_n\|}$ converges in the unit sphere $\mathbb{S}^{k-1}$.
We now introduce the assumptions of \cite{Dussaule} and we will later show that they are satisfied in our setting.

Consider a $\Z^k$-invariant chain $p$ on the product space $\Z^k\times \{1,...,N\}$.
For every function defined on $\mathbb{Z}^k\times \{1,...,N\}$, the $\{1,...,N\}$ coordinate will be considered as an index.
For example, the transition kernel will be written as $p_{j_1,j_2}(z_1,z_2)$, its $n$th power of convolution as $p_{j_1,j_2}^{(n)}(z_1,z_2)$, the Green's function as $G_{j_1,j_2}(z_1,z_2)$ and the Martin kernel as $K_{j_1,j_2}(z_1,z_2)$.
We can thus see these functions as the entries of $N\times N$ matrices.
Assume that the chain $p$ is strongly irreducible, that is,
for every $j_1,j_2\in \{1,...,N\}$ and for every $z_1,z_2\in \Z^k$, there exists $n_0$ such that for every $n\geq n_0$, $p^{(n)}_{j_1,j_2}(z_1,z_2)>0$.
As we will see later (see Lemma~\ref{Spitzertrick}), strong irreducibility is not too much to ask and we will be able to reduce our study of irreducible chains to strongly irreducible ones.

In \cite{NeySpitzer}, Ney and Spitzer show that the Martin boundary of a strongly irreducible, finitely supported, noncentered random walk on $\mathbb{Z}^k$ coincides with the $CAT(0)$ boundary.
Their proof is based on the study of minimal harmonic functions which are of the form $z\in \mathbb{Z}^k\mapsto \mathrm{e}^{u\cdot z}$ for some $u\in \mathbb{R}^k$ satisfying the condition
\begin{equation}\label{equationharmonic}
\sum_{z\in \mathbb{Z}^k}p(0,z)\mathrm{e}^{u\cdot z}=1.
\end{equation}
In our setting,
for $u\in \mathbb{R}^k$, we define the $N\times N$ matrix $F(u)$ whose entries are given by
$$F_{j_1,j_2}(u)=\sum_{z\in \mathbb{Z}^k}p_{j_1,j_2}(0,z)\mathrm{e}^{u\cdot z}.$$
The entries of this matrix may be infinite.
We restrict our attention to the set where they are finite and denote this set by $\mathcal{F}_0$:
$$\mathcal{F}_0=\{u\in \mathbb{R}^k, \forall j_1,j_2\in \{1,...,N\},F_{j_1,j_2}(u)<+\infty\}.$$
We also denote by $\mathcal{F}$ the interior of $\mathcal{F}_0$.

\begin{lemma}
For every $u\in \mathcal{F}_0$, the matrix $F(u)$ has non-negative entries. Furthermore, this matrix is strongly irreducible, meaning that there exists $n\geq 0$ such that $F(u)^n$ has positive entries. 
\end{lemma}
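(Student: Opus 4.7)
The non-negativity of the entries is immediate: by hypothesis $p_{j_1,j_2}(0,z)\geq 0$ for every $z\in\Z^k$, and $\mathrm{e}^{u\cdot z}>0$, so $F_{j_1,j_2}(u)\geq 0$ for every $u\in\mathcal{F}_0$ and every pair of indices.

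For strong irreducibility my plan is to reduce it to the strong irreducibility of the underlying chain $p$. First I would compute the entries of $F(u)^n$ directly. Expanding the matrix product and using the $\Z^k$-invariance of $p$, one has
\begin{equation*}
(F(u)^n)_{j_1,j_2}=\sum_{k_1,\dots,k_{n-1}}\sum_{z_1,\dots,z_n\in\Z^k}p_{j_1,k_1}(0,z_1)p_{k_1,k_2}(0,z_2)\cdots p_{k_{n-1},j_2}(0,z_n)\,\mathrm{e}^{u\cdot(z_1+\cdots+z_n)}.
\end{equation*}
Since all summands are non-negative, Fubini's theorem (equivalently, rearrangement of a non-negative series) together with the definition of the convolution power $p^{(n)}_{j_1,j_2}(0,z)$ via $\Z^k$-invariance yields
\begin{equation*}
(F(u)^n)_{j_1,j_2}=\sum_{z\in\Z^k}p^{(n)}_{j_1,j_2}(0,z)\,\mathrm{e}^{u\cdot z}.
\end{equation*}
The finiteness of the left-hand side (guaranteed because $u\in\mathcal{F}_0$ and $F(u)$ has finite entries, hence so does any of its powers) justifies the rearrangement a posteriori.

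Now I invoke the strong irreducibility assumption on $p$. For each pair $(j_1,j_2)\in\{1,\dots,N\}^2$ there is an integer $n_{j_1,j_2}$ such that $p^{(n)}_{j_1,j_2}(0,0)>0$ for all $n\geq n_{j_1,j_2}$. Since the index set $\{1,\dots,N\}^2$ is finite, I may set $n=\max_{j_1,j_2}n_{j_1,j_2}$, a finite integer. Keeping only the summand $z=0$ in the identity above,
\begin{equation*}
(F(u)^n)_{j_1,j_2}\geq p^{(n)}_{j_1,j_2}(0,0)\,\mathrm{e}^{u\cdot 0}=p^{(n)}_{j_1,j_2}(0,0)>0
\end{equation*}
for every pair $(j_1,j_2)$, so $F(u)^n$ has strictly positive entries, which is the claimed strong irreducibility.

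There is no real obstacle here: the entire argument is a bookkeeping computation that converts the matrix $F(u)$ into an exponentially reweighted transition kernel and then transfers the strong irreducibility of $p$ to $F(u)$. The only subtle point to watch is the legitimacy of the rearrangement in the expansion of $(F(u)^n)_{j_1,j_2}$, which is harmless because every term is non-negative and the total sum is finite by definition of $\mathcal{F}_0$.
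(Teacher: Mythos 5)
Your proof is correct and takes essentially the same route as the paper's: the paper likewise records the identity $\bigl(F(u)^n\bigr)_{j_1,j_2}=\sum_{z\in\Z^k}p^{(n)}_{j_1,j_2}(0,z)\,\mathrm{e}^{u\cdot z}$ as a ``direct calculation'' and deduces strong irreducibility of $F(u)$ from that of $p$. You have simply filled in the bookkeeping the paper leaves implicit (the rearrangement of a non-negative series, restriction to the $z=0$ term, and the maximum over the finite index set), all of which is sound.
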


\begin{proof}
Direct calculation shows that the entries of $F(u)^n$ are given by
$$F_{j_1,j_2}(u)^{n}=\sum_{z\in \mathbb{Z}^k}p^{(n)}_{j_1,j_2}(0,z)\mathrm{e}^{u\cdot z}.$$
Strong irreducibility of $F(u)$ is deduced from strong irreducibility of $p$.
\end{proof}

Since $F(u)$ is strongly irreducible, it follows from the Perron-Frobenius theorem (see \cite[Theorem 1.1]{Seneta}) that $F(u)$ has a dominant positive eigenvalue, that is an eigenvalue $\lambda(u)$ which is positive and such that for every other eigenvalue $\lambda\in \mathbb{C}$, $|\lambda|<\lambda(u)$.
Moreover, any eigenvector associated to $\lambda(u)$ has non-zero coordinates and we can assume that every coordinate is positive.
The analog of Equation (\ref{equationharmonic}) will be 
\begin{equation}\label{equationharmonicthickening}
\lambda(u)=1.
\end{equation}

Denote by $D$ the set where $\lambda(u)$ is at most 1:
$D=\{u\in \mathcal{F}, \lambda(u)\leq 1\}.$
The two technical assumptions of \cite{Dussaule} on the chain $p$ are the following.

\begin{assumption}\label{Assumption1}
The set $D$ is compact.
\end{assumption}

\begin{assumption}\label{Assumption2}
The minimum of the function $\lambda$ is strictly smaller than 1.
\end{assumption}

Since $\lambda(u)$ is a dominant eigenvalue, it is analytic in $u$ (see Proposition~8.20 in \cite{Woess}).
For $u\in \mathcal{F}$, denote by $\nabla \lambda(u)$ the gradient of $\lambda$ with respect to $u$.
We have the following (see Lemma~3.13 in \cite{Dussaule}).
\begin{lemma}\label{explicithomeo}
Under Assumptions~\ref{Assumption1} and~\ref{Assumption2}, the set $\{u\in \mathbb{R}^k,\lambda(u)=1\}$ is homeomorphic to $\mathbb{S}^{k-1}$ and
an explicit homeomorphism is given by
$$u\in \{u\in \mathbb{R}^k,\lambda(u)=1\}\mapsto \frac{\nabla \lambda(u)}{\|\nabla \lambda (u)\|}.$$
\end{lemma}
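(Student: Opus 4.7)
The plan is to recognize $D=\{u\in\mathcal{F}:\lambda(u)\le 1\}$ as a convex body in $\R^k$ whose topological boundary is exactly $\{\lambda=1\}$, and to identify $u\mapsto\nabla\lambda(u)/\|\nabla\lambda(u)\|$ with the Gauss (outward unit normal) map of $D$. For a smooth, strictly convex body in $\R^k$, the Gauss map is a homeomorphism onto $\Ss^{k-1}$, which then gives the lemma.

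First I would check that $D$ is a convex body. The entries $F_{j_1,j_2}(u)=\sum_{z}p_{j_1,j_2}(0,z)e^{u\cdot z}$ are nonnegative combinations of exponentials of linear functions of $u$, hence log-convex on $\mathcal{F}$. Since $F(u)$ is strongly irreducible with log-convex entries, Kingman's theorem on the log-convexity of Perron--Frobenius eigenvalues yields that $\log\lambda$ is convex on $\mathcal{F}$, so $D=\{\log\lambda\le 0\}$ is convex. It is compact by Assumption~\ref{Assumption1}, and has non-empty interior by Assumption~\ref{Assumption2} together with the continuity of $\lambda$; hence $D$ is a convex body in $\R^k$. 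Radial projection from any interior point (for instance, from a minimizer of $\lambda$) then gives a homeomorphism $\partial D\to\Ss^{k-1}$, and since $\{\lambda<1\}$ is open one has $\partial D=\{\lambda=1\}$.

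Next I would verify that $\nabla\lambda$ is defined and never vanishes on $\partial D$. Real-analyticity of $\lambda$ is already known (it is the simple dominant eigenvalue of the real-analytic family $F(u)$). If $\nabla\lambda(u^\ast)=0$ for some $u^\ast\in\partial D$, convexity of $\log\lambda$ forces $u^\ast$ to be a global minimizer of $\lambda$, contradicting $\lambda(u^\ast)=1>\min\lambda$ (Assumption~\ref{Assumption2}). By convexity, $\nabla\lambda(u^\ast)$ is then a positive scalar multiple of the outward normal to any supporting hyperplane of $D$ at $u^\ast$, so $N(u):=\nabla\lambda(u)/\|\nabla\lambda(u)\|$ really is the Gauss map of $D$, and it is continuous on $\partial D$.

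The main step, and the one I expect to require the real work, is showing that $N$ is a homeomorphism $\{\lambda=1\}\to\Ss^{k-1}$. Surjectivity is standard: given $v\in\Ss^{k-1}$, the linear functional $u\mapsto v\cdot u$ attains its maximum on the compact convex set $D$ at a boundary point $u^\ast$, and the Lagrange multiplier / supporting hyperplane principle gives that $\nabla\lambda(u^\ast)$ is a positive multiple of $v$. Injectivity, on the other hand, is equivalent to strict convexity of $D$, and this is the delicate point: one needs to rule out flat portions of $\partial D$. I would establish strict convexity from \emph{strict} log-convexity of $\lambda$, which in turn follows from each $F_{j_1,j_2}$ being strictly log-convex whenever its support $\{z:p_{j_1,j_2}(0,z)>0\}$ is not contained in a proper affine subspace of $\R^k$. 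This genuine $k$-dimensionality of the support must be verified at the point of application (and for the walks on thickenings of peripheral $\Z^k$ that arise later in the paper it will follow from the generators of $\Z^k$ belonging to the support). Once strict convexity of $D$ is in hand, $N$ is a continuous bijection between compact Hausdorff spaces, hence a homeomorphism.
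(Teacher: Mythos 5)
Your overall architecture is the right one, and it is essentially the argument behind the cited result (\cite{Dussaule}, Lemma~3.13, following Ney--Spitzer): the paper itself does not reprove this lemma, and the source proof is exactly the convex-analysis argument you describe --- $\log\lambda$ is convex by Kingman's theorem, $D$ is a convex body by Assumptions~\ref{Assumption1} and~\ref{Assumption2}, $\nabla\lambda$ cannot vanish on $\{\lambda=1\}$ since a critical point of a convex function is a global minimum, and the normalized gradient is the Gauss map of $D$, whose bijectivity onto $\Ss^{k-1}$ reduces to strict convexity. Your surjectivity and ``continuous bijection from a compact space'' steps are fine.

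The one soft spot is precisely the point you flag: strict convexity. You propose to deduce it from strict log-convexity of each entry $F_{j_1,j_2}$, requiring each support $\{z: p_{j_1,j_2}(0,z)>0\}$ to affinely span $\R^k$, and you defer this to ``the point of application.'' That is both stronger than what is available (for the first-return kernels arising later there is no reason an individual entry has full-dimensional support) and unnecessary: the lemma is meant to hold under its stated hypotheses, and strong irreducibility --- which is a standing assumption in this section --- already suffices. Indeed, strong irreducibility gives an $n$ such that $p^{(n)}_{1,1}(0,z)>0$ simultaneously for $z\in\{0,e_1,\dots,e_k\}$, so the $n$-step chain has increments affinely spanning $\R^k$; the standard way to convert this into strict convexity of $\log\lambda$ is the identification of $\mathrm{Hess}\,\log\lambda(u)$ with the asymptotic covariance matrix of the Doob-tilted chain (via the Perron eigenvectors of $F(u)$), which is degenerate in a direction $v$ only if all closed loops at a fixed state of a fixed length have the same displacement in the direction $v$ --- impossible by the above. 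As written, your proof establishes the lemma only under an extra hypothesis; replacing your entrywise condition by this eigenvalue-perturbation/covariance argument closes the gap and yields the statement in the generality in which it is used.
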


This homeomorphism provides a homeomorphism $\varphi$ between the geometric boundary of $\Z^k\times \{1,...,N\}$ and $\mathbb{S}^{k-1}$ constructed as follows.
Let $(z_n,j_n)$ be a sequence in $\Z^k\times \{1,...,N\}$ converging to a point $\tilde{z}$ in the geometric boundary $\partial (\Z^d\times \{1,...,N\})$.
Then $z_n$ tends to infinity and $\frac{z_n}{\|z_n\|}$ converges to a point $\theta$ in the unit sphere $\mathbb{S}^{k-1}$.
There exists a unique $u\in \{u\in \mathbb{R}^k,\lambda(u)=1\}$ such that $\theta =  \frac{\nabla \lambda(u)}{\|\nabla \lambda (u)\|}$.
Then, define $\varphi(\tilde{z})=u$.

The Martin boundary is defined up to the choice of a base point. Fix such a base point $(z_0,j_0)\in \Z^k\times \{1,...,N\}$.
Now, we can state that the Martin boundary coincides with the geometric boundary (see Proposition~3.17 in \cite{Dussaule}).

\begin{proposition}\label{latticeMartinboundary}
Let $p$ be a strongly irreducible transition kernel on $\Z^k\times \{1,...,N\}$ which is $\Z^k$-invariant and satisfies Assumptions~\ref{Assumption1} and~\ref{Assumption2}.
If $z_n\in \Z^k$ converges to $\tilde{z}\in \partial\Z^k$, let $u=\varphi(\tilde{z})$.
Then, for every $z\in \Z^k$ and for every $j_1,j_2\in \{1,...,N\}$, there exists $C_{j_1}>0$ which only depends on $j_1$ such that
$K_{j_1,j_2}(z,z_n)$ converges to $C_{j_1}\mathrm{e}^{u\cdot (z-z_0)}$.
\end{proposition}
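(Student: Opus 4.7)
The plan is to follow a Ney--Spitzer style argument, adapted to the matrix-valued setting, via an exponential $h$-transform built from the Perron--Frobenius data of $F(u)$. Given the target direction $\theta\in\Ss^{k-1}$ determined by $\tilde z$, Lemma~\ref{explicithomeo} produces a unique $u\in\mathcal{F}$ with $\lambda(u)=1$ and $\nabla\lambda(u)/\|\nabla\lambda(u)\|=\theta$. By Perron--Frobenius applied to the strongly irreducible matrix $F(u)$, we get positive right and left eigenvectors $r(u),\ell(u)$, normalized so that $\ell(u)^T r(u)=1$, with $F(u)r(u)=r(u)$ and $\ell(u)^T F(u)=\ell(u)^T$.

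First I would introduce the tilted kernel
\[
\hat p_{j_1,j_2}(0,z)\;=\;\frac{r_{j_2}(u)}{r_{j_1}(u)}\,\mathrm{e}^{u\cdot z}\,p_{j_1,j_2}(0,z),
\]
extended to $\Z^k\times\{1,\dots,N\}$ by $\Z^k$-invariance. The eigenvalue equation $F(u)r(u)=r(u)$ says precisely that $\sum_{z,j_2}\hat p_{j_1,j_2}(0,z)=1$, so $\hat p$ is a genuine probability chain on $\Z^k\times\{1,\dots,N\}$. Since $u$ lies in the \emph{interior} $\mathcal{F}$, a small ball around $u$ is still in $\mathcal{F}_0$, and $\hat p$ thus has finite exponential moments in every direction. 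Differentiating $\ell(u)^T F(u)r(u)=\lambda(u)$ and using normalization gives that the mean displacement of $\hat p$ equals $\nabla\lambda(u)\neq0$, i.e.\ $\hat p$ is a non-centered, strongly irreducible random walk whose drift points in the direction $\theta$.

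Next I would apply a local central limit theorem for strongly irreducible, exponentially integrable, non-centered Markov chains on $\Z^k\times\{1,\dots,N\}$ (the matrix-valued analog of the classical local CLT) to obtain
\[
\hat p^{(n)}_{j_1,j_2}(0,z)\;=\;\frac{r_{j_1}(u)\ell_{j_2}(u)}{(2\pi n)^{k/2}\sqrt{\det\Sigma(u)}}\,\exp\!\Bigl(-\tfrac{1}{2n}\bigl\langle\Sigma(u)^{-1}(z-n\nabla\lambda(u)),\,z-n\nabla\lambda(u)\bigr\rangle\Bigr)\bigl(1+o(1)\bigr),
\]
uniformly on compacta in $z/n$, where $\Sigma(u)$ is the covariance matrix of $\hat p$. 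Inverting the tilt yields
\[
p^{(n)}_{j_1,j_2}(0,z)\;=\;\frac{r_{j_1}(u)}{r_{j_2}(u)}\,\mathrm{e}^{-u\cdot z}\,\hat p^{(n)}_{j_1,j_2}(0,z),
\]
and summing this saddle-point expression over $n\in\N$ for $z=z_n$ along the ray of direction $\theta$ — the dominant contribution concentrating near $n\sim\|z_n\|/\|\nabla\lambda(u)\|$ — produces the Green's function asymptotics
\[
G_{j_1,j_2}(0,z_n)\;\sim\;\frac{A(u)\,r_{j_1}(u)\,\ell_{j_2}(u)}{\|z_n\|^{(k-1)/2}}\,\mathrm{e}^{-u\cdot z_n},
\]
with $A(u)>0$ depending only on $u$. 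Forming the ratio $K_{j_1,j_2}(z,z_n)=G_{j_1,j_2}(z,z_n)/G_{j_0,j_2}(z_0,z_n)$, using $\Z^k$-invariance to replace $(z,z_n)$ by $(0,z_n-z)$, and the fact that $\|z_n-z\|/\|z_n-z_0\|\to 1$ and $u\cdot(z_n-z)-u\cdot(z_n-z_0)\to u\cdot(z_0-z)$, the $\ell_{j_2}(u)$ and $\|z_n\|^{(k-1)/2}$ factors cancel and one gets $K_{j_1,j_2}(z,z_n)\to \bigl(r_{j_1}(u)/r_{j_0}(u)\bigr)\mathrm{e}^{u\cdot(z-z_0)}$, which is the claimed form with $C_{j_1}=r_{j_1}(u)/r_{j_0}(u)$ depending only on $j_1$.

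The main obstacle is the saddle-point / summation step that upgrades a pointwise local limit theorem for $\hat p^{(n)}$ into sharp asymptotics for the infinite sum $G=\sum_n p^{(n)}$; this requires uniform Gaussian control on the local limit theorem over $z$ in a whole cone around $n\nabla\lambda(u)$, plus exponential decay of the tail outside this cone, in order to justify concentrating the sum near the critical index $n_*\sim\|z\|/\|\nabla\lambda(u)\|$. Assumptions~\ref{Assumption1} and~\ref{Assumption2}, together with the strong irreducibility and the fact that $u$ lies in the interior of $\mathcal{F}_0$, provide exactly the exponential integrability and non-degenerate drift/covariance needed to make this Fourier-analytic argument go through, exactly as in \cite{Dussaule}.
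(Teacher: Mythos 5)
Your overall strategy---exponential tilting by the Perron--Frobenius eigenvector of $F(u)$, a matrix-valued local limit theorem for the tilted chain, and a saddle-point summation over $n$---is exactly the Ney--Spitzer route that \cite{Dussaule} follows (the paper itself only cites Proposition~3.17 of \cite{Dussaule} and gives no proof). The tilt is set up correctly: $F(u)r(u)=r(u)$ does make $\hat p$ stochastic, $u\in\mathcal F$ does give exponential moments, and the drift computation via $\nabla\lambda=\ell^T(\nabla F)r$ is right.

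However, there is a genuine gap at the decisive step, and it is not the one you flag. The displayed Green's function asymptotic $G_{j_1,j_2}(0,z_n)\sim A(u)r_{j_1}(u)\ell_{j_2}(u)\|z_n\|^{-(k-1)/2}\mathrm{e}^{-u\cdot z_n}$ with the \emph{fixed} $u=\varphi(\tilde z)$ is false for a general sequence with $z_n/\|z_n\|\to\theta$. Undoing the tilt gives $G(0,z)=\mathrm{e}^{-u\cdot z}(r_{j_1}/r_{j_2})\hat G(0,z)$, and the Green's function $\hat G(0,z)$ of the drifted walk carries a Gaussian factor of order $\exp\bigl(-c\,\|z\|\,|\hat z-\hat m|^2\bigr)$ where $\hat m=\nabla\lambda(u)/\|\nabla\lambda(u)\|=\theta$; this factor is $1+o(1)$ only when $|\hat z_n-\theta|=o(\|z_n\|^{-1/2})$, i.e.\ essentially when $z_n$ lies on the ray in direction $\theta$, which the proposition does not assume. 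Equivalently, the correct exponent is $-u(\hat z_n)\cdot z_n$ with $u(\hat w)$ depending on the exact direction of $w$, and $\bigl(u(\hat z_n)-u(\theta)\bigr)\cdot z_n$ need not tend to $0$ (it can even be unbounded). Consequently your final ratio step, which inserts the same fixed $u$ into numerator and denominator and cancels exponents by the identity $u\cdot(z_n-z)-u\cdot(z_n-z_0)=u\cdot(z_0-z)$, is not justified as written. What is actually needed---and what constitutes the main technical content of \cite{NeySpitzer} and of the matrix extension in \cite{Dussaule}---is a saddle-point asymptotic for $G(0,w)$ that is \emph{uniform in the direction} $\hat w$, together with the observation that $w\mapsto u(\hat w)\cdot w$ is the support function of the convex body $D=\{\lambda\le 1\}$ (here Assumptions~\ref{Assumption1} and~\ref{Assumption2} and Lemma~\ref{explicithomeo} enter), whose gradient at $w$ is $u(\hat w)$; the mean value theorem applied to this support function, not the naive cancellation, is what makes the exponents in the Martin kernel ratio converge to $u\cdot(z_0-z)$. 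Your closing paragraph locates the difficulty in concentrating the sum over $n$, but that part is routine; the uniformity over directions and the Legendre-duality argument in the ratio are the missing ingredients.
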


Consider now a chain $p$ on $\Z^k\times \N$.
If $N\geq1$, define the induced chain $p_N$ as the chain of the first return to $\Z^k\times \{1,...,N\}$,
that is, if $(z,j),(z',j')\in \Z^k\times \{1,...,N\}$,
\begin{align*}
&p_N((z,j),(z',j'))=p((z,j),(z',j'))\\
&+\sum_{k\geq 1}\underset{\underset{j1,...,j_{k}>N}{(z_1,j_1),...,(z_{k},j_{k})}}{\sum}p((z,j),(z_1,j_1))p((z_1,j_1),(z_2,j_2))...p((z_k,j_k)(z,j')).
\end{align*}

Denote by $G$ the Green's function associated to $p$ and by $G_N$ the Green's function associated to the induced chain $p_N$.
Then, we have the following lemma.

\begin{lemma}\label{sameGreenDussaule}
The restriction to $\Z^k\times\{1,...,N\}$ of the Green's function $G$ coincides with the Green's function $G_N$.
\end{lemma}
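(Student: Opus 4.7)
The plan is to rewrite both Green's functions as sums over paths in the ambient state space and to match them by decomposing each ambient path according to its successive visits to $A := \Z^k \times \{1, \ldots, N\}$. Fix $x, y \in A$. A path $x = x_0, x_1, \ldots, x_n = y$ contributing to $p^{(n)}(x,y)$ starts and ends in $A$, so it has a well-defined sequence of visit times $0 = t_0 < t_1 < \cdots < t_m = n$ at which it lies in $A$. Its \emph{visit trace} is the sequence $(x_{t_0}, x_{t_1}, \ldots, x_{t_m})$, a path in $A$ from $x$ to $y$.

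The formula defining $p_N((z,j),(z',j'))$ is precisely the total mass of excursions from $(z,j)$ to $(z',j')$ through $A^c = \Z^k \times \{N+1, N+2, \ldots\}$ (including the degenerate zero-length excursion, which contributes the single term $p((z,j),(z',j'))$). Hence, the contribution of all ambient paths with a prescribed visit trace $(y_0, y_1, \ldots, y_m)$ factors as $\prod_{i=1}^m p_N(y_{i-1}, y_i)$, one factor per excursion obtained by summing over its length and its intermediate states in $A^c$.

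Summing first over visit traces of a given length $m$, then over $m$, and using nonnegativity to reorder sums freely, one obtains
\begin{equation*}
G(x,y) = \sum_{n \geq 0} p^{(n)}(x,y) = \sum_{m \geq 0} \sum_{\substack{y_0 = x,\, y_m = y \\ y_1, \ldots, y_{m-1} \in A}} \prod_{i=1}^m p_N(y_{i-1}, y_i) = \sum_{m \geq 0} p_N^{(m)}(x,y) = G_N(x,y),
\end{equation*}
with the convention $p_N^{(0)}(x,y) = \delta_{x,y}$. There is no real obstacle here: the statement is a standard induced-chain identity and the entire argument is excursion bookkeeping, made harmless by the positivity of $p$ (so Fubini applies termwise, whether or not $G$ and $G_N$ are finite). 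The only point worth noting is that the decomposition exploits crucially that both endpoints $x$ and $y$ lie in $A$, which is exactly the hypothesis of the lemma.
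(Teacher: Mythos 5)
Your proof is correct and is essentially the same argument as the paper's: the paper expands $G_N$ by substituting the excursion definition of $p_N$ and reorganizes the resulting sum into $G$, while you run the same path/excursion correspondence in the opposite direction; both reduce to the observation that each ambient trajectory between two points of $\Z^k\times\{1,\dots,N\}$ decomposes uniquely along its visits to that set, with nonnegativity justifying the rearrangement.
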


\begin{proof}
Let $(z,j),(z',j')\in \Z^k\times \{1,...,N\}$.
By definition,
\begin{align*}
&G_N((z,j),(z',j'))=\sum_{n\geq 0}p_N^{(n)}((z,j),(z',j'))\\
&=\sum_{n\geq 0}\underset{j_i\leq N}{\underset{(z_1,j_1),...,(z_{n},j_{n})}{\sum}}
p_N((z,j),(z_1,j_1))p_N((z_1,j_1),(z_2,j_2))\cdots p_N((z_{n},j_{n}),(z',j'))\\
&+p_N^{(0)}((z,j),(z',j')).
\end{align*}

We use the notation $\gamma_i=(z_i,j_i)$.
Since $p_N$ is the transition kernel of the first return to $\Z^k\times \{1,...,N\}$,
$$p_N(\gamma_i,\gamma_{i+1})=\sum_{m\geq 0}\underset{j_i^{(l)}>N}{\underset{\gamma_i^{(1)},...,\gamma_i^{(m-1)}}{\sum}}p(\gamma_i,\gamma_i^{(1)})p(\gamma_i^{(1)},\gamma_i^{(2)})\cdots p(\gamma_i^{(m-1)},\gamma_{i+1}),$$
where $j_i^{(l)}$ is the projection of $\gamma_i^{(l)}\in \Z^k\times \N$ on the $\N$ factor.

Reorganizing the first sum, one gets
$$G_N((z,j),(z',j'))=\sum_{n\geq 0}\underset{\gamma_i\in \Z^k\times \N }{\underset{\gamma_1,...,\gamma_{n-1}}{\sum}}p(\gamma,\gamma_1)p(\gamma_1,\gamma_2)\cdots p(\gamma_{n-1},\gamma')=G(\gamma,\gamma').$$

In other words, every trajectory from $(z,j)$ to $(z',j')$ for the initial chain $p$ defines a trajectory from $(z,j)$ to $(z',j')$ for $p_N$, excluding every point of the path that is not in $\Z\times \{1,...,N\}$
and every trajectory for $p_N$ is obtained in such a way.
Summing over all trajectories, the two Green's functions coincide.
\end{proof}

Let $M\geq 0$.
If $p$ is a chain on $\Z^k\times \{1,...,N\}$, say that $p$ has exponential moments up to $M$ if for every $j,j'\in \{1,...,N\}$,
$$\sum_{z\in \mathbb{Z}^k}p_{j,j'}(0,z)\mathrm{e}^{M\|z\|}<+\infty.$$

We also have the following proposition. 


\begin{proposition}\label{inducedchainassumption1}
Let $p$ be a $\Z^k$-invariant, finitely supported, strongly irreducible transition kernel on $\Z^k\times \N$.
Then, there exist $N_0\geq0$ and $M\geq 0$ 
such that whenever $N\geq N_0$ and the chain $p_N$ has exponential moments up to $M$, $p_{N}$ satisfies Assumption~\ref{Assumption1}.
\end{proposition}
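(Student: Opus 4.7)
The plan is to show that $D_N \subset \overline{B(0, R)}$ for some radius $R > 0$ depending only on $p$, and then pick the moment threshold $M$ strictly larger than $R$. Indeed, if $p_N$ has exponential moments up to $M$, then $\mathcal{F}_N \supset B(0, M) \supset \overline{B(0, R)} \supset D_N$; every $\R^k$-limit point of $D_N$ then automatically lies in $\mathcal{F}_N$, and combined with closedness of $D_N$ in $\mathcal{F}_N$ (by continuity of $\lambda_N$) and boundedness in $\R^k$, this gives compactness of $D_N$.

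To produce such $R$, fix a base level $j_0 \in \N$. By strong irreducibility of $p$, for each signed basis vector $\pm e_i$ of $\Z^k$ there exist an integer $n_i^{\pm} \geq 1$ and a $p$-trajectory from $(0, j_0)$ to $(\pm e_i, j_0)$ of length $n_i^{\pm}$ whose $p$-weight is bounded below by some common constant $c > 0$. Finite support of $p$ implies that each of these finitely many trajectories visits only finitely many points of $\Z^k \times \N$; let $N_0$ be an upper bound on the $\N$-coordinates along all of them. For every $N \geq N_0$ each trajectory stays in $\Z^k \times \{1, \ldots, N\}$ and each of its consecutive pairs is a one-step transition of $p_N$ (since $p_N \geq p$ on the subset), so
\begin{equation*}
p_N^{(n_i^{\pm})}\bigl((0, j_0), (\pm e_i, j_0)\bigr) \;\geq\; c \qquad \text{for all } N \geq N_0.
\end{equation*}

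Expanding via $\Z^k$-invariance, $(F_N(u)^n)_{j_0, j_0} = \sum_{z} p_N^{(n)}((0, j_0), (z, j_0))\, e^{u \cdot z}$. Since $\lambda_N(u)$ equals the spectral radius of $F_N(u)$, the standard inequality $\rho(A) \geq (A^n_{jj})^{1/n}$ (valid for any nonnegative matrix, by concatenating length-$n$ cycles at $j$) yields
\begin{equation*}
\lambda_N(u) \;\geq\; c^{1/n_i^{\pm}} \exp\!\bigl(\pm u \cdot e_i / n_i^{\pm}\bigr) \qquad \text{for all } N \geq N_0.
\end{equation*}
For $u = t\theta$ with $\theta \in \Ss^{k-1}$ and $t > 0$, one may choose $(i, \pm)$ with $\pm \theta \cdot e_i \geq 1/\sqrt{k}$, and the right-hand side exceeds $1$ once $t > -\sqrt{k}\log c$. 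Hence $D_N \subset \overline{B(0, R)}$ for a single $R > 0$ depending only on $c$ and $k$, uniformly for $N \geq N_0$, and any $M > R$ completes the argument.

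The main point on which the proof hinges is this uniform-in-$N$ lower bound on $p_N^{(n)}$. It uses both hypotheses on $p$ simultaneously: strong irreducibility provides finite $p$-trajectories realizing the desired endpoints, while finite support of $p$ forces each such trajectory to have bounded $\N$-footprint, so the very same trajectories witness the bound in every $p_N$ with $N \geq N_0$ at once.
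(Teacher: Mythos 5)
Your proof is correct and takes essentially the same route as the paper: strong irreducibility plus finite support yield trajectories from $(0,j_0)$ to the signed basis vectors with uniformly bounded $\N$-footprint, so that $p_N^{(n)}\bigl((0,j_0),(\pm e_i,j_0)\bigr)\ge c$ for all $N\ge N_0$, and the Perron--Frobenius bound $\lambda_N(u)^n\ge (F_N(u)^n)_{j_0,j_0}$ then forces $\lambda_N(u)>1$ outside a ball of radius independent of $N$. One small point in your favor: you work with both signed vectors $\pm e_i$, which is needed to cover directions $u$ with all coordinates negative, whereas the paper's text only invokes the $e_i$.
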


\begin{proof}
We will prove that there exist $u_0\in \R^k$ and $M\geq 0$, independent of large enough $N$, such that if the chain $p_N$ has exponential moments up to $M$, then
\begin{equation}\label{equationlambdauleq2}
\{u\in \mathbb{R}^k, \lambda(u)\leq 2\} \subset \{u\in \mathbb{R}^k, \|u\|\leq u_0\} \subset \mathcal{F}.
\end{equation}
Denote by $(e_1,...,e_k)$ the canonical basis in $\mathbb{R}^k$ and fix $j_0\in \{1,...,N\}$.
Since the chain $p$ is strongly irreducible, there exists $n_i$ such that for every $n\geq n_i$, $p^{(n)}((0,1),(e_i,1))>0$, so that there is a path of length $n$ from $(0,1)$ to $(e_i,1)$.
This path stays in $\Z^k\times \{1,...,N_i\}$ for some $N_i$, since the chain $p$ is finitely supported.
Thus, for every $N\geq N_i$, the restricted chain $p_{N_i}$ satisfies $p_{N_i}^{(n)}((0,1),(e_i,1))>0$, if $n\geq n_i$.
Thus, there exist
$a>0$, $N_0$ and $n_0\geq 0$ such that for every $i\in \{1,...,k\}$,
$p_N^{(n)}(0,e_i)\geq a$ for $N\geq N_0$, $n\geq n_0$ and $a$ does not depend on $N_0$.

Now, let $u\in \mathbb{R}^d$ and fix $L\geq0$.
There exists $u_0$ such that if $\|u\| \geq u_0$, then at least one of the $\mathrm{e}^{u\cdot e_i}$ will be larger than $\frac{L}{a}$, so that $F_{j_0,j_0}(u)^{n_0}\geq L$.
Since $a$ does not depend on $N_0$, $u_0$ does not depend on $N_0$.
Moreover, if $p$ has exponential moments up to $u_0+1$, then $F(u)$ has finite entries for $u_0\leq \|u\|\leq u_0+1$ and so does $F(u)^{n_0}$.
Let $v(u)$ be an eingenvector associated to $\lambda(u)$.
Then, it is an eigenvector of $F(u)^{n_0}$ associated to $\lambda(u)^{n_0}$.
Since $F(u)$ is strongly irreducible, $v(u)$ has non-zero coordinates and we can even choose $v(u)$ with strictly positive coordinates.
Denote by $v(u)(j_0)$ its $j_0$th coordinate.
Then, $v(u)(j_0)\lambda(u)^{n_0}\geq F_{j_0,j_0}(u)^{n_0}v(u)(j_0)$ so that $\lambda(u)^{n_0}\geq F_{j_0,j_0}(u)^{n_0}\geq L$.

Consequently, $\lambda(u)^{n_0}$, hence $\lambda(u)$, can be made arbitrarily large, when enlarging $\|u\|$.
Moreover, if $p$ has sufficiently large exponential moments, then $\lambda(u)$ is well defined for arbitrarily large $\|u\|$.
Equation (\ref{equationlambdauleq2}) follows from these two facts.
This proves that the sub-level $\lambda(u)\leq 1$, is bounded, thus compact and contained in the open set $\lambda(u)<2$, which is included in $\mathcal{F}$.
Thus, Assumption~\ref{Assumption1} is satisfied.
\end{proof}

\begin{remark}
It will be important in further applications that the $M$ in the statement of this proposition is independent of $N$.
Indeed, in the next section we will be in a similar situation and we will prove that for every $K$, for large enough $N$, the chain $p_N$ has exponential moments up to $K$.
We will then choose $N_1$ so that if $N\geq N_1$, the chain $p_N$ has exponential up to $M$, so that for $N\geq N_0,N_1$, the chain $p_N$ satisfies Assumption~\ref{Assumption1}.
\end{remark}
We will also use the following.

\begin{lemma}\label{inducedchainassumption2}
Let $p$ be a $\Z^k$-invariant, strongly irreducible chain on $\Z^k\times \{1,...,N\}$.
If $p$ is (strictly) sub-Markov, then it satisfies Assumption~\ref{Assumption2}.
\end{lemma}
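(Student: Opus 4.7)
The strategy is to exhibit a single point $u \in \mathcal{F}$ where $\lambda(u) < 1$; the natural candidate is $u = 0$.

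First, I would verify that $F(0)$ is a well-defined nonnegative matrix with finite entries. Indeed, $F_{j_1,j_2}(0) = \sum_{z \in \Z^k} p_{j_1,j_2}(0,z)$ is bounded by the total mass of the chain out of $(0,j_1)$, which is at most $1$ by the (sub-)Markov hypothesis. So $0$ lies in $\mathcal{F}_0$, and (in the settings where this lemma is applied, via Proposition~\ref{inducedchainassumption1}) the exponential moment assumption on the chain ensures $0$ lies in the interior $\mathcal{F}$, so that $\lambda(0)$ is meaningful as a value of the analytic function $\lambda$.

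The main observation is then a direct Perron--Frobenius row-sum bound. The row sums of $F(0)$ are
\[
\sum_{j_2 = 1}^{N} F_{j_1, j_2}(0) \;=\; \sum_{j_2, z} p_{j_1, j_2}(0, z) \;=\; \sum_{y \in \Z^k \times \{1, \dots, N\}} p\bigl((0, j_1), y\bigr),
\]
and the strictly sub-Markov hypothesis says exactly that each of these row sums is $< 1$. Since $F(0)$ is nonnegative and strongly irreducible, the Perron--Frobenius theorem gives the standard bound that the dominant eigenvalue is bounded above by the maximum row sum; hence $\lambda(0) \leq \max_{j_1} \sum_{j_2} F_{j_1, j_2}(0) < 1$. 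This immediately yields $\min_{u \in \mathcal{F}} \lambda(u) \leq \lambda(0) < 1$, which is Assumption~\ref{Assumption2}.

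I do not expect any real obstacle here: the Perron--Frobenius row-sum inequality is classical (it follows from applying $F(0)$ to a positive Perron eigenvector and comparing at the coordinate of maximal entry), and strong irreducibility gives a strictly positive such eigenvector. The only point demanding a little care is making sure $\lambda$ is actually evaluated at an interior point of $\mathcal{F}$ rather than on the boundary of its domain, but this is automatic in the context in which the lemma is invoked, where the induced chain $p_N$ is arranged to have exponential moments up to some $M > 0$ around $0$.
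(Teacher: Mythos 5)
Your argument is correct and is essentially the paper's own proof: both evaluate $\lambda$ at $u=0$, observe that $F(0)$ is a strictly sub-stochastic irreducible nonnegative matrix, and invoke Perron--Frobenius to conclude $\lambda(0)<1$. One small caveat: the way ``strictly sub-Markov'' is actually established in the paper (cf.\ Lemma~\ref{lemmasubMarkovneighborhoodhorosphere}) only guarantees that \emph{some} row of $F(0)$ has sum $<1$, so your bound $\lambda(0)\leq \max_{j_1}\sum_{j_2}F_{j_1,j_2}(0)$ would a priori only give $\lambda(0)\leq 1$; you should instead use the refinement that for an irreducible nonnegative matrix the spectral radius attains the maximal row sum only when all row sums are equal, which still yields $\lambda(0)<1$ here.
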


\begin{proof}
The fact that the chain is strictly sub-Markov
means that the matrix $F(0)$ defined above is strictly sub-stochastic.
In particular, its dominant eigenvalue $\lambda(0)$ satisfies $\lambda(0)<1$ and the minimum of $\lambda$ is strictly less than 1, so Assumption~\ref{Assumption2} is satisfied.
\end{proof}

\medskip
Combining the explicit formula given in Proposition~\ref{latticeMartinboundary} together with Proposition~\ref{inducedchainassumption1} and Lemma~\ref{inducedchainassumption2}, we obtain the following corollary which describes convergence in the Martin boundary for a chain on $\Z^k\times \N$.
It will be important in the proof of Theorem~\ref{maintheorem} in the next section.
Indeed, we will identify the random walk on $\Gamma$ with a $\Z^k$-invariant Markov chain on $\Z^k \times \N$. 
To identify the Martin boundary of the induced chain $p_N$, we will verify the conditions of this corollary. 

\begin{corollary}\label{markovsummary}
Let $p$ be a $\Z^k$-invariant, finitely supported, strongly irreducible transition kernel on $\Z^k\times \N$ such that: 

a)For large enough $N$, the induced chain $p_N$ on $\Z^k\times \{1,...,N\}$ is strictly sub-Markov.

b)For all $M$ there exists an $N_0>0$ such that for $N>N_0$, the chain $p_N$ has exponential moments up to $M$.

Then, a sequence $(z_{n},j_{n})$ in $\Z^{k}\times \N$, with $\mathrm{sup}(j_n)<+\infty$, converges to a point in the Martin boundary of $p$ if and only if $\|z_n\|$ tends to infinity and $\frac{z_n}{\|z_n\|}$ converges to a point of $\mathbb{S}^{k-1}$.
\end{corollary}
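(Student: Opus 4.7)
The plan is to reduce the problem to Proposition~\ref{latticeMartinboundary}, applied to the induced chain $p_N$ for sufficiently large $N$. Let $J = \sup_n j_n < \infty$, fix a base point $o = (z_0, j_0)$ with $j_0 \leq J$, and fix an arbitrary evaluation point $(z, i) \in \Z^k \times \N$. I would first select $N \geq \max(i, J, j_0)$ large enough that the induced chain $p_N$ on $\Z^k \times \{1, \ldots, N\}$ satisfies all the hypotheses of Proposition~\ref{latticeMartinboundary}: hypothesis (a) makes $p_N$ strictly sub-Markov, which via Lemma~\ref{inducedchainassumption2} yields Assumption~\ref{Assumption2}; hypothesis (b) gives $p_N$ exponential moments beyond the threshold $M$ from Proposition~\ref{inducedchainassumption1}, yielding Assumption~\ref{Assumption1}; and $\Z^k$-invariance is inherited from $p$. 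Strong irreducibility of $p_N$ is a separate point to check, but it follows from strong irreducibility of $p$ (together with the Spitzer-style trick already used in \cite{Dussaule}) once $N$ is taken large enough.

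Granted these, Lemma~\ref{sameGreenDussaule} identifies the Green's function of $p$, restricted to $\Z^k \times \{1, \ldots, N\}$, with the Green's function of $p_N$, so the same identification holds for the Martin kernels. For the sufficient direction, if $\|z_n\| \to \infty$ and $z_n/\|z_n\| \to \theta \in \mathbb{S}^{k-1}$, Proposition~\ref{latticeMartinboundary} combined with Lemma~\ref{explicithomeo} yields
$$K_{i, j_n}^{(p)}(z, z_n) = K_{i, j_n}^{(p_N)}(z, z_n) \longrightarrow C_i \, \mathrm{e}^{u \cdot (z - z_0)}, \qquad u = \varphi(\theta),$$
for each fixed $(z, i)$. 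The pointwise limit exists, and since $\|z_n\| \to \infty$ the limit function does not coincide with the Martin kernel of any interior point, so the sequence converges to a genuine Martin boundary point.

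For the converse, any sequence converging to a point in the Martin boundary must leave every finite subset of $\Z^k \times \N$; combined with $\sup j_n < \infty$ this forces $\|z_n\| \to \infty$. If $z_n/\|z_n\|$ failed to converge, two subsequences would yield distinct limits $\theta_1 \neq \theta_2 \in \mathbb{S}^{k-1}$, and the sufficient direction applied to each subsequence would produce distinct Martin kernel limits $C_i \, \mathrm{e}^{u_j \cdot (z - z_0)}$, $j=1,2$ (distinct because $\varphi$ is a homeomorphism by Lemma~\ref{explicithomeo}, so $u_1 \neq u_2$, and exponentials with different linear exponents are distinct as functions of $z$). This would contradict convergence of the full sequence.

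The main technical obstacle is the uniform verification of the hypotheses of Proposition~\ref{latticeMartinboundary} for $p_N$ as $N$ grows — especially arranging strong irreducibility and making sure the threshold $N$ depends only on the chosen evaluation point $(z, i)$, not on the tail of the sequence. Assumptions (a) and (b) of the corollary are designed precisely so that this reduction can be performed simultaneously for all sufficiently large $N$, after which Lemma~\ref{sameGreenDussaule} effortlessly transfers the resulting limit formula back to the ambient chain $p$.
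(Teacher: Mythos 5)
Your proposal is correct and follows essentially the same route the paper intends: the paper's proof of Corollary~\ref{markovsummary} is precisely the combination of Proposition~\ref{latticeMartinboundary}, Proposition~\ref{inducedchainassumption1}, and Lemma~\ref{inducedchainassumption2}, with Lemma~\ref{sameGreenDussaule} transferring the Green's function (hence the Martin kernels) from $p$ to $p_N$, and strong irreducibility of $p_N$ handled via the Spitzer-type trick exactly as you indicate. The only detail worth noting is that your choice of $N$ must be allowed to depend on the evaluation point $(z,i)$, which is harmless since the convergence claim is pointwise and the kernel identification holds for every sufficiently large $N$.
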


Thus, the fact that $(z_{n},j_{n})$ converges to a point in the Martin boundary does not depend on the sequence $(j_n)$.
In particular, $(z_{n},j_{n})$ converges in the Martin boundary if and only if its projection $(z_n,0)$ on $\Z^k\times \{0\}$ converges in the Martin boundary and the limits are the same.

\section{Convergence of Martin kernels: proof of Theorem \ref{maintheorem}}\label{SectionconvergenceMartinkernels}
Let $\Gamma$ be hyperbolic 
relative to a collection $\Omega$ of infinite virtually abelian subgroups.
Let $\mu$ be a measure on $\Gamma$ whose finite support generates $\Gamma$ as a semigroup.
In this section we prove that the Martin boundary is a $Z$-boundary, proving Theorem~\ref{maintheorem}.
Recall that $\partial P$ denotes the geometric boundary of a maximal parabolic subgroup $P$ defined in Section~\ref{Sectioncompactification}.
We fix a finite set $\Omega_0$ of representatives of conjugacy classes of $\Omega$.
We will deal separately with sequences converging to conical limit points and sequences converging in $g\partial P$ for some coset $gP$ of a parabolic subgroup $P$.
For the second case, we will apply results of Section~\ref{Sectionthickenedperipheralgroups}.
It will be more convenient to deal with strongly irreducible chains.
Thus, we first show that we can reduce to such chains.

In a very general context, consider a chain $p$ on a countable space $E$.
Define the modified chain $\tilde{p}$ on $E$ by
$$\tilde{p}(x_1,x_2)=\frac{1}{2}\Delta(x_1,x_2)+\frac{1}{2} p(x_1,x_2),$$
where $\Delta(x_1,x_2)=0$ if $x_1\neq x_2$ and 1 otherwise.
Denote by $\tilde{p}^{(n)}$ the $n$th convolution power of $\tilde{p}$. Also denote by $\tilde{G}$ the associated Green's function:
$$\tilde{G}(x_1,x_2)=\sum_{n\geq 0}\tilde{p}^{(n)}(x_1,x_2).$$
We have the following (see \cite[Lemma~3.20]{Dussaule}).

\begin{lemma}\label{Spitzertrick}
With these notations, $\frac{1}{2}\tilde{G}(x_1,x_2)=G(x_1,x_2)$ and thus the Martin kernels are the same.
\end{lemma}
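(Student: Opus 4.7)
The plan is to compute $\tilde{G}$ directly by expanding the convolution powers of $\tilde{p}$ and summing, so that the equality $\frac{1}{2}\tilde{G} = G$ falls out of a binomial identity. The key observation is that the kernel $\Delta$ acts as the identity under the convolution operation on kernels: $\Delta * q = q * \Delta = q$ for any kernel $q$. Hence, writing $\tilde{p} = \frac{1}{2}\Delta + \frac{1}{2}p$, the kernels $\Delta$ and $p$ commute under convolution, so the binomial theorem gives
\[
\tilde{p}^{(n)} = \left(\frac{1}{2}\right)^{n} \sum_{k=0}^{n} \binom{n}{k} p^{(k)},
\]
with the convention $p^{(0)} = \Delta$.

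I would then substitute this into the definition of $\tilde{G}$ and interchange the order of summation (which is legal since all terms are non-negative in $[0,+\infty]$):
\[
\tilde{G}(x_1,x_2) = \sum_{n \geq 0} \left(\frac{1}{2}\right)^{n} \sum_{k=0}^{n} \binom{n}{k} p^{(k)}(x_1,x_2) = \sum_{k \geq 0} p^{(k)}(x_1,x_2) \sum_{n \geq k} \binom{n}{k} \left(\frac{1}{2}\right)^{n}.
\]
The inner sum is the standard generating function identity $\sum_{n \geq k} \binom{n}{k} x^{n} = \frac{x^{k}}{(1-x)^{k+1}}$ evaluated at $x = 1/2$, which yields $\frac{(1/2)^{k}}{(1/2)^{k+1}} = 2$ for every $k$. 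Therefore $\tilde{G}(x_1,x_2) = 2 \sum_{k \geq 0} p^{(k)}(x_1,x_2) = 2\, G(x_1,x_2)$, which is the first claim.

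For the statement about Martin kernels, I would simply note that the multiplicative constant $2$ cancels in the ratio defining the Martin kernel: fixing the same base point $o$ for both chains,
\[
\tilde{K}_{y}(x) = \frac{\tilde{G}(x,y)}{\tilde{G}(o,y)} = \frac{2\, G(x,y)}{2\, G(o,y)} = K_{y}(x),
\]
so the two chains have identical Martin kernels, hence the same Martin compactification and boundary. There is no real obstacle here beyond the bookkeeping of the binomial expansion; the only point requiring a word of justification is the interchange of the double sum, which is automatic since every term is non-negative (Tonelli), and the implicit finiteness of $G$ needed for the chain to be transient in the application.
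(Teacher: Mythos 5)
Your proof is correct and is essentially the standard argument for this ``lazy chain'' identity: the paper itself only cites \cite[Lemma~3.20]{Dussaule}, and the proof there is exactly this binomial expansion of $\tilde{p}^{(n)}$ using that $\Delta$ is the convolution identity, followed by the generating-function evaluation $\sum_{n\geq k}\binom{n}{k}2^{-n}=2$. The interchange of sums is indeed justified by non-negativity, and the cancellation of the factor $2$ in the Martin kernels is immediate.
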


In our context, this means we can assume that $\mu(e)>0$.
Now, since the random walk is irreducible, if it satisfies $\mu(e)>0$, it is strongly irreducible.

\subsection{Convergence to conical limit points}\label{Sectionconvergenceconical}
We first study conical limit points.
We prove the following.

\begin{proposition}\label{Martinboundaryconicalpoints}
Consider a sequence $(g_n)$ of $\Gamma$ that converges to a conical point $\alpha$ in the Bowditch boundary.
Then, $(g_n)$ converges to a point in the Martin boundary.
\end{proposition}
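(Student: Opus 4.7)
The plan follows \cite{GGPY}, adapting Ancona's original argument for hyperbolic groups via the relative Ancona inequality (Theorem~\ref{Ancona1}). First, apply Lemma~\ref{Spitzertrick} to replace $\mu$ by $\tfrac12\Delta+\tfrac12\mu$; this leaves the Martin kernels unchanged but ensures $\mu(e)>0$, hence strong irreducibility and standard Harnack estimates. By compactness and metrizability of the Martin compactification, it suffices to fix $x\in\Gamma$ and verify that the sequence $(K_{g_n}(x))$ has a unique accumulation point in $\mathbb{R}$.

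The geometric input is the abundance of transition points. Since $\alpha$ is conical, one can fix a word-geodesic ray $\gamma$ in $C_{S}\Gamma$ from $o$ towards $\alpha$ containing an infinite sequence of $(\eta,r)$-transition points $y_1,y_2,\ldots$ with $d(o,y_k)\to\infty$. Standard fellow-travelling in relatively hyperbolic Cayley graphs shows that for each fixed $k$ and each fixed $x\in\Gamma$, the geodesics $[o,g_n]$ and $[x,g_n]$ both pass within a uniformly bounded distance of $y_k$ once $n$ is large, and $y_k$ lies close to a common $(\eta',r')$-transition point of both geodesics (with uniform constants). This is where Proposition~\ref{Floydgeo} enters, via the Floyd lower bound along transition points.

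The analytic input is then the relative Ancona inequality. Given $\epsilon>0$, Theorem~\ref{Ancona1} yields $R>0$ so that for all large $n$,
\begin{equation*}
G(o,g_n;B_R^c(y_k))\le\epsilon\,G(o,g_n)\quad\text{and}\quad G(x,g_n;B_R^c(y_k))\le\epsilon\,G(x,g_n).
\end{equation*}
Decomposing Green's functions according to the first entry into $B_R(y_k)$ upgrades these inequalities into a two-sided multiplicative estimate $G(z,g_n)=(1+O(\epsilon))\sum_{w\in B_R(y_k)}F_R(z,w)\,G(w,g_n)$ for $z\in\{o,x\}$, where $F_R(z,\cdot)$ is the first-hitting distribution on $B_R(y_k)$. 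Combined with the Harnack-type comparison $G(w,g_n)\asymp G(w',g_n)$ uniformly for $w,w'\in B_R(y_k)$, this pins down the ratio $K_{g_n}(x)=G(x,g_n)/G(o,g_n)$ to within a factor $1+O(\epsilon)$ of a quantity depending only on the first-hitting data and the (bounded) distribution of weights on $B_R(y_k)$. Iterating the decomposition over a second, deeper transition point $y_\ell$ with $\ell>k$ and telescoping as in \cite{Ancona} forces the bounded oscillation of $(K_{g_n}(x))$ to collapse to a single limit, which depends only on $\alpha$ and not on the particular subsequence.

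The main obstacle I anticipate is the passage from the one-sided bounds of Theorem~\ref{Ancona1} to a genuinely two-sided multiplicative Ancona estimate, and, more subtly, the iterated telescoping that turns ``Martin kernels within $(1+O(\epsilon))$ of a common reference value'' into genuine Cauchyness of $(K_{g_n}(x))$. This is the technical heart of Ancona's original argument: the multiplicative error of the two-sided estimate must shrink as the transition point is pushed deeper, so that summing errors over many transition points along the geodesic produces a convergent series. On the geometric side, locating a single transition point that is simultaneously deep on $[o,g_n]$ and on $[x,g_n]$ requires the relatively-hyperbolic analogue of the fact that geodesics sharing an ideal endpoint fellow-travel up to bounded detours through horoballs.
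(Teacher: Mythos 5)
The paper does not reprove anything here: its entire proof of Proposition~\ref{Martinboundaryconicalpoints} is to cite two results of \cite{GGPY}, namely that the identity extends to a continuous equivariant surjection $F$ from the Martin compactification onto the Bowditch compactification (Proposition~\ref{MartintoBowditch1}) and that $F^{-1}(\alpha)$ is a singleton for conical $\alpha$ (Proposition~\ref{MartintoBowditch2}). Given these, the proposition is immediate: if $g_n\to\alpha$ in the Bowditch compactification, every accumulation point of $(g_n)$ in the (compact, metrizable) Martin compactification lies in $F^{-1}(\alpha)$ by continuity of $F$, and since that fibre is a single point the sequence converges. Your proposal instead sets out to re-derive the content of those two citations from the relative Ancona inequality, i.e.\ to redo the technical core of \cite{GGPY} rather than use it as a black box.

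As a standalone proof, your sketch has a genuine gap, and it is exactly the one you flag yourself. Theorem~\ref{Ancona1} plus the first-entry decomposition and Harnack only give that $K_{g_n}(x)$ lies within a factor $1+O(\epsilon)$ of a quantity determined by the hitting data on a single ball $B_R(y_k)$; this bounds the oscillation of $(K_{g_n}(x))$ but does not make it tend to zero. To get actual convergence one must show that passing the decomposition through successive transition points $y_k, y_{k+1},\dots$ contracts the oscillation geometrically --- in Ancona's and in \cite{GGPY}'s treatment this is done by showing the associated positive transfer operators are uniform contractions (e.g.\ in the Birkhoff projective metric on positive functions on $B_R(y_k)$), with contraction constant controlled by the two-sided multiplicative estimate. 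Deriving that two-sided estimate from the one-sided inequality of Theorem~\ref{Ancona1}, and verifying the uniformity of the contraction along the infinitely many transition points guaranteed by conicality (together with the fellow-travelling of $[o,g_n]$ and $[x,g_n]$ through those points, which requires Proposition~\ref{Floydgeo} and the convergence-action characterization of conical points), is precisely the content of Sections~6--7 of \cite{GGPY}. Your outline names these steps but does not carry them out, so either you should supply that argument in full or, as the paper does, simply invoke \cite[Theorem~6.3 and Corollary~6.13]{GGPY}.
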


This is a consequence of the following results of \cite{GGPY}, which were recalled in the introduction.

\begin{proposition}\label{MartintoBowditch1}\cite[Theorem~6.3]{GGPY}
The identity map $\Gamma \rightarrow \Gamma$ extends to a continuous equivariant surjection $F$ from the Martin compactification to the Bowditch compactification.
\end{proposition}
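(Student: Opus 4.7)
My plan is to construct $F$ in two stages: first, show that Martin-convergence of a sequence in $\Gamma$ forces Bowditch-convergence, from which existence of a continuous map $F:\overline{\Gamma}_\mu\to\Gamma\cup\partial_B\Gamma$ extending the identity follows by density of $\Gamma$ in both compactifications; second, observe that equivariance is automatic since the identity on $\Gamma$ is $\Gamma$-equivariant and extension by density preserves this, and surjectivity follows because $\Gamma$ is dense in $\Gamma\cup\partial_B\Gamma$ and $\overline{\Gamma}_\mu$ is compact, so any $\alpha\in\partial_B\Gamma$ is the image of a Martin-accumulation point of a sequence in $\Gamma$ converging to $\alpha$.

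The substantive claim is thus: if $K_{g_n}\to K_\xi$ pointwise for some $\xi\in\partial_\mu\Gamma$, then $(g_n)$ converges in the Bowditch compactification. By compactness, this reduces to uniqueness of Bowditch subsequential limits. Suppose for contradiction that subsequences satisfy $g_{n_k}\to\alpha$ and $g_{m_l}\to\beta$ in $\Gamma\cup\partial_B\Gamma$ with $\alpha\neq\beta$. Via Gerasimov's Floyd map $\overline{\Gamma}_f\to\Gamma\cup\partial_B\Gamma$, the preimages of $\alpha$ and $\beta$ are disjoint compact subsets of $\overline{\Gamma}_f$: singletons over conical points, and Floyd limit sets of parabolic stabilizers over parabolic points, which are pairwise disjoint for distinct parabolic fixed points. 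Passing to further subsequences so that $(g_{n_k})$ and $(g_{m_l})$ converge in $\overline{\Gamma}_f$ to distinct points $\tilde\alpha\neq\tilde\beta$, the basic property of the Floyd metric on $\overline{\Gamma}_f$ yields $w\in\Gamma$ and $\delta>0$ with $\delta^f_w(g_{n_k},g_{m_l})>\delta$ for all large $k,l$.

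Theorem~\ref{Ancona-Floyd} then gives, for each $\epsilon>0$, an $R$ such that $G(g_{n_k},g_{m_l};B_R^c(w))\leq\epsilon\,G(g_{n_k},g_{m_l})$ for all large $k,l$. Combining the first-passage decomposition with Harnack's inequality on $B_R(w)$ upgrades this to a multiplicative factorization
\[G(g_{n_k},g_{m_l})\asymp G(g_{n_k},w)\,G(w,g_{m_l}),\]
with constants independent of $k,l$. Dividing by $G(o,g_{m_l})$ yields $K_{g_{m_l}}(g_{n_k})\asymp G(g_{n_k},w)\,K_{g_{m_l}}(w)$, and the symmetric inequality with the roles of $(g_{n_k})$ and $(g_{m_l})$ reversed gives an analogous estimate for $K_{g_{n_k}}(g_{m_l})$. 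Pointwise convergence $K_{g_{m_l}}(w),K_{g_{n_k}}(w)\to K_\xi(w)$ then forces an overdetermined relation between the ratios $G(g_{n_k},w)/G(o,g_{n_k})$ and $G(g_{m_l},w)/G(o,g_{m_l})$ that is incompatible with both subsequences having the common Martin limit $\xi$.

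The main obstacle is executing the final contradiction. Martin convergence only provides pointwise convergence of kernels at fixed arguments, while the inequalities above involve the moving arguments $g_{n_k},g_{m_l}$. The standard device is to combine Harnack ratios on the fixed ball $B_R(w)$ with a diagonal subsequence argument, so that both limits in $k$ and $l$ reduce to pointwise evaluations of $K_\xi$ near $w$. Once this contradiction is obtained, uniqueness of the Bowditch subsequential limit is established, and the construction of $F$ is complete.
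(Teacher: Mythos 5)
The paper itself offers no proof of this proposition: it is quoted directly from \cite[Theorem~6.3]{GGPY}, so your proposal is being measured against that external argument. Your overall framework is sound and matches the ingredients actually used there: reducing the claim to ``a sequence converging in the Martin compactification has a unique subsequential limit in the Bowditch compactification,'' extracting distinct Floyd limits $\tilde\alpha\neq\tilde\beta$ over distinct Bowditch points, deducing $\delta^f_o(g_{n_k},g_{m_l})>\delta$, and upgrading Theorem~\ref{Ancona-Floyd} to the multiplicative estimate $G(g_{n_k},g_{m_l})\asymp G(g_{n_k},w)\,G(w,g_{m_l})$ by a first-visit decomposition plus Harnack on the finite ball $B_R(w)$ are all correct steps, as are the density arguments for continuity, equivariance and surjectivity.

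The genuine gap is exactly where you flag it: the final contradiction does not follow from the relations you derive. Passing to the limit in $l$ and in $k$ gives $K_\xi(g_{n_k})\asymp G(g_{n_k},w)K_\xi(w)$ and $K_\xi(g_{m_l})\asymp G(g_{m_l},w)K_\xi(w)$. These are two \emph{decay} estimates, one along each subsequence, and they are mutually consistent: every Martin kernel already satisfies the a priori bounds $G(x,o)/G(o,o)\le K_\xi(x)\le G(o,o)/G(o,x)$ coming from the decompositions $G(x,y)\ge F(x,o)G(o,y)$ and $G(o,y)\ge F(o,x)G(x,y)$, and your two relations sit comfortably inside this range, so nothing is ``overdetermined.'' Harnack on the fixed ball $B_R(w)$ and diagonal extraction cannot rescue this, because every evaluation near the fixed point $w$ converges to the single positive number $K_\xi(w)$, which carries no information. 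What is missing is a \emph{growth} estimate for $K_\xi$ along test points $v_j$ escaping to infinity towards one of the two Bowditch limits, say $\alpha$: taking $v_j$ to be $(\eta,r)$-transition points on geodesics $[o,g_{n_k}]$ tending to $\alpha$ (possible when $\alpha$ is conical), Theorem~\ref{Ancona1} gives $G(o,g_{n_k})\asymp G(o,v_j)G(v_j,g_{n_k})$, hence $K_\xi(v_j)=\lim_k K_{g_{n_k}}(v_j)\asymp 1/G(o,v_j)\to\infty$, while $\delta^f_o(v_j,g_{m_l})>\delta'$ gives $K_\xi(v_j)=\lim_l K_{g_{m_l}}(v_j)\asymp G(v_j,o)\to 0$; that is the contradiction. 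You cannot substitute $v_j=g_{n_j}$, since a sequence converging to a conical point need not track a common geodesic ray, so $g_{n_j}$ need not be ``between'' $o$ and $g_{n_k}$ for $k\gg j$. Finally, the case where $\alpha$ is parabolic needs separate treatment, since transition points towards $\alpha$ run out inside the peripheral coset; handling this is precisely the additional work done in \cite{GGPY}.
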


\begin{proposition}\label{MartintoBowditch2}\cite[Corollary~6.13]{GGPY}
The preimage $F^{-1}(\alpha)$ of a conical limit point $\alpha$ consists of a single point.
\end{proposition}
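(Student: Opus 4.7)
The plan is to prove that whenever $(g_n)$ and $(h_n)$ are two sequences in $\Gamma$ both converging to $\alpha$ in the Bowditch compactification and both producing convergent Martin kernels, the two Martin boundary limits agree. Since the Martin compactification is compact and metrizable and $F$ is continuous, this shows that the fiber $F^{-1}(\alpha)$ consists of a single point: any two points of $F^{-1}(\alpha)$ arise as Martin limits of such sequences, and the above convergence would force them to coincide as functions on $\Gamma$.

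Because $\alpha$ is conical, one can choose a word geodesic ray $\sigma$ based at $o$ tending to $\alpha$ that contains infinitely many $(\eta,r)$-transition points $y_1, y_2, \ldots \to \alpha$ (for some fixed $\eta, r > 0$). Using the conical fellow-traveling property, for every $k$ and every fixed $x \in \Gamma$, once $n$ is large enough any word geodesic from $o$ (resp.\ from $x$) to $g_n$ passes within a uniformly bounded distance $D$ of each $y_1, \ldots, y_k$, so each $y_i$ lies close to a transition point of the corresponding geodesic; the same holds for $h_n$. Applying Theorem~\ref{Ancona1}, for any prescribed $\epsilon > 0$ one picks $R$ large enough that $G(u, g_n) \asymp G(u, y_k)\, G(y_k, g_n)$ (up to a multiplicative factor close to $1$, combining the Ancona inequality with the trivial lower bound and the standard Harnack inequality for a finitely supported walk) for $u \in \{o, x\}$, uniformly in large $n$, and similarly with $h_n$ in place of $g_n$. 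Dividing the estimate for $u = x$ by the one for $u = o$ cancels the factor $G(y_k, g_n)$, so $K_{g_n}(x)$ lies within a bounded multiplicative error of $K_{y_k}(x)$, and the same is true for $K_{h_n}(x)$.

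To upgrade these bounded-ratio estimates into actual convergence $K_{g_n}(x)/K_{h_n}(x) \to 1$, the plan is to iterate the Ancona factorization across the successive transition points $y_1, y_2, \ldots, y_k$. This is the classical Ancona contraction: each successive application of Theorem~\ref{Ancona1} at $y_i$ acts as a strict contraction (in the Hilbert metric) on the cone of positive functions harmonic past $y_i$, with rate $\rho < 1$ depending only on $\epsilon$ and $R$. The compounded multiplicative deviation of $K_{g_n}(x)/K_{h_n}(x)$ from $1$ is therefore at most $\rho^k$, uniformly in large $n$. This geometric contraction step is the heart of the matter and the main technical obstacle; in the relatively hyperbolic setting it proceeds exactly as in the hyperbolic case because the transition-point hypothesis along the geodesic is precisely what allows Theorem~\ref{Ancona1} to be invoked at each $y_i$, with constants independent of $i$ and $n$. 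Sending $n \to \infty$ first and then $k \to \infty$ yields $\lim_n K_{g_n}(x) = \lim_n K_{h_n}(x)$ for every $x$, completing the proof.
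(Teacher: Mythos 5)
The paper does not actually prove this proposition: it is imported verbatim from \cite[Corollary~6.13]{GGPY} and used as a black box, so there is no internal argument to compare against. Your sketch is, as far as I can tell, a faithful reconstruction of the strategy used in that reference (and, ultimately, in Ancona's original work): reduce to showing that any two sequences tending to $\alpha$ in the Bowditch compactification have the same Martin limit, factor the Green's function at a chain of transition points on a ray toward $\alpha$ via Theorem~\ref{Ancona1}, and upgrade the bounded-ratio estimates to convergence by iterated contraction in the Hilbert metric. Two steps you treat as routine deserve flagging. First, to invoke Theorem~\ref{Ancona1} for geodesics from $x$ to $g_n$ (not just from $o$ to $g_n$) you need each $y_i$ to lie within bounded distance of a transition point of \emph{those} geodesics; fellow-traveling alone does not give this. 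It is the stability of transition points under bounded perturbation of endpoints, a relatively-hyperbolic-specific input from \cite{GePoCrelle} (in the spirit of Proposition~\ref{Floydgeo}), not something inherited "exactly as in the hyperbolic case." Second, the uniform contraction rate $\rho<1$ requires the two-sided estimate $c\,G(u,y_i)G(y_i,g_n)\leq G(u,g_n)\leq C\,G(u,y_i)G(y_i,g_n)$ with constants independent of $i$ and $n$ (lower bound from the Markov property and Harnack, upper bound from Theorem~\ref{Ancona1} plus a first-visit decomposition), together with the standard lemma that a positive kernel whose image cone has uniformly bounded Hilbert diameter contracts that metric at a uniform rate. You correctly identify this as the heart of the matter, but it is asserted rather than carried out; with those two points supplied, the argument is complete and matches the proof of the cited result.
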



In the case of a free product, the analog of conical limit points are infinite words.
In \cite{Dussaule}, the author also proves Proposition~\ref{Martinboundaryconicalpoints} in this context but the proof is simpler and slightly differs from the proofs in \cite{GGPY}.
In both cases, the key point is that the random walk tracks relative geodesics when it converges to conical limit points.
This idea is encoded in relative Ancona inequalities in \cite{GGPY} (see Theorem~\ref{Ancona1}) and in the use of transitional sets in \cite{Dussaule}.

\subsection{Convergence in parabolic subgroups}\label{Sectionconvergenceparabolic}
In this section, we prove the convergence of the Martin kernels $K_{g_n}(\cdot)$ when $(g_n)$ converges in the boundary of a coset of a parabolic subgroup.
We begin by fixing notations.
Recall that we fixed a finite set $\Omega_0$ of conjugacy classes of parabolic subgroups such that every maximal parabolic subgroup is conjugated to one in $\Omega_0$.
Fix a parabolic subgroup $P\in \Omega_0$ and
assume that $(g_n)$ is such that the closest projection $\pi_{P}g_n$ of $g_n$ in $P$ converges to a point in $\partial P$.
We first recall the definition of the geometric boundary of $P$ from Section~\ref{Sectioncompactification}.

Parabolic subgroups are assumed to be finitely generated virtually abelian groups.
Denote by $k$ the rank of $P$, that is, assume that $P$ contains a subgroup isomorphic to $\Z^k$ with finite index.
Any section $P/\Z^k\rightarrow P$ provides an identification between $P$ and $\Z^k\times \{1,...,N\}$.

Identify then $g_n$ with $(z_n,j_n)\in \Z^k\times \{1,...,N\}$.
By definition, the sequence $(g_n)$ converges to a point in the boundary $\partial P$ of $P$ if $z_n$ tends to infinity and $\frac{z_n}{\|z_n\|}$ converges to some point in $\Ss^{k-1}$.
Denote the corresponding point in $\Ss^{k-1}$ by $\theta$ and say that $(g_n)$ converges to $\theta$.

\begin{proposition}\label{propconvergencehorospheres}
If $(g_{n})$ is a sequence such that $(\pi_{P}g_{n})$ converges to a point of $\partial P$, then $(g_{n})$ and $(\pi_{P}g_{n})$ both converge in the Martin boundary to the same point.
\end{proposition}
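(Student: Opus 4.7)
The plan is to identify the random walk with a $\Z^k$-invariant chain on $\Z^k\times\N$ and apply Corollary~\ref{markovsummary} to its first-return chain to a suitable thickening of $P$. By Lemma~\ref{Spitzertrick} I may assume $\mu(e)>0$, so the walk is strongly irreducible. Fix a finite-index subgroup $\Z^k\leq P$ and enumerate the left $\Z^k$-cosets of $\Gamma$ by representatives $(h_i)_{i\in\N}$ chosen so that for every $R>0$ the cosets meeting the $R$-neighborhood $N_R(P)$ of $P$ in $C_S\Gamma$ form a finite initial segment. The bijection $zh_i\mapsto(z,i)$ identifies $\Gamma$ with $\Z^k\times\N$ in a $\Z^k$-equivariant way; under this identification the random walk becomes a finitely supported, strongly irreducible, $\Z^k$-invariant chain $p$, and by Lemma~\ref{sameGreenDussaule} the first-return chain $p_N$ to $\Z^k\times\{1,\dots,N\}$ corresponds to first return to a thickening of $P$ contained in $N_{R(N)}(P)$ with $R(N)\to\infty$.

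Next I verify the hypotheses of Corollary~\ref{markovsummary}. Strict sub-Markovity of $p_N$ for large $N$ follows from transience of the walk combined with its almost sure escape to the Floyd boundary, which forces a positive probability of avoiding any given thickening after finite time. The main technical point is the exponential-moments condition: a first-return trajectory from $(z,j)$ to $(z',j')$ lifts to a path in $\Gamma$ from $g=zh_j$ to $g'=z'h_{j'}$ leaving $N_{R(N)}(P)$, so any intermediate point $y$ sits at distance at least $R(N)$ from $P$. Since the endpoints lie a bounded distance from $P$, Proposition~\ref{Floydgeo} together with relative-hyperbolic control of geodesics near parabolic subgroups forces $\delta^f_y(g,g')$ to exceed a uniform positive constant. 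Iterating Theorem~\ref{Ancona-Floyd} along a sequence of such deep points at bounded spacing should then yield
\[p_N\big((z,j),(z',j')\big)\leq C\,\eta(N)^{c\|z'-z\|},\]
with $\eta(N)\to 0$. This produces exponential moments of any prescribed order for $N$ large, matching the hypothesis of Proposition~\ref{inducedchainassumption1}.

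With both hypotheses checked, Corollary~\ref{markovsummary} shows that any sequence $(z_n,j_n)\in\Z^k\times\N$ with $\sup_n j_n<\infty$ and $z_n/\|z_n\|$ convergent converges in the Martin boundary of $p$, with a limit determined by the direction. Applied to $(\pi_Pg_n)\subset P$, which under the chosen enumeration lives in $\Z^k\times\{1,\dots,N_0\}$ for some $N_0$, this gives convergence of $\pi_Pg_n$ in the Martin boundary. To pass from $\pi_Pg_n$ to $g_n$ itself, I invoke the relative Ancona inequality (Theorem~\ref{Ancona1}) along a geodesic from $g$ to $g_n$, which passes within bounded distance of $\pi_Pg_n$ near a transition point; this gives
\[G(g,g_n)\asymp G(g,\pi_Pg_n)\,G(\pi_Pg_n,g_n)\]
uniformly in $g$, and dividing by $G(o,g_n)$ cancels the $g$-independent factor $G(\pi_Pg_n,g_n)$, so $K_{g_n}$ and $K_{\pi_Pg_n}$ share the same pointwise limit on $\Gamma$. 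The main obstacle is the exponential-moments estimate: the rate of decay must be made arbitrarily large as $N\to\infty$, which requires careful iteration of Floyd--Ancona estimates along paths that penetrate deeply into the non-parabolic part of $\Gamma$; the sub-Markov condition and the final reduction from $g_n$ to $\pi_Pg_n$ are comparatively routine consequences of relative Ancona inequalities.
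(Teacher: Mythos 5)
Your overall architecture matches the paper's: identify $\Gamma$ with $\Z^k\times\N$, pass to the first-return chain on a thickening of $P$, verify the hypotheses of Corollary~\ref{markovsummary}, and then use Ancona-type inequalities to transfer convergence from $(\pi_Pg_n)$ to $(g_n)$. However, the step you yourself single out as the crux --- the exponential moments estimate --- is attempted with the wrong tool. You claim that an intermediate point $y$ of a first-return trajectory, lying at distance at least $R(N)$ from $P$, satisfies $\delta^f_y(g,g')>\delta$ for a uniform $\delta>0$, where $g,g'$ are the near-$P$ endpoints. This is backwards: by quasiconvexity of $P$ a geodesic from $g$ to $g'$ stays in a bounded neighborhood of $P$, hence at distance at least $R(N)-O(1)$ from $y$, and since the Floyd function is summable the Floyd length of that geodesic seen from $y$ tends to $0$ as $R(N)\to\infty$. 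So $\delta^f_y(g,g')$ is \emph{small}, not bounded below, and Theorem~\ref{Ancona-Floyd} gives nothing. More fundamentally, Ancona--Floyd inequalities bound the cost of \emph{avoiding} a neighborhood of a point that every efficient path must pass near; here you need to bound the probability of paths that \emph{do} go deep, which is a different kind of estimate, and no iteration of Theorem~\ref{Ancona-Floyd} produces the bound $C\,\eta(N)^{c\|z'-z\|}$. The paper's argument is geometric plus spectral: by Lemma~\ref{lemmacontraction2} (closest-point projections to $P$ of geodesics far from $P$ have uniformly bounded diameter), any path from $g$ to $g'$ staying outside $N_\eta(P)$ has word length at least $c\,\rho(\eta/3)\,\|z'-z\|$ with $\rho(\eta/3)\to\infty$, and Kesten's bound $\mu^{*n}(\cdot)\le \mathrm{e}^{-\alpha n}$ then gives exponential decay of the first-return kernel with rate that grows with $\eta$. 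Your proposal contains no lower bound on the length of deep excursions, which is exactly what makes the rate arbitrarily large.

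A secondary gap: in the final reduction you assert $G(g,g_n)\asymp G(g,\pi_Pg_n)\,G(\pi_Pg_n,g_n)$ uniformly in $g$ and conclude that $K_{g_n}$ and $K_{\pi_Pg_n}$ have the same pointwise limit. A two-sided bound with fixed multiplicative constants only shows the limit points of $K_{g_n}(g)$ are comparable to $K_{\alpha}(g)$, not equal to it. To get actual convergence one needs the $(1\pm\epsilon)$ form for every $\epsilon$: choose $\eta=\eta(\epsilon)$ via Theorem~\ref{Ancona-Floyd} (using the uniform Floyd separation $\delta^f_{z_n}(x,g_n)>\delta$ from \cite[Lemma~8.2]{GGPY}) so that paths missing $B_\eta(z_n)$ contribute at most $\epsilon G(\cdot,g_n)$, decompose $G(\cdot,g_n)$ over the last visit to $B_\eta(z_n)$, and use that $G(x,u_n)/G(o,u_n)\to K_\alpha(x)$ \emph{uniformly} over $u_n\in B_\eta(z_n)$ --- which is why Proposition~\ref{propconvergenceneighborhoodhorospheres} must be proved for all sequences in a bounded neighborhood of $P$, not only for $(\pi_Pg_n)$ itself. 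This is the paper's route and it closes the gap; with the constants-only version of Ancona the argument does not terminate.
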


Since a sequence $(g_n)$ converges in $g\partial P$ if and only if $(g^{-1}g_n)$ converges in $\partial P$ and since
$(g_n)$ converges in the Martin boundary if and only if $(g^{-1}g_n)$ converges in the Martin boundary, the same result will hold for any coset $gP$ of $P$.

To prove this proposition, we will adapt the arguments given in \cite{Dussaule} to our situation.
We will follow the same strategy as in Section~5 of \cite{Dussaule}.
Namely, we will first prove the proposition assuming $(g_n)$ stays in a fixed neighborhood of $P$.
Finally, we will use the relative Ancona inequalities to reduce to the case where $(g_n)$ does stay in such a neighborhood.

\begin{proposition}\label{propconvergenceneighborhoodhorospheres}
Suppose $(g_n)$ is a sequence with $\sup_{n}d(g_{n},P)<\infty$. Then $(g_{n})$ converges to a point in the Martin boundary $\partial_{\mu}\Gamma$ if and only if $(\pi_{P}g_{n})$ converges to a point of $\partial P$ and in that case, the limit of $(g_n)$ and $(\pi_Pg_n)$ in the Martin boundary are the same.
\end{proposition}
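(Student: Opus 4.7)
The plan is to apply Corollary~\ref{markovsummary} to the first-return chain of the random walk on a sufficiently large neighborhood of $P$, then transfer the resulting Martin boundary description back to $\mu$ via Lemma~\ref{sameGreenDussaule}. By Lemma~\ref{Spitzertrick} I may assume $\mu(e)>0$, so that $\mu$ is strongly irreducible. Fix a finite-index subgroup $\Z^k\subset P$ and a fundamental domain $F\subset\Gamma$ for the left $\Z^k$-action, chosen so that for each $N\geq 1$ the $N$-neighborhood $P_N$ of $P$ in the Cayley graph of $\Gamma$ is a $\Z^k$-invariant union of finitely many orbits. This identifies $\Gamma$ with $\Z^k\times\N$ as a $\Z^k$-set, with $P_N$ corresponding to $\Z^k\times\{1,\dots,N'\}$, and turns $\mu$ into a $\Z^k$-invariant, finitely supported, strongly irreducible transition kernel $p$ on $\Z^k\times\N$. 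The hypothesis $\sup_n d(g_n,P)<\infty$ places the sequence $(g_n)$ in a fixed $\Z^k\times\{1,\dots,M'\}$ corresponding to some $P_M$.

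The first task is to verify the hypotheses of Corollary~\ref{markovsummary} for $p$. The $\Z^k$-invariance, finite support and strong irreducibility are immediate from the construction. For the strict sub-Markov property (condition (a)) of the induced chain $p_N$ for large $N$, I would observe that the random walk converges almost surely in the Bowditch compactification, and its hitting measure charges conical limit points outside $\partial P$; thus from any $\gamma\in P_N$ there is positive probability of never returning to $P_N$, whence $\sum_{\gamma'}p_N(\gamma,\gamma')<1$.

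The main technical step is condition (b): for every $M_0>0$, the induced chain $p_N$ has exponential moments up to $M_0$ once $N$ is sufficiently large. For $\gamma'=(z',j')\in P_N$, the first-return probability $p_N((0,j),\gamma')$ is bounded by $G((0,j),\gamma';P_N^c)$, the Green's function conditioned on avoiding $P_N$ strictly between the endpoints. The key input is the Floyd-refined Ancona inequality (Theorem~\ref{Ancona-Floyd}): along a word geodesic from $(0,j)$ to $\gamma'$, one isolates $(\eta,r)$-transition points, which by Proposition~\ref{Floydgeo} have Floyd distance bounded below by a universal $\delta>0$; Theorem~\ref{Ancona-Floyd} then multiplies $G$ by a factor $\epsilon=\epsilon(R)<1$ each time the trajectory is forced to avoid a ball of radius $R$ around such a transition point. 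Iterating along a family of disjoint bottleneck balls whose union contains the forbidden region $P_N$, and choosing $R$ large (forcing $N$ large), yields $G((0,j),\gamma';P_N^c)\leq C(N)e^{-\lambda(N)\|z'\|}$ with $\lambda(N)\to\infty$ as $N$ grows. Summing over $\gamma'$ using $\Z^k$-invariance and finiteness of the $\{1,\dots,N'\}$-coordinate then delivers exponential moments up to $M_0$ once $\lambda(N)>M_0$. This is the principal obstacle, and it is essentially the only point at which relative hyperbolicity enters in a nontrivial way: naive Ancona decay in the word metric is insufficient, and the Floyd refinement is precisely what lets the decay rate be made arbitrarily large by enlarging $N$.

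With both hypotheses verified, Corollary~\ref{markovsummary} says that $(g_n)=(z_n,j_n)$ in the fixed $\Z^k\times\{1,\dots,M'\}$ converges in the Martin boundary of $p_M$ if and only if $\|z_n\|\to\infty$ and $z_n/\|z_n\|$ converges on $\Ss^{k-1}$, with limit independent of the $j_n$-coordinate. Lemma~\ref{sameGreenDussaule} identifies the Green's function of $p_M$ on $P_M$ with the restriction of $G$, so the Martin kernels of $p_M$ and of $\mu$ agree on $P_M$ up to a normalization independent of $n$; convergence in the two Martin boundaries is therefore equivalent for sequences in $P_M$. Finally, $\pi_P g_n$ lies at uniformly bounded distance from $g_n$, so their $\Z^k$-projections agree up to bounded error and converge on $\Ss^{k-1}$ to the same point (if at all). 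This shows that $(g_n)$ and $(\pi_P g_n)$ converge in the Martin boundary of $\mu$ simultaneously, precisely when $(\pi_P g_n)$ converges in $\partial P$, and to the same limit.
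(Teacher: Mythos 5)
Your overall architecture --- reduce to the first-return chain on a large neighborhood $N_\eta(P)$, verify the hypotheses of Corollary~\ref{markovsummary}, and transfer back via Lemma~\ref{sameGreenDussaule} --- is exactly the paper's, and your treatment of strong irreducibility and of the strict sub-Markov property (your condition (a)) is correct. The gap is in your verification of condition (b), the exponential moments of the induced chain. You propose to bound the first-return kernel $G((0,j),(z',j');N_\eta(P)^c)$ by iterating the Floyd--Ancona inequality (Theorem~\ref{Ancona-Floyd}) at $(\eta,r)$-transition points of a word geodesic from $(0,j)$ to $(z',j')$. But both endpoints lie in a bounded neighborhood of the coset $P$, so by quasiconvexity the whole geodesic between them is deep in that neighborhood: it has essentially no transition points, and at its deep points $w$ the Floyd distance $\delta^f_w(x,y)$ is not bounded below (the limit set of $P$ in the Floyd boundary degenerates to a point, which is precisely why Ancona-type inequalities, and hence triviality of the Martin fiber, fail over parabolic points). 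So there is no family of ``bottleneck balls'' to which Theorem~\ref{Ancona-Floyd} applies, and no mechanism producing a decay rate $\lambda(N)\to\infty$; the claimed estimate $G(\cdot,\cdot;P_N^c)\leq C(N)\mathrm{e}^{-\lambda(N)\|z'\|}$ is unsupported as stated.

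The paper's proof of Proposition~\ref{exponentialmoments} is of a different nature and does not use Ancona inequalities at all. One shows that any $\mu$-path from $(0,j)$ to $(z,j')$ whose intermediate points avoid $N_\eta(P)$ must have at least $R_0(\eta)\|z\|$ steps, with $R_0(\eta)\to\infty$ as $\eta\to\infty$: the closest-point projection to $P$ is coarsely Lipschitz (Lemma~\ref{lemmacontraction1}), while segments lying far from $P$ have uniformly bounded projection to $P$ (Lemma~\ref{lemmacontraction2}, from \cite{GePoCrelle}), so moving the projection across $P$ by a distance comparable to $\|z\|$ forces the excursion outside $N_\eta(P)$ to be very long. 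Combining this lower bound on the number of steps with Kesten's spectral bound $\mu^{*n}(g)\leq \mathrm{e}^{-\alpha n}$ (Lemma~\ref{lemmakesten}, from nonamenability of $\Gamma$) gives $p_{j,j'}(0,z)\leq C\mathrm{e}^{-\alpha R_0(\eta)\|z\|}$, and choosing $\eta$ with $\alpha R_0(\eta)>M_0$ yields the required moments. You would need to replace your Ancona-based step with an argument of this kind for the proof to go through; the remainder of your write-up then assembles correctly.
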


Let $\eta\geq 0$.
We denote the $\eta$-neighborhood of $P$ by $N_{\eta}(P)$.
To prove this proposition, we introduce the chain $p$ corresponding to the first return to $N_{\eta}(P)$, defined as in Section~\ref{SectionMartinboundariesofrandomwalks}, that is,
for $\gamma_1,\gamma_2\in N_{\eta}(P)$ denote by $p(\gamma_1,\gamma_2)$ the probability that the $\mu$-random walk starting at $\gamma_1$ returns to $N_{\eta}(P)$, and first does so at $\gamma_2$.
In other words 
$p(\gamma_{1},\gamma_{2})=G(\gamma_{1},\gamma_{2};N_{\eta}^{c}(P))$.
We will see that the probability that the $\mu$-random walk starting at $\gamma_1$ never goes back to $N_{\eta}(P)$ is positive (see Lemma~\ref{lemmasubMarkovneighborhoodhorosphere}).
Thus, $p$ is not a probability transition kernel
and defines a sub-Markov chain on $N_{\eta}(P)$.
Nevertheless, one can still define the Green's function associated to $p$ as
$$G_p(\gamma_1,\gamma_2)=\sum_{n\geq 0}p^{(n)}(\gamma_1,\gamma_2), \gamma_1,\gamma_2\in  N_{\eta}(P),$$
where $p^{(n)}$ is the $n$th power of convolution of $p$.

\begin{lemma}\label{sameGreen}
The Green's function $G_p$ coincides with the restriction to $N_{\eta}(P)$ of the Green's function $G_{\mu}$ associated to the initial random walk.
\end{lemma}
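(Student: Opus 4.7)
The plan is to mirror the combinatorial argument already carried out in Lemma~\ref{sameGreenDussaule}. The statement there treats the first-return kernel from $\Z^k\times \N$ to $\Z^k\times\{1,\dots,N\}$; here we first-return from $\Gamma$ to $N_\eta(P)$, but the state space plays no role in the proof, only the structure of the first-return kernel does. The core idea is a bijection between trajectories of the $\mu$-random walk between two points of $N_\eta(P)$ and sequences of ``excursions'' outside $N_\eta(P)$, each excursion being a single step of~$p$.

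First I would fix $\gamma_1,\gamma_2 \in N_\eta(P)$ and expand
\[
G_p(\gamma_1,\gamma_2) \;=\; \sum_{n\geq 0}\; \sum_{h_1,\dots,h_{n-1}\in N_\eta(P)} p(\gamma_1,h_1)\,p(h_1,h_2)\cdots p(h_{n-1},\gamma_2),
\]
with the convention $p^{(0)}(\gamma_1,\gamma_2)=\delta_{\gamma_1,\gamma_2}$. Next I would substitute the definition of $p$ as a first-return kernel,
\[
p(h,h') \;=\; \sum_{m\geq 1}\; \sum_{\substack{x_1,\dots,x_{m-1}\in \Gamma\\ x_i\notin N_\eta(P)}} \mu(h^{-1}x_1)\,\mu(x_1^{-1}x_2)\cdots \mu(x_{m-1}^{-1}h'),
\]
(with the $m=1$ term being $\mu(h^{-1}h')$). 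After substitution, each summand on the right-hand side of the expansion of $G_p(\gamma_1,\gamma_2)$ corresponds to a unique finite trajectory $\gamma_1=y_0,y_1,\dots,y_N=\gamma_2$ of the $\mu$-random walk, where the intermediate landmarks $h_1,\dots,h_{n-1}$ record exactly the visits of this trajectory to $N_\eta(P)$ between times $0$ and $N$.

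The key step is to check that this correspondence is a bijection: given any trajectory $y_0,\dots,y_N$ from $\gamma_1$ to $\gamma_2$ in $\Gamma$, there is a uniquely determined subsequence of times at which $y_i\in N_\eta(P)$, and these times split the trajectory into excursions each of which contributes to a single factor $p(h_i,h_{i+1})$. Summing over all trajectories then gives
\[
G_p(\gamma_1,\gamma_2) \;=\; \sum_{N\geq 0} \mu^{*N}(\gamma_1^{-1}\gamma_2) \;=\; G_\mu(\gamma_1,\gamma_2).
\]
There is no real obstacle here; the only subtlety is bookkeeping around the endpoints, namely that the excursion defining $p(h,h')$ must not revisit $N_\eta(P)$ strictly between its two endpoints but is allowed (indeed required) to begin and end there, which matches the convention $p(\gamma_1,\gamma_2)=G(\gamma_1,\gamma_2;N_\eta^c(P))$ used in the definition. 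I also would note, as justification for the formula, that the probability of never returning to $N_\eta(P)$ is positive (so $p$ is strictly sub-Markov), which is the content of Lemma~\ref{lemmasubMarkovneighborhoodhorosphere} and is consistent with the finiteness of $G_p$ inherited from the transience of $G_\mu$.
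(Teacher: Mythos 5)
Your proof is correct and follows essentially the same route as the paper, which simply invokes Lemma~\ref{sameGreenDussaule} and recalls the same trajectory decomposition: every $\mu$-trajectory between two points of $N_{\eta}(P)$ splits uniquely into excursions outside $N_{\eta}(P)$, each contributing one step of $p$, and summing over all trajectories identifies the two Green's functions. You have merely written out explicitly the bijection that the paper leaves as a reference.
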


\begin{proof}
This is given by Lemma~\ref{sameGreenDussaule}.
Recall that the proof is based on the following.
Every trajectory from $\gamma$ to $\gamma'$ for the random walk defines a trajectory from $\gamma$ to $\gamma'$ for $p$, excluding every point of the path that is not in $N_{\eta}(P)$
and every trajectory for $p$ is obtained in such a way.
Summing over all trajectories, the two Green's functions coincide.
\end{proof}

We also have the following property.

\begin{lemma}\label{strongirreducibility}
The chain $p$ is strongly irreducible.
\end{lemma}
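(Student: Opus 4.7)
The plan is to deduce strong irreducibility of $p$ from strong irreducibility of the underlying random walk, combined with the existence of self-loops in the induced chain. By the reduction at the start of the section (Lemma~\ref{Spitzertrick}) we may assume $\mu(e) > 0$, so in particular the walk has a positive probability of staying at any $\gamma \in N_\eta(P)$ in a single step; since such a step is automatically a first return to $N_\eta(P)$ at $\gamma$, it contributes to $p(\gamma,\gamma)$, giving $p(\gamma,\gamma) \geq \mu(e) > 0$ for every $\gamma \in N_\eta(P)$.

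First I would establish mere irreducibility of $p$. Given $\gamma_1, \gamma_2 \in N_\eta(P)$, strong irreducibility of $\mu$ provides a positive-weight sample path $\gamma_1 = x_0, x_1, \ldots, x_m = \gamma_2$ in $\Gamma$. Enumerate its visits to $N_\eta(P)$ by $0 = t_0 < t_1 < \cdots < t_\ell = m$; each excursion from $x_{t_{i-1}}$ to $x_{t_i}$ through points lying outside $N_\eta(P)$ contributes a positive term to $p(x_{t_{i-1}}, x_{t_i})$, exactly as in the defining sum recalled in Section~\ref{Sectionthickenedperipheralgroups} and used in the proof of Lemma~\ref{sameGreen}. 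Multiplying these $\ell$ positive contributions yields $p^{(\ell)}(\gamma_1,\gamma_2) > 0$ for some $\ell \geq 1$.

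Finally I would upgrade this to strong irreducibility by padding with self-loops at $\gamma_2$: for every $n \geq \ell$,
$$p^{(n)}(\gamma_1, \gamma_2) \;\geq\; p^{(\ell)}(\gamma_1,\gamma_2) \cdot p(\gamma_2,\gamma_2)^{n-\ell} \;>\; 0.$$
There is no serious obstacle here; the only point requiring care is the bookkeeping identification of the successive visits of a $\mu$-path to $N_\eta(P)$ with a sequence of single steps of the induced chain $p$, and this is routine once the defining sum of $p$ is unwound. The key substantive inputs, namely that $\mu$ is strongly irreducible and that $\mu(e) > 0$, are already in hand from the preamble to this subsection.
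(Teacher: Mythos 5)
Your proposal is correct and follows essentially the same route as the paper: deduce irreducibility of $p$ by decomposing a positive-weight $\mu$-path according to its successive visits to $N_{\eta}(P)$ (the same bookkeeping as in Lemma~\ref{sameGreen}), then upgrade to strong irreducibility using $\mu(e)>0$, which gives $p(\gamma,\gamma)>0$. Your write-up is merely more explicit about the self-loop padding step, which the paper leaves implicit.
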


\begin{proof}
The proof is based on the same idea as the proof of Lemma~\ref{sameGreen}.
First, the initial random walk is irreducible.
Now, every trajectory for $p$ comes from a trajectory for the random walk on the whole group, after excluding points that do not stay in the neighborhood of $P$.
Thus, there is a positive proportion (for $p$) of paths from any point $\gamma \in N_{\eta}(P)$ to any other point $\gamma'\in N_{\eta}(P)$.
This proves that $p$ is irreducible.
Now, recall that we assumed that $\mu(e)>0$ (see Lemma~\ref{Spitzertrick}), so that $p(\gamma,\gamma)>0$ and thus $p$ is strongly irreducible.
\end{proof}

In light of Lemma \ref{sameGreen}, to prove Proposition \ref{propconvergenceneighborhoodhorospheres}, it suffices to show that a sequence satisfying its conditions converges to a point in the Martin boundary of $N_{\eta}(P)$ with the induced process $p$ (for $\eta$ large enough).

We first notice that, as a set, $\Gamma$ can be identified $P$-equivariantly with $P\times \N$.
Indeed, $P$ acts by left multiplication on $\Gamma$ and the quotient is countable.
We order elements in the quotient according to their distance to $P$.
It follows that the $\eta$-neighborhood $N_{\eta}(P)$ can be $P$-equivariantly identified with $P\times \{1,...,N\}$.
Moreover, if $\eta'\leq \eta$, the set $P\times \{1,...,N'\}$ identified with $N_{\eta'}(P)$ is a subset of $P\times \{1,...,N\}$ identified with $N_{\eta}(P)$.

Now, identifying $P$ with $\Z^k\times F$, where $F$ is finite,
the group $\Gamma$ can be identified with $\Z^k\times \mathbb{N}$.
Thus, the $\mu$-random walk can be considered
as a $\Z^{k}$-invariant Markov chain $q$ on $\Z^{k}\times \N$ and the restriction of the random walk to $N_{\eta}(P)$ coincides with the restriction of the chain $q$ to $\Z^k\times \{1,...,N\}$.

Let $(\gamma_n)$ be a sequence in $N_{\eta}(P)$ and identify $\gamma_n$ with $(z_n,j_n)\in \Z^k\times \{1,...,N\}$.
Notice that the projection of $(\gamma_n)$ to $P$ converges in the geometric boundary $\partial P$ of $P$ if and only if $((z_n,j_n))$ converges in the geometric boundary of $\Z^k\times \{1,...,N\}$, since in both cases, the sequence converges in the geometric boundary if and only if $(z_n)$ tends to infinity and $\frac{z_n}{\|z_n\|}$ converges to a point in the sphere.

To prove Proposition \ref{propconvergenceneighborhoodhorospheres}, it suffices to show that the Markov chain $q$ on $\Z^{k}\times \N$ and its induced chain $p$ on $\Z^{k}\times \{1,..,N\}$  satisfy the conditions of Corollary \ref{markovsummary}. 
Thus, we just need to  show that for large enough $\eta$, the induced chain on $N_{\eta}(P)$ has sufficiently large exponential moments
and is strictly sub-Markov.

\begin{lemma}\label{lemmasubMarkovneighborhoodhorosphere}
The induced chain $p$ is strictly sub-Markov.
\end{lemma}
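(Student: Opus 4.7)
The plan is to show that there exists some $\gamma_{1}\in N_{\eta}(P)$ from which the total outgoing mass $\sum_{\gamma_{2}\in N_{\eta}(P)}p(\gamma_{1},\gamma_{2})$ is strictly less than $1$; combined with the strong irreducibility of $p$ from Lemma~\ref{strongirreducibility}, this suffices to force the Perron--Frobenius eigenvalue of $F(0)$ to be strictly less than $1$ as required in Lemma~\ref{inducedchainassumption2}. Since this mass equals the probability that the $\mu$-walk launched at $\gamma_{1}$ ever re-enters $N_{\eta}(P)$ at a positive time, the problem reduces to producing a trajectory that visits $N_{\eta}(P)$ only finitely often with positive probability.

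I would take $\gamma_{1}=e\in P\subseteq N_{\eta}(P)$ and rely on the standard fact that a finitely supported, irreducible random walk on a non-elementary relatively hyperbolic group converges almost surely to a point $X_{\infty}$ of the Bowditch boundary $\partial_{B}\Gamma$, obtainable via Karlsson's convergence theorem on the Floyd boundary followed by the Gerasimov Floyd map, or directly from \cite{GGPY}. Let $\nu$ be the law of $X_{\infty}$ started at $e$. The crucial step is to rule out $\nu=\delta_{p_{P}}$, where $p_{P}\in\partial_{B}\Gamma$ is the parabolic fixed point associated to $P$: the $\mu$-stationarity identity $\nu=\sum_{g}\mu(g)\,g_{\ast}\nu$ would then force every $g\in\mathrm{supp}(\mu)$ to satisfy $g\cdot p_{P}=p_{P}$, so $\mathrm{supp}(\mu)\subseteq \mathrm{Stab}_{\Gamma}(p_{P})=P$; but $\mathrm{supp}(\mu)$ generates $\Gamma$ as a semigroup while $\Gamma\neq P$ since the Bowditch action is non-elementary, a contradiction. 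Hence $\mathbb{P}(X_{\infty}\neq p_{P})>0$.

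Next, I would argue that every trajectory with $X_{\infty}\neq p_{P}$ must leave $N_{\eta}(P)$ for good after some time. Indeed, any sequence in $N_{\eta}(P)$ going to infinity in $\Gamma$ stays at bounded Cayley-distance from $P$, and $P$ accumulates only at $p_{P}$ in $\partial_{B}\Gamma$; a trajectory with infinitely many visits to $N_{\eta}(P)$ would therefore have $p_{P}$ among its Bowditch accumulation points, contradicting convergence to $X_{\infty}\neq p_{P}$. So with positive probability the walk visits $N_{\eta}(P)$ only finitely many times, and since the number of such visits is geometrically distributed with parameter $F$ (the first-return probability to $N_{\eta}(P)$), the event ``finitely many visits'' having positive probability forces $F<1$, completing the argument.

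The main obstacle I expect is the clean invocation of almost sure Bowditch convergence for our class of walks; while standard, it is not explicitly isolated earlier in the paper. A more self-contained substitute uses Theorem~\ref{Ancona1}: choosing a conical limit point $\xi\neq p_{P}$ and $h_{n}\to\xi$ along a word geodesic meeting $(\eta,r)$-transition points arbitrarily far from $P$, the relative Ancona inequality gives a uniformly positive probability that the walk passes through such a transition point, after which Green's function estimates on the complement of a sufficiently large neighborhood of $P$ bound the return probability to $N_{\eta}(P)$ away from $1$ explicitly.
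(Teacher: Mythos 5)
Your proof is correct and follows essentially the same route as the paper: both reduce the claim to showing that the walk escapes $N_{\eta}(P)$ permanently with positive probability, via almost sure convergence to the Bowditch boundary. The only difference is that the paper simply cites the almost sure convergence to a \emph{conical} limit point from \cite[Theorems~9.8 and 9.14]{GGPY} (which immediately excludes the parabolic point as the limit), whereas you recover the weaker but sufficient fact $\mathbb{P}(X_{\infty}\neq p_{P})>0$ from the stationarity of the hitting measure.
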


\begin{proof}
It suffices to show that there exists $\gamma \in N_{\eta}(P)$ such that
$$\sum_{\gamma'\in  N_{\eta}(P)}p(\gamma ,\gamma')<1.$$
This follows from the fact that the $\mu$-random walk starting at $\gamma$ with $d(\gamma,P)=\eta$ has a positive probability of never returning to $N_{\eta}(P)$.
This, in turn, follows from the fact that the random walk almost surely converges to a conical point (see for example \cite[Theorem~9.8, Theorem~9.14]{GGPY}).
\end{proof}



For $M\geq 0$, recall that $p$ is said to have exponential moments up to $M$ if for every $j, j'\in \{1,...,N\}$,
$$\sum_{z\in \mathbb{Z}^k}p_{j,j'}(0,z)\mathrm{e}^{M\|z\|}<+\infty.$$
\begin{proposition}\label{exponentialmoments}
Let $M\geq 0$.
For large enough ${\eta}$, $p$ has exponential moments up to $M$.
\end{proposition}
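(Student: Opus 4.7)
The plan is to bound the first-return probabilities $p(\gamma_1,\gamma_2)$ by combining a spectral-radius bound on the $\mu$-random walk with a geometric estimate forcing any path that avoids $N_\eta(P)$ in between to be long.

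First, since $\Gamma$ is nonelementary relatively hyperbolic, it contains a rank-$2$ free subgroup and hence is non-amenable; by Kesten's theorem, the convolution operator of $\mu$ on $\ell^2(\Gamma)$ has operator norm $\rho<1$, so that $\mu^{*n}(g)\le\rho^n$ for every $g\in\Gamma$ and $n\ge 0$. This will supply the exponential decay per walk step.

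The geometric heart of the argument is the following claim: for every $K>0$ there exists $\eta_0$ such that, for all $\eta\ge\eta_0$ and any Cayley path $(\gamma_1=y_0,y_1,\ldots,y_n=\gamma_2)$ with $\gamma_1,\gamma_2\in N_\eta(P)$ and $y_l\notin N_\eta(P)$ for $1\le l\le n-1$, one has $n\ge K\|z_2-z_1\|$, where $z_i$ denotes the $\Z^k$-coordinate of $\gamma_i$. I would establish this by showing that the closest-point projection $\pi_P\colon\Gamma\to P$ is exponentially contracting off $N_\eta(P)$: there exist $A>0$ and $\lambda\in(0,1)$ such that $d_P(\pi_Py,\pi_Py')\le A\lambda^\eta$ whenever $y,y'\notin N_\eta(P)$ and $d(y,y')$ is at most the support radius of $\mu$. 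This is the discrete counterpart of exponential horospherical-projection contraction in classical hyperbolic space, and in our setting follows from Proposition~\ref{Floydgeo} combined with the Floyd--Cayley comparison results of Gerasimov--Potyagailo, or equivalently from the $\Gamma$-action on the Gromov hyperbolic space provided by Yaman's theorem (using that horospheres are quasi-isometric to $\R^k$ for virtually abelian $P$). Telescoping along the path and absorbing the bounded contribution of the first and last steps (which leave and re-enter $N_\eta(P)$), one obtains $\|z_2-z_1\|\le nA\lambda^\eta+O(\eta)$, hence $n\ge c(\eta)\|z_2-z_1\|$ for some $c(\eta)\to\infty$.

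Combining the two ingredients, for $\eta$ so large that $c(\eta)|\log\rho|>M$,
\[
p(\gamma_1,\gamma_2)\le\sum_{n\ge c(\eta)\|z_2-z_1\|}\mu^{*n}(\gamma_1^{-1}\gamma_2)\le\frac{\rho^{c(\eta)\|z_2-z_1\|}}{1-\rho}.
\]
Since the number of $\gamma_2\in N_\eta(P)$ with $\|z_2-z_1\|=D$ is $O(D^{k-1})$, summing yields
\[
\sum_{\gamma_2}p(\gamma_1,\gamma_2)e^{M\|z_2-z_1\|}\le C\sum_{D\ge 0}D^{k-1}e^{D(M-c(\eta)|\log\rho|)}<\infty,
\]
and by $\Z^k$-invariance the bound is uniform in the starting point, showing that $p$ has exponential moments up to $M$. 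The main obstacle is the geometric projection-contraction estimate; once it is in hand, the spectral-radius bound and the polynomial-volume count combine routinely.
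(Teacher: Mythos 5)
Your overall architecture is the same as the paper's: Kesten's bound $\mu^{*n}(g)\le \rho^n$, a geometric estimate showing that any excursion outside $N_\eta(P)$ from $\gamma_1$ to $\gamma_2$ must have length at least $c(\eta)\|z_2-z_1\|$ with $c(\eta)\to\infty$, and a polynomial volume count to sum the series. The gap is in the mechanism you propose for the geometric estimate. The per-step claim $d(\pi_P y,\pi_P y')\le A\lambda^{\eta}$ for $y,y'$ at depth $\eta$ and at bounded distance from each other cannot hold in the discrete setting: the closest-point projection is only coarsely defined, the sets $\mathrm{proj}_P(y)$ have diameter bounded above by a uniform constant but not tending to zero with the depth, and $\pi_P$ selects points in them arbitrarily, so the per-step displacement of the projection is quantized and does not decay as $\eta\to\infty$. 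What is actually available (Lemma~\ref{lemmacontraction1}, and Proposition~8.5 of \cite{GePoCrelle} as used in Lemma~\ref{lemmacontraction2}) is a uniform bound $a_0$ on the projection of a single step, independent of depth; Proposition~\ref{Floydgeo}, which you invoke, gives lower bounds on Floyd distances at transition points and says nothing about a contraction \emph{rate}. Telescoping the true per-step bound therefore only yields $\|z_2-z_1\|\lesssim n\,a_0$, i.e.\ $n\gtrsim\|z_2-z_1\|/a_0$ with an $\eta$-independent constant, so you cannot arrange $c(\eta)|\log\rho|>M$ for an arbitrary prescribed $M$, and the moment bound fails exactly where it matters.

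The repair — which is what the paper does — is to telescope over blocks rather than single steps. Lemma~\ref{lemmacontraction2} says that for points at depth $>\eta$, moving the projection by the fixed definite amount $a_0$ costs at least $\rho(\eta)$ in path length, with $\rho(\eta)\ge\eta-D\to\infty$. Marking points $y_1,\dots,y_l$ along the excursion whose projections are spaced between $a_0$ and $2a_0$ produces $l\gtrsim\|z_2-z_1\|/a_0$ blocks, each of length at least $\rho(\eta/3)$, whence $n\gtrsim \rho(\eta/3)\|z_2-z_1\|/(2a_0)$. This replaces your (unavailable) exponential per-step contraction by an average contraction rate $2a_0/\rho(\eta/3)\to 0$, which is all that is needed; the remainder of your argument (Kesten's bound and the $O(D^{k-1})$ lattice-point count, together with the observation that the first and last steps leaving and re-entering $N_\eta(P)$ contribute only a bounded error) then goes through unchanged.
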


The proof of Proposition~\ref{exponentialmoments} will be divided into several steps.
We will use the following two results which are related to hyperbolic properties of the geometry of the Cayley graph $C_S\Gamma$. Every maximal parabolic subgroup $P$ of a relatively hyperbolic group  is quasiconvex (see for example \cite[Corollary~3.9]{GerasimovPotyagailo}) so we have the following.
\begin{lemma}\label{lemmacontraction1}\cite[Lemma 5.1]{Bowditch}
 There exists a constant $c_0$ such that the following holds.
If $g_1,g_2\in \Gamma$ and if $g'_i\in \mathrm{proj}_{P}g_{i}$
for $i=1,2$
then, 
$$d(g'_1,g'_2)\leq d(g_1,g_2) + c_0.$$
\end{lemma}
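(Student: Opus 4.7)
The plan is to exploit the quasiconvexity of $P$ in $C_S\Gamma$, which is the hypothesis recalled just before the statement (from \cite[Corollary~3.9]{GerasimovPotyagailo}). The underlying geometric principle is standard in the hyperbolic world: closest-point projection onto a quasiconvex subset of a Gromov hyperbolic space is coarsely $1$-Lipschitz. In the relatively hyperbolic setting the Cayley graph $C_S\Gamma$ need not be hyperbolic, so one cannot invoke thin quadrilaterals directly; the role of thinness is instead played by the Floyd--transition-point machinery of Proposition~\ref{Floydgeo}.

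Concretely, I would fix a word geodesic $\sigma$ from $g_1$ to $g_2$ and reduce the problem to locating points $a, b \in \sigma$ lying within a universal distance $D$ of $g'_1$ and $g'_2$ respectively. Once such $a$ and $b$ are found, the triangle inequality immediately yields
\begin{equation*}
d(g'_1, g'_2) \le d(g'_1, a) + d(a, b) + d(b, g'_2) \le D + d(g_1, g_2) + D,
\end{equation*}
since $a,b$ lie on a geodesic of length $d(g_1, g_2)$; setting $c_0 = 2D$ completes the argument. Thus the whole content is the existence of the fellow-travel points $a, b$.

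To produce $a$ and $b$, I would apply quasiconvexity of $P$ to the two projection segments $[g_i, g'_i]$ and then consider the geodesic triangles with vertices $g_1, g_2, g'_i$. A point of $\sigma$ sufficiently far from $[g_i, g'_i] \cup [g'_1, g'_2]$ would be an $(\epsilon, R)$-transition point with respect to both opposing sides, so Proposition~\ref{Floydgeo} would force the Floyd distance from this point to stay uniformly bounded below; combining the Floyd lower bounds across the two triangles and using the triangle inequality for the Floyd distance produces a contradiction unless $\sigma$ passes within bounded distance of $g'_1$ and $g'_2$. The main obstacle is precisely this step: controlling how $\sigma$ must enter and exit a bounded neighborhood of $P$ near the projections, which is a form of bounded coset penetration. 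In the framework of this paper, the Floyd deviation estimates of Gerasimov--Potyagailo are the correct substitute for the classical slim-triangle argument and make this control routine.
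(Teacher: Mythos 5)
The paper offers no proof of this statement: it is imported directly from \cite[Lemma~5.1]{Bowditch}, the sentence before it merely recalling that maximal parabolic subgroups are quasiconvex. So the only thing to assess is whether your argument stands on its own.

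It does not, because of a genuine gap at the very first reduction. You reduce the lemma to producing points $a,b$ on a word geodesic $\sigma$ from $g_1$ to $g_2$ with $d(a,g'_1)\leq D$ and $d(b,g'_2)\leq D$ for a universal $D$. Such points need not exist: take $g_1=g_2$ (or $d(g_1,g_2)$ small) with $d(g_1,P)$ arbitrarily large. Then $\sigma$ lies in a small ball far from $P$, while $g'_1,g'_2\in P$, so no point of $\sigma$ is anywhere near the projections; the lemma holds in this regime for a different reason, namely the contraction property already recorded as Lemma~\ref{lemmacontraction2} (Proposition~8.5 of \cite{GePoCrelle}), which bounds $\mathrm{diam}(\mathrm{proj}_P(\sigma))$ by $a_0$ whenever $\sigma$ avoids a fixed neighborhood of $P$. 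The correct argument is a dichotomy: if $\sigma$ stays outside $N_D(P)$, conclude directly from $d(g'_1,g'_2)<a_0$; otherwise let $a,b$ be the entry and exit points of $\sigma$ in $N_D(P)$ (these are within $D$ of $P$, not of $g'_1,g'_2$), apply the contraction property to the segments $[g_1,a]$ and $[b,g_2]$ to get $d(g'_1,\pi_P(a))\leq a_0+O(D)$ and $d(g'_2,\pi_P(b))\leq a_0+O(D)$, and only then run your triangle inequality through $\pi_P(a)$ and $\pi_P(b)$. A secondary problem is the mechanism you propose for locating $a,b$: a point of $\sigma$ far from $P$ is not thereby an $(\epsilon,R)$-transition point (transition points are defined relative to \emph{all} parabolic cosets), and ``combining Floyd lower bounds across the two triangles to get a contradiction'' is not carried out; Proposition~\ref{Floydgeo} gives lower bounds on Floyd distances seen from transition points, but no contradiction with the quadrilateral geometry is extracted from them. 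The Floyd machinery is indeed the right substitute for slim triangles here, but it enters through the proof of the contraction statement of Lemma~\ref{lemmacontraction2}, which is the tool your argument is missing.
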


\begin{lemma}\label{lemmacontraction2}
There exists an $a_{0}>0$ such that the function $\rho:\mathbb{R}_+\rightarrow \mathbb{R}_+$ defined by 
$$\rho(\eta)=\inf \{d(g_{1},g_{2}):d(\pi_{P}(g_{1}),\pi_{P}(g_{2}))\geq a_{0}, d(g_{i},P)>\eta\}$$ tends to infinity as $\eta \to \infty$.
\end{lemma}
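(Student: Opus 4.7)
The plan is to reduce the lemma to the strong-contracting property of parabolic cosets: there exist universal constants $R_0, C_0 > 0$ such that for any geodesic $\alpha$ in the Cayley graph of $\Gamma$ with $\alpha \cap N_{R_0}(P) = \emptyset$, the projection $\pi_P \alpha$ has diameter at most $C_0$. This property is classical for parabolic subgroups in relatively hyperbolic groups and can be derived from Proposition~\ref{Floydgeo}; I would take it as the main technical input.

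Granting it, fix any $a_0 > C_0$. For any $g_1, g_2 \in \Gamma$ with $d(g_i, P) > \eta$ and $d(\pi_P g_1, \pi_P g_2) \geq a_0$, choose a geodesic $\alpha$ from $g_1$ to $g_2$. By the contrapositive of the contracting property, $\alpha$ must enter $N_{R_0}(P)$: otherwise $\pi_P g_1, \pi_P g_2 \in \pi_P \alpha$ would lie at distance at most $C_0 < a_0$, contradicting the hypothesis. Pick any $p \in \alpha \cap N_{R_0}(P)$; then $d(p, g_i) \geq d(g_i, P) - d(p, P) > \eta - R_0$ for $i = 1, 2$, so
\[
d(g_1, g_2) = d(g_1, p) + d(p, g_2) > 2(\eta - R_0).
\]
Hence $\rho(\eta) \geq 2(\eta - R_0)$, which tends to infinity with $\eta$.

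The principal obstacle is the derivation of the strong-contracting property itself. One approach is to argue by contradiction using the dynamics on the Bowditch boundary: failure of the property would produce a sequence of geodesics avoiding deeper and deeper neighborhoods of $P$ whose projections to $P$ have diverging diameter. After normalizing by the action of $P$ on itself and extracting subsequences, pairs of endpoints of these projections would converge in the Bowditch boundary to a common limit $\xi$, which cannot equal the parabolic fixed point $p_P$ (because the normalized projections stay bounded) and so must be either a conical point or a parabolic fixed point of some $Q \neq P$. Both possibilities lead to contradictions---via the tracking of conical sequences by geodesic rays with bounded projection to $P$, and via the bounded coarse intersection of distinct parabolic cosets, respectively. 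Alternatively one can pass to the Groves--Manning cusped space, where $P$ appears as a horoball in a genuinely hyperbolic space and strong contraction of horoballs is classical hyperbolic geometry; the cusped space is $\Gamma$-equivariantly quasi-isometric to the Cayley graph outside the cusps, which allows the property to transfer back. In either case, the delicate point is the uniformity of the constants $R_0$ and $C_0$ across cosets and the adaptation to the Floyd-metric setup of \cite{GePoCrelle} when $P$ is only virtually cyclic.
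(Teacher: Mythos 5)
Your proof is correct and follows essentially the same route as the paper: the paper's proof also rests on the uniform strong-contraction property of parabolic cosets (quoting it as Proposition~8.5 of \cite{GePoCrelle}, with constants independent of the coset) and then deduces the lemma by forcing a geodesic between deep points with far-apart projections to enter a fixed neighborhood of $P$. Your version even yields the marginally sharper bound $\rho(\eta)\geq 2(\eta-R_0)$ by using the depth of both endpoints, whereas the paper argues the contrapositive from one endpoint and gets $\rho(\eta)\geq \eta-D$; the difference is immaterial.
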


\begin{proof}
By Proposition 8.5 of \cite{GePoCrelle}, there are constants $a_{0},D>0$ (independent of the parabolic subgroup $P$) such that if $\alpha$ is a geodesic with $d(\alpha,P)\geq D$, then $$diam(\mathrm{proj}_{P}(\alpha))<a_0.$$ 
Now, consider $g_{1},g_{2}\in \Gamma$ with $d(g_{1},P)>\eta$ and $d(g_{1},g_{2})\leq \eta-D$. Let $\alpha$ be a geodesic connecting $g_{1}$ and $g_2$. By the triangle inequality we have $d(\alpha,P)\geq D$ and so $$diam(\mathrm{proj}_{P}(\alpha))<a_0.$$ 
In particular $d(\pi_{P}(g_1),\pi_{P}(g_1))<a_0$. Thus, the conditions $d(g_{1},P)>\eta$ and $d(\pi_{P}(g_1),\pi_{P}(g_1))\geq a_0$ imply that $d(g_{1},g_{2})>\eta-D$, completing the proof, since $\eta$ tends to infinity.
\end{proof}


The following classical lemma is due to Kesten (see \cite{Kesten}). It holds as soon as $\Gamma$ is nonamenable.

\begin{lemma}[Kesten]\label{lemmakesten}
Denote by $\mu^{*n}$ the $n$th power of convolution of the measure $\mu$.
There exists $\alpha>0$ such that for every $g \in \Gamma$,
$$\mu^{*n}(g)\leq \mathrm{e}^{-\alpha n}.$$
\end{lemma}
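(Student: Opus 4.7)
My plan is to derive the lemma from the operator-theoretic form of Kesten's amenability criterion. Define the left-convolution operator $T_\mu : \ell^2(\Gamma) \to \ell^2(\Gamma)$ by $T_\mu f = \mu * f$. A direct computation shows $T_\mu^n \delta_e = \mu^{*n}$, so Cauchy--Schwarz and submultiplicativity of the operator norm give
\[
\mu^{*n}(g) \;=\; \langle T_\mu^n \delta_e,\, \delta_g\rangle \;\leq\; \|T_\mu^n\| \;\leq\; \|T_\mu\|^n,
\]
reducing the lemma to establishing $\|T_\mu\| < 1$, after which $\alpha := -\log \|T_\mu\| > 0$ works.

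To establish the norm bound, the strategy is to symmetrize. Using $T_\mu^* = T_{\check\mu}$ one obtains $\|T_\mu\|^2 = \|T_\mu^* T_\mu\| = \|T_{\check\mu * \mu}\|$, with $\check\mu * \mu$ a symmetric, finitely supported probability measure on $\Gamma$. Letting $H$ denote the subgroup of $\Gamma$ generated by $\mathrm{supp}(\check\mu * \mu) = S^{-1}S$ (where $S := \mathrm{supp}(\mu)$), left-convolution by $\check\mu * \mu$ preserves each right coset $Hc$ in the decomposition $\ell^2(\Gamma) = \bigoplus_c \ell^2(Hc)$ and acts on each summand unitarily equivalently to $T_{\check\mu * \mu}$ on $\ell^2(H)$; hence $\|T_{\check\mu * \mu}\|_{\ell^2(\Gamma)} = \|T_{\check\mu * \mu}\|_{\ell^2(H)}$, and the classical (symmetric) Kesten theorem yields that this quantity is strictly less than $1$ whenever $H$ is nonamenable.

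In our setting the reduction of Lemma~\ref{Spitzertrick} lets us assume $\mu(e) > 0$, which immediately gives $S \cup S^{-1} \subseteq S^{-1}S$ and hence $H = \Gamma$, sidestepping any subgroup subtleties. Nonamenability of $\Gamma$ itself is automatic, since any nonelementary relatively hyperbolic group contains a nonabelian free subgroup. The only potentially delicate point is the verification of nonamenability of $H$ in the absence of the $\mu(e) > 0$ reduction, which would require the standard cyclic-quotient argument (modulo $H$, every element of $S$ represents the same coset, so $\Gamma = \langle S\rangle_{\mathrm{sg}}$ surjects onto a cyclic, hence amenable, quotient, and extensions preserve amenability); with the $\mu(e)>0$ reduction in hand, this difficulty disappears entirely and the proof is routine.
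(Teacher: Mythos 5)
Your proof is correct and follows exactly the route the paper intends: the authors give no argument beyond citing Kesten and noting that nonamenability suffices, and your spectral-radius computation $\mu^{*n}(g)\leq\|T_\mu\|^n$ combined with the symmetrization $\|T_\mu\|^2=\|T_{\check\mu*\mu}\|$ and Kesten's amenability criterion is the standard way to fill that in. Your handling of the support subgroup is also consistent with the paper, since the reduction to $\mu(e)>0$ via Lemma~\ref{Spitzertrick} is indeed in force in the section where this lemma is used.
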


\medskip
We can now prove Proposition~\ref{exponentialmoments}.
Let $z\in \mathbb{Z}^k$ and $j,j'\in \{1,...,N\}$.
If the first return to $N_{\eta}(P)$ starting at $(0,j)$ is at $(z,j')$,
there is a path $Z_0,...,Z_{n+1}$ such that $Z_0=(0,j)$, $Z_{n+1}=(z,j')$ and $Z_l\notin N_{\eta}(P)$ for $1\leq l \leq n$.
Since $d(Z_l,Z_{l+1})\leq r(\mu)$, where $r(\mu)$ only depends on $\mu$, if ${\eta}\geq 3r(\mu)$, then $Z_0,Z_{n+1}\notin N_{2{\eta}/3}(P)$ as soon as $n\geq 1$, which will hold if $\|z\|$ is large enough.
Moreover, any geodesic from $Z_l$ to $Z_{l+1}$ stays outside of $N_{{\eta}/3}(P)$, for $0\leq l \leq n$.

Define a path $\phi$ from $Z_0$ to $Z_{n+1}$ by gluing together geodesics from $Z_l$ to $Z_{l+1}$.
Then, the length of $\phi$ is smaller or equal to $nr(\mu)$.
The parabolic subgroup $P$ together with the word distance is quasi-isometric to its subgroup $\Z^k$ together with the euclidean distance.
In particular, the word distance between $0$ and $z$ is larger than $\Lambda \|z\|$, where $\Lambda$ only depends on the quasi-isometry parameters.

Using Lemma~\ref{lemmacontraction1}, we can choose consecutive points $y_1,...,y_l$ on $\phi$ which project on $P$ on points $\tilde{y}_1,...,\tilde{y}_l$ such that the distance between $\tilde{y}_i$ and $\tilde{y}_{i+1}$ is between $a_0$ and $2a_0$, for $1\leq i \leq l-1$ and such that $l\geq  \frac{\Lambda \|z\|}{2a_0}$.
(where $a_0$ is the constant in Lemma~\ref{lemmacontraction2}).
By gluing together a path from $0$ to $\tilde{y}_1$, paths from $\tilde{y}_i$ to $\tilde{y}_{i+1}$ and a path fom $\tilde{y}_{l}$ to $z$, we get a path from $0$ to $z$ inside $P$ whose length is thus larger than $\Lambda \|z\|$.
We deduce from Lemma~\ref{lemmacontraction2} that $d(y_j,y_{j+1})\geq \rho(\eta/3)$, so that the length of $\phi$ is larger or equal to $\frac{\Lambda}{2a_0}\rho(\eta/3)\|z\|$,
where $\rho({\eta}/3)$ tends to infinity when ${\eta}$ tends to infinity.

Fix $R_0\geq 0$.
Then, for large enough ${\eta}$, $n\geq R_0\|z\|$.
Summing over all paths from $\gamma=(0,j)$ to $\gamma_z=(z,j')$ that stay outside $N_{\eta}(P)$, we have
$$p_{j,j'}(0,z)\leq \sum_{n\geq R_0\|z\|}\mu^{*n}(\gamma^{-1}\gamma_z).$$
Lemma~\ref{lemmakesten} shows that
$$p_{j,j'}(0,z)\leq \sum_{n\geq R_0\|z\|}\mathrm{e}^{-\alpha n}\leq \mathrm{e}^{-\alpha R_0 \|z\|}\sum_{n\geq 0}\mathrm{e}^{-\alpha n}.$$
To prove Proposition~\ref{exponentialmoments}, it suffices to choose $R_0$ so that $R_0\alpha>M$.
\qed

\medskip

We can now complete the proof of Proposition~\ref{propconvergenceneighborhoodhorospheres}, using Corollary~\ref{markovsummary}.
Indeed, Lemma \ref{strongirreducibility}  shows that the induced chain on (arbitrary) bounded neighborhoods of $P$ is strongly irreducible, while Lemma~\ref{lemmasubMarkovneighborhoodhorosphere} shows that it is strictly sub-Markov (Condition~a) of Corollary~\ref{markovsummary}) and  Proposition~\ref{exponentialmoments} shows that it has sufficiently high exponential moment (Condition~b) of Corollary~\ref{markovsummary}).

Thus, Corollary ~\ref{markovsummary} implies that the Martin compactification of the induced chain on bounded neighborhoods of $P$ coincides with the geometric compactification of $P$, and together with Lemma \ref{sameGreen} this implies 
Proposition~\ref{propconvergenceneighborhoodhorospheres}.
\qed

\medskip

To prove Proposition~\ref{propconvergencehorospheres}, we now show that we can reduce to the case of a sequence that stays in a uniform neighborhood of the parabolic subgroup $P$. The proof is based on the following strategy.
Assume that the sequence $(g_n)$ leaves every bounded neighborhood of $P$ (but its projections to $P$ still converge to a point $\theta \in \partial P$). Proposition~\ref{propconvergenceneighborhoodhorospheres} applied to $(\pi_{P}g_n)$ guarantees that $(\pi_{P}g_n)$ converges to a point $\alpha$ in the Martin boundary.
We want to prove that the same is true for $g_n$. In other words, we want to prove that $K_{g_n}$ converges pointwise to a function $K_{\alpha}$.
Relative Ancona inequalities show that to go from a basepoint $o$ or from an arbitraty point $g$ to $g_n$, the random walk visits $\pi_P(g_n)$ with high probability.
Thus, $G(g,g_n)$ is close to $G(g,\pi_P(g_n))G(\pi_P(g_n),g_n)$ and $G(o,g_n)$ is close to $G(o,\pi_P(g_n))G(\pi_P(g_n),g_n)$, so that
$K_{g_n}(g)$ is close to $K_{\pi_P(g_n)}(g)$. Convergence for $K_{g_n}(g)$ then follows from convergence for $K_{\pi_P(g_n)}(g)$.

We now give a formal proof.
Let $z_{n}=\pi_{P}g_{n}$ be a projection point of $g_n$ to $P$.
By Lemma~8.2 of \cite{GGPY}, there is a (uniform, depending only on $\Gamma$) $\delta>0$ with $\lim \inf_{n\to \infty} \delta^{f}_{z_{n}}(x,g_{n})>\delta$ for all $x\in \Gamma$.
Let $\epsilon>0$.
Consider any $x\in \Gamma$.
By Theorem \ref{Ancona-Floyd}, there is a $\eta>0$ such that for large enough $n$,
\begin{equation}\label{equationMartinparabolic1}
G(x,g_{n};B^{c}_{\eta}(z_n))\leq \epsilon G(x,g_n)
\end{equation}
and
\begin{equation}\label{equationMartinparabolic2}
G(o,g_{n};B^{c}_{\eta}(z_n))\leq \epsilon G(o,g_n).
\end{equation}

Assume $\eta$ is also large enough to satisfy Proposition~\ref{exponentialmoments}.
Decomposing a path from $o$ to $g_n$ according to its last visit to $B_{\eta}(z_n)$, we can write
\begin{equation}\label{equationMartinparabolic3}
G(o, g_{n})=\sum_{u_{n}\in B_{\eta}(z_{n})}G(o,u_{n})G(u_{n},g_{n};B^{c}_{\eta}(z_{n}))+G(o,g_{n};B^{c}_{\eta}(z_{n}))
\end{equation}
and similarly,
\begin{equation}\label{equationMartinparabolic4}
G(x, g_{n})=\sum_{u_{n}\in B_{\eta}(z_{n})}G(x,u_{n})G(u_{n},g_{n};B^{c}_{\eta}(z_{n}))+G(x,g_{n};B^{c}_{\eta}(z_{n}))
\end{equation}

By Proposition \ref{propconvergenceneighborhoodhorospheres} we know that for any $u_{n}\in B(z_{n},\eta)$, $G(x,u_{n})/G(o,u_{n})$ converges to some $K_{\alpha}(x)$ where $\alpha\in \partial_{\mu}\Gamma$ is independent of $(u_n)$ and so, for large enough $n$, we have
\begin{equation}\label{equationMartinparabolic5}
(1-\epsilon)K_{\alpha}(x) \leq \frac{G(x,u_{n})}{G(o,u_{n})}\leq (1+\epsilon)K_{\alpha}(x).
\end{equation}

Combining~(\ref{equationMartinparabolic1}), (\ref{equationMartinparabolic4}) and~(\ref{equationMartinparabolic5}), we obtain for all large $n$ that
$$G(x,g_n)\leq \sum_{u_{n}\in B_{\eta}(z_{n})}(1+\epsilon)K_{\alpha}(x)G(o,u_{n})G(u_{n},g_{n};B^{c}_{\eta}(z_{n})) +\epsilon G(x,g_n),$$
so that
$$(1-\epsilon)G(x,g_n)\leq (1+\epsilon)K_{\alpha}(x)\sum_{u_{n}\in B_{\eta}(z_{n})}G(o,u_{n})G(u_{n},g_{n};B^{c}_{\eta}(z_{n}))$$
and then, using (\ref{equationMartinparabolic3}), $(1-\epsilon)G(x,g_n)\leq (1+\epsilon)K_{\alpha}(x)G(o,g_n)$.
Similarly, using (\ref{equationMartinparabolic2}), (\ref{equationMartinparabolic3}), (\ref{equationMartinparabolic4}) and~(\ref{equationMartinparabolic5}), we get a lower bound, so that for large enough $n$,
$$\frac{1-\epsilon}{1+\epsilon}K_{\alpha}(x) \leq \frac{G(x,g_{n})}{G(o,g_{n})}\leq \frac{1+\epsilon}{1-\epsilon}K_{\alpha}(x).$$

Since $\epsilon>0$ is arbitrary we get that $(g_{n})$ converges to $\alpha$ in the Martin boundary, completing the proof of Proposition \ref{propconvergencehorospheres}.
\qed

\medskip

We have shown that if $(g_n)$ is a sequence in $\Gamma$ such that either:
\begin{enumerate}
\item $(g_{n})$ converges to a conical point of the Bowditch boundary, or
\item For some maximal parabolic subgroup $P$, the projections $(\pi_{P} g_{n})$ converge to a point of $\partial P$,
\end{enumerate}
then $(g_n)$ converges in the Martin compactification.
As explained above, the same holds if, for some coset $gP$ of a parabolic group $P$, $(\pi_{gP} g_{n})$ converges in $g\partial P$.

To complete the proof that the Martin boundary is a $Z$-boundary we need to show the converse: namely that if $(g_n)$ converges to a point in the Martin boundary, then it satisfies either (1) or (2).

Suppose $(g_n)$ converges to a point in the Martin boundary. By  Proposition \ref{MartintoBowditch1}, $(g_n)$ converges to a point $\alpha$ in the Bowditch boundary. If $\alpha$ is conical, then (1) holds. Suppose now that $\alpha$ is parabolic, with stabilizer $P$. If (2) is not satisfied, there are subsequences $(h_{n})$ and $(h'_n)$ of $(g_n)$ with $(\pi_{P}h_n)$ and $(\pi_{P}h'_n)$ converging to different points of $\partial P$. By Proposition \ref{propconvergenceneighborhoodhorospheres}, $(\pi_{P}h_n)$ and $(\pi_{P}h'_n)$ converge to different points $\alpha$ and $\alpha'$ in the Martin boundary. Furthermore, by Proposition \ref{propconvergencehorospheres}, $(h_n)$ converges to the same point in the 
Martin boundary as $(\pi_{P}h_n)$ and $(h'_n)$ converges to the same point in the 
Martin boundary as $(\pi_{P}h'_n)$. Thus $(h_n)$ and $(h'_n)$ converge to different points of the Martin boundary, contradicting our assumption on $(g_n)$.

This proves that the Martin boundary is a $Z$-bondary, ending the proof of Theorem~\ref{maintheorem}.
Indeed, as stated above, the authors of \cite{GGPY} already proved that the identity of $\Gamma$ extends to an equivariant surjective map from the Martin compactification to the Bowditch compactification.
\qed

\bigskip
Using Theorem~\ref{maintheorem} and \cite[Theorem~1.3]{GGPY}, we recover the following corollary, which is interesting independently of the random walks context.
However, it could also be deduced from previous results of Gerasimov and Gerasimov-Potyagailo.

\begin{corollary}\label{coroZbdryFloyd}
Let $\Gamma$ be a finitely generated group, hyperbolic relative to a collection of infinite virtually abelian subgroups.
Then, there exists an equivariant and continuous surjective map from any $Z$-boundary to the Floyd boundary of $\Gamma$.
\end{corollary}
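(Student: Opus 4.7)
The plan is to reduce the statement to the random walk framework developed above. Fix any probability measure $\mu$ on $\Gamma$ whose finite support generates $\Gamma$ as a semigroup; for instance one may take the uniform measure on a finite symmetric generating set. Theorem \ref{maintheorem} then identifies the Martin boundary $\partial_{\mu}\Gamma$ as a $Z$-boundary of $(\Gamma,\Omega)$. On the other hand, \cite[Theorem~1.3]{GGPY} asserts that the identity on $\Gamma$ extends to a continuous $\Gamma$-equivariant surjection from the Martin compactification $\overline{\Gamma}_{\mu}$ onto the Floyd compactification $\overline{\Gamma}_{f}$. Restricting this extension to the complements of $\Gamma$ produces a continuous equivariant surjection $\pi\colon \partial_{\mu}\Gamma\to\partial_{f}\Gamma$.

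Now let $\partial\Gamma$ be an arbitrary $Z$-boundary of $(\Gamma,\Omega)$. The key point is that Lemma \ref{equivalentZbdry} in fact produces an equivariant homeomorphism between any two $Z$-boundaries: its proof uses only the convergence characterization in Definition \ref{Zbdry} together with the density of $\Gamma$ and metrizability of the compactifications, and does not rely on a particular choice of coset representatives or construction. Applying this to the pair $(\partial\Gamma,\partial_{\mu}\Gamma)$ yields an equivariant homeomorphism $\Phi\colon\partial\Gamma\to\partial_{\mu}\Gamma$ extending the identity on $\Gamma$. The composition $\pi\circ\Phi\colon\partial\Gamma\to\partial_{f}\Gamma$ is then the desired continuous equivariant surjection.

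There is no serious obstacle in this approach: the analytic heavy lifting has already been carried out in Theorem \ref{maintheorem} and in \cite{GGPY}. The only minor point to verify is that the uniqueness argument of Lemma \ref{equivalentZbdry} applies with one of the $Z$-boundaries being $\partial_{\mu}\Gamma$, which is immediate from inspection of its proof. As an alternative, one could also construct the map $\partial\Gamma\to\partial_{f}\Gamma$ directly from the results of Gerasimov and Gerasimov--Potyagailo recalled in Section \ref{SectionrelhypandFloyd}: the Floyd map is a bijection over conical Bowditch points, while the Floyd fibre over a parabolic fixed point with virtually abelian stabilizer $P$ is either a singleton (if $P$ is not virtually cyclic) or a pair of points (if $P$ is virtually cyclic), and in each case the $Z$-boundary fibre $\Ss^{k-1}$ admits an evident continuous equivariant surjection onto it. This independent route explains the claim that the corollary could also be deduced without appealing to random walks.
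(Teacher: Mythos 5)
Your argument is correct and is precisely the route the paper intends: the corollary is presented there as a direct consequence of Theorem~\ref{maintheorem}, the equivalence of any two $Z$-boundaries via (the proof of) Lemma~\ref{equivalentZbdry}, and the continuous equivariant surjection from the Martin compactification onto the Floyd compactification in \cite[Theorem~1.3]{GGPY}. Your closing remark about a direct construction from the Gerasimov and Gerasimov--Potyagailo results also matches the paper's own observation that the corollary could alternatively be deduced from those earlier theorems.
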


Since the Martin boundary does not depend on different peripheral  structures, but only on the random walk, our argument also implies the following.
\begin{corollary}
If $\Gamma$ is hyperbolic relative to two different collections of infinite virtually abelian subgroups, then two corresponding $Z$-boundaries, constructed for each relatively hyperbolic structure, are equivariantly homeomorphic, 
\end{corollary}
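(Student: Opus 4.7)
The plan is to use Theorem~\ref{maintheorem} as a bridge: the Martin boundary of a suitable random walk is intrinsic to $\Gamma$ and $\mu$ and knows nothing about any chosen peripheral structure, yet Theorem~\ref{maintheorem} identifies it with a $Z$-boundary of whichever relatively hyperbolic structure we like. Comparing two such identifications produces the required equivariant homeomorphism.

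More concretely, let $\Omega_1$ and $\Omega_2$ be two collections of infinite virtually abelian subgroups such that $\Gamma$ is hyperbolic relative to each. First I would fix a single symmetric finitely supported probability measure $\mu$ on $\Gamma$ whose support generates $\Gamma$ as a semigroup (such a $\mu$ exists because $\Gamma$ is finitely generated; symmetry is not needed for this argument but is harmless). The Green's function, Martin kernels and Martin compactification $\overline{\Gamma}_\mu$ are then defined purely from $(\Gamma,\mu)$, with no reference to $\Omega_1$ or $\Omega_2$. In particular $\partial_\mu \Gamma$ is a single compact metrizable $\Gamma$-space.

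Next I would apply Theorem~\ref{maintheorem} twice. Viewing $\Gamma$ as hyperbolic relative to $\Omega_1$, Theorem~\ref{maintheorem} says $\partial_\mu \Gamma$ is a $Z$-boundary for $(\Gamma,\Omega_1)$; by Lemma~\ref{equivalentZbdry}, any $Z$-boundary $\partial_1\Gamma$ built from $\Omega_1$ (for any choice of conjugacy representatives) is equivariantly homeomorphic to $\partial_\mu\Gamma$. Applying Theorem~\ref{maintheorem} to the structure $\Omega_2$ gives, in the same way, an equivariant homeomorphism between $\partial_\mu\Gamma$ and any $Z$-boundary $\partial_2\Gamma$ built from $\Omega_2$. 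Composing one homeomorphism with the inverse of the other yields an equivariant homeomorphism $\partial_1\Gamma\to\partial_2\Gamma$, which is the desired conclusion.

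There is no real obstacle here beyond checking that the hypotheses of Theorem~\ref{maintheorem} are satisfied for both peripheral structures, and both hold by assumption on $\Omega_1,\Omega_2$. The only mildly subtle point to emphasize in the write-up is that $\partial_\mu\Gamma$ is genuinely the same object in both applications of Theorem~\ref{maintheorem}; that is, the Martin compactification is a functor of $(\Gamma,\mu)$ alone, so there is no ambiguity when one changes peripheral structure. Everything else is a formal diagram chase using Lemma~\ref{equivalentZbdry} to absorb the choice of coset representatives in each $\Omega_i$.
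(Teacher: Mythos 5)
Your proof is correct and is exactly the argument the paper intends: the Martin boundary of a fixed finitely supported generating measure is intrinsic to $(\Gamma,\mu)$, Theorem~\ref{maintheorem} identifies it with a $Z$-boundary for each peripheral structure, and Lemma~\ref{equivalentZbdry} upgrades this to the stated equivariant homeomorphism between any two such $Z$-boundaries.
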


We conclude the discussion with few  questions related to the above argument.
Since $Z$-boundaries can be defined for more general relatively hyperbolic groups, it seems reasonable to ask the following question.

\medskip
1. \textit{Is Corollary~\ref{coroZbdryFloyd} true for more general relatively hyperbolic groups?} We conjecture that there are possible counter-examples.

\medskip

The following question, motivated by the proof of Proposition~\ref{propconvergencehorospheres}, also seems to be interesting.

\medskip

2. \textit{For which classes of groups $P$ does the following hold ? Assume that $\Gamma$ is hyperbolic relative to $P$ and that $(x_n)$ is a sequence of elements of $\Gamma$ leaving every compact. Then, if $y_n$ is the projection of $x_n$ onto $P$ (or actually its projection onto any horosphere based at $P$, see \cite{GePoCrelle} for the precise definition of a horosphere in this context), one has that $\frac{G(x,x_n)}{G(o,x_n)}\frac{G(x,y_n)}{G(o,y_n)}$ converges to 1}.

\section{Minimality}\label{Sectionminimality}

In this section we prove Theorem~\ref{minimal} from the introduction, namely the minimality of the Martin boundary.
We will use the following result of Dussaule \cite[Proposition~6.3]{Dussaule}.

\begin{proposition}\label{uniformseparation}
Let $p$ be a strongly irreducible transition kernel on $\Z^k\times \{1,...,N\}$ which is $\Z^k$-invariant and satisfies Assumptions~\ref{Assumption1} and \ref{Assumption2} of section~\ref{Sectionthickenedperipheralgroups}.
Let $\theta_0\neq \theta_1$ be two points in the Martin boundary $\partial \Z^k$.
There exists a neighborhood $\mathcal{U}$ of $\theta_1$ in $\partial \Z^d$ and a sequence $(x_n,k_n)$ of $\Z^d\times \{1,...,N\}$ such that
for every $\theta$ in $\mathcal{U}$, $K_{\theta}((x_n,k_n))$ tends to infinity, uniformly in $\theta$ and $K_{\theta_0}((x_n,k_n))$ converges to 0.
\end{proposition}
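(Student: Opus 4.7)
The plan is to leverage the explicit formula for the extended Martin kernel provided by Proposition~\ref{latticeMartinboundary}. Writing $u(\theta) = \varphi^{-1}(\theta) \in \{u \in \R^k : \lambda(u) = 1\}$ for $\theta \in \partial \Z^k \cong \Ss^{k-1}$, the formula reads
\[
K_\theta\bigl((x,j)\bigr) = C_j \, \mathrm{e}^{u(\theta) \cdot (x - z_0)}
\]
for any $(x,j) \in \Z^k \times \{1,\dots,N\}$, with $C_j > 0$ depending only on $j$. The problem thus reduces to producing a sequence $(x_n)$ in $\Z^k$ and a fixed index $j_0$ such that $u(\theta_0) \cdot x_n \to -\infty$ while $u(\theta) \cdot x_n \to +\infty$ uniformly for $\theta$ in a neighborhood of $\theta_1$.

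The core geometric step is producing a single vector $v \in \R^k$ with $u_0 \cdot v < 0 < u_1 \cdot v$, where $u_i := u(\theta_i)$ are the two distinct points of $\{\lambda = 1\}$ corresponding to $\theta_0 \neq \theta_1$. By Lemma~\ref{explicithomeo}, the map $u \mapsto \nabla\lambda(u)/\|\nabla\lambda(u)\|$ is a homeomorphism between the level set and $\Ss^{k-1}$; since $\nabla\lambda(u)$ is the outward normal to the level set at $u$, distinct boundary points have distinct outward normals. This forces the convex body $\{\lambda \leq 1\}$ to be strictly convex (no flat piece on the boundary). In the applications of Proposition~\ref{uniformseparation}, the relevant chain is strictly sub-Markov (cf.\ Lemma~\ref{lemmasubMarkovneighborhoodhorosphere}), which gives $\lambda(0) < 1$ and places the origin in the interior of $\{\lambda \leq 1\}$. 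By strict convexity, the ray from $0$ through $u_0$ meets the boundary only at $u_0$, so $u_1$ is not a positive multiple of $u_0$. Consequently the open half-spaces $\{v : u_0 \cdot v < 0\}$ and $\{v : u_1 \cdot v > 0\}$ have non-empty intersection, yielding the desired $v$.

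By continuity of $\varphi^{-1}$, I can then choose a neighborhood $\mathcal{U}$ of $\theta_1$ and $\delta > 0$ with $u(\theta) \cdot v \geq \delta$ for every $\theta \in \mathcal{U}$. Set $x_n$ to be a componentwise integer approximation of $nv$, and fix $j_n \equiv 1$. The explicit formula then gives
\[
K_{\theta_0}\bigl((x_n,1)\bigr) = C_1 \, \mathrm{e}^{n(u_0 \cdot v) + O(1)} \longrightarrow 0
\]
since $u_0 \cdot v < 0$, and for every $\theta \in \mathcal{U}$,
\[
K_\theta\bigl((x_n,1)\bigr) \geq C_1 \, \mathrm{e}^{n\delta + O(1)} \longrightarrow +\infty,
\]
with the $O(1)$ terms bounded uniformly in $\theta$, so the divergence is uniform on $\mathcal{U}$.

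The main obstacle is the separation step: combining strict convexity of $\{\lambda \leq 1\}$ (extracted from Lemma~\ref{explicithomeo}) with the fact that the origin lies in its interior. The latter is precisely what the strictly sub-Markov property guarantees in the applications; absent this ingredient, two level-set points could lie on a common ray through the origin, and no single direction $v$ would satisfy the required sign conditions simultaneously. Once $v$ is in hand, the remainder is a routine exponential-rate calculation using the explicit Ney--Spitzer-type formula from Proposition~\ref{latticeMartinboundary}.
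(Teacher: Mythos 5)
The paper does not actually prove this proposition: it is imported verbatim as \cite[Proposition~6.3]{Dussaule}, so there is no internal proof to compare your argument against. Judged on its own terms, your argument is sound and is the natural one given Proposition~\ref{latticeMartinboundary}: reduce to finding $v$ with $u(\theta_0)\cdot v<0<u(\theta_1)\cdot v$, take $x_n\approx nv$, and read off the exponential rates. The half-space separation step is correct: two distinct nonzero points of $\partial\{\lambda\leq 1\}$ that are not positive multiples of one another admit such a $v$, and when $0$ lies in the interior of the (convex) sub-level set, a ray from the origin meets the boundary exactly once, so distinctness of $u_0,u_1$ suffices. Two remarks. First, you use convexity of $\{\lambda\leq 1\}$ without justification; this follows from log-convexity of the Perron eigenvalue $u\mapsto\lambda(u)$ (each entry $F_{j_1,j_2}(u)$ is log-convex, and the Perron root of a matrix with log-convex entries is log-convex), and once you have convexity plus $0\in\mathrm{int}\{\lambda\leq1\}$ you do not actually need the strict convexity you extract from Lemma~\ref{explicithomeo}. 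Second, and more substantively, your argument genuinely requires $\lambda(0)<1$, which is \emph{not} a consequence of Assumptions~\ref{Assumption1} and~\ref{Assumption2} as stated (Assumption~\ref{Assumption2} only bounds the minimum of $\lambda$): for a non-centered honest Markov chain one has $\lambda(0)=1$, the point $\theta_0=\varphi(0)$ has constant Martin kernel, and the conclusion $K_{\theta_0}((x_n,k_n))\to 0$ is then impossible, so the statement itself implicitly presupposes the strictly sub-Markov situation. You flag this correctly and note that Lemma~\ref{lemmasubMarkovneighborhoodhorosphere} supplies it in the paper's application, which is the right resolution. A last cosmetic point: the constant $C_{j_1}$ in Proposition~\ref{latticeMartinboundary} really depends on $u$ as well (through the Perron eigenvector), so the uniform divergence over $\mathcal{U}$ should be secured by shrinking $\mathcal{U}$ to have compact closure and using continuity of $\theta\mapsto C_1(u(\theta))$; this is routine but worth a sentence.
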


Applied to our situation, this gives the following.
\begin{corollary}\label{sepcor}
There exists an $\eta_0>0$ such that for $\eta>\eta_0$ the following holds.
For any distinct $\alpha_{0},\alpha_{1}\in \partial P$ there exists a neighborhood $\mathcal{U}$ of $\alpha_1$ in $\partial P$ and 
and a sequence $(g_n)$ of $N_{\eta}P$ such that $K_{\alpha}(g_n)$ tends to infinity, uniformly over $\alpha \in \mathcal{U}$ and $K_{\alpha_0}(g_n)$ converges to $0$.
\end{corollary}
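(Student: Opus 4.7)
The plan is to deduce this corollary directly from Proposition~\ref{uniformseparation} by transferring it to the induced chain on a large neighborhood $N_{\eta}(P)$ of the parabolic subgroup $P$. The necessary hypotheses have already been verified in Section~\ref{Sectionconvergenceparabolic}, so the main work is to make the correct identifications.

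First, I identify $P$ with $\Z^{k}\times F$ for a finite set $F$, using a section of $P/\Z^{k}\to P$. Ordering the $P$-cosets in $\Gamma$ by their distance to $P$, the $\eta$-neighborhood $N_{\eta}(P)$ becomes $P$-equivariantly identified with $P\times \{1,\dots,N_{\eta}\}$, and hence with $\Z^{k}\times\{1,\dots,N'_{\eta}\}$ for some integer $N'_{\eta}$ depending on $\eta$. Under this identification the geometric boundary $\partial P$ coincides with the geometric boundary of $\Z^{k}\times\{1,\dots,N'_{\eta}\}$, since both are defined by $z_{n}\to\infty$ and $z_{n}/\|z_{n}\|$ converging in $\Ss^{k-1}$.

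Next, let $p$ denote the sub-Markov chain on $N_{\eta}(P)$ given by the first return of the $\mu$-random walk. I check the hypotheses of Proposition~\ref{uniformseparation} one by one: the chain $p$ is $\Z^{k}$-invariant by construction; it is strongly irreducible by Lemma~\ref{strongirreducibility}; it is strictly sub-Markov by Lemma~\ref{lemmasubMarkovneighborhoodhorosphere}, which via Lemma~\ref{inducedchainassumption2} yields Assumption~\ref{Assumption2}. For Assumption~\ref{Assumption1}, I fix the universal exponential-moment threshold $M$ produced by Proposition~\ref{inducedchainassumption1}, and then use Proposition~\ref{exponentialmoments} to choose $\eta_{0}$ so that for every $\eta>\eta_{0}$ the induced chain $p$ has exponential moments up to $M$; by Proposition~\ref{inducedchainassumption1} this gives Assumption~\ref{Assumption1}. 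The crucial point (highlighted in the remark after Proposition~\ref{inducedchainassumption1}) is that $M$ does not depend on $\eta$, so a single threshold $\eta_{0}$ works for all large $\eta$.

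Finally, for $\eta>\eta_{0}$ I can apply Proposition~\ref{uniformseparation} to the chain $p$ on $\Z^{k}\times\{1,\dots,N'_{\eta}\}$. Given distinct $\alpha_{0},\alpha_{1}\in\partial P$, identified with distinct points $\theta_{0},\theta_{1}\in\partial\Z^{k}$, I obtain a neighborhood $\mathcal{U}$ of $\alpha_{1}$ and a sequence $(g_{n})$ in $N_{\eta}(P)$ such that the Martin kernels $K^{p}_{\alpha}(g_{n})$ for the induced chain blow up uniformly in $\alpha\in\mathcal{U}$ and $K^{p}_{\alpha_{0}}(g_{n})\to 0$. By Lemma~\ref{sameGreen} the Green's function of $p$ agrees with the restriction of $G$ to $N_{\eta}(P)$, so that $K^{p}_{\alpha}(g_{n})=K_{\alpha}(g_{n})$ for $g_{n}\in N_{\eta}(P)$ (the limits of Martin kernels along sequences in $N_{\eta}(P)$ coincide with the limits produced by Proposition~\ref{propconvergenceneighborhoodhorospheres} inside the Martin boundary of $\Gamma$). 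This gives exactly the conclusion of the corollary; the only subtle step is ensuring the same $\eta_{0}$ works for all pairs $\alpha_{0},\alpha_{1}$, which is automatic because $\eta_{0}$ is chosen before any pair is fixed.
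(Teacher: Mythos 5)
Your proof is correct and follows exactly the route the paper intends: the paper gives no explicit argument for this corollary (it simply says Proposition~\ref{uniformseparation} "applied to our situation" yields it), and the intended application is precisely your transfer to the first-return chain on $N_{\eta}(P)$, with Assumptions~\ref{Assumption1} and~\ref{Assumption2} supplied by Lemmas~\ref{strongirreducibility}, \ref{lemmasubMarkovneighborhoodhorosphere}, \ref{inducedchainassumption2} and Propositions~\ref{inducedchainassumption1}, \ref{exponentialmoments}, and the kernels identified via Lemma~\ref{sameGreen}. Your observation that the uniformity of $\eta_0$ rests on the $N$-independence of $M$ in Proposition~\ref{inducedchainassumption1} is exactly the point the paper's remark after that proposition flags.
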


We now prove the following.
\begin{theorem}
Let $\Gamma$ be hyperbolic relative to a collection of virtually abelian subgroups. Let $\mu$ be a probability measure on $\Gamma$ whose finite support generates $\Gamma$ as a semigroup. Then every point of the Martin boundary $\partial_{\mu}\Gamma$ corresponds to a minimal harmonic function.
\end{theorem}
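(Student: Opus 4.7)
The strategy is to use the $Z$-boundary description from Theorem~\ref{maintheorem}: every point $\xi$ of the Martin boundary projects under the map $F$ of Proposition~\ref{MartintoBowditch1} to either a conical limit point or a parabolic fixed point of the Bowditch boundary, and I would treat the two cases separately.

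If $\xi$ lies over a conical point $\zeta$, then $F^{-1}(\zeta)=\{\xi\}$ is a singleton. I would prove minimality via the standard equivalence between minimality of $K_\xi$ and almost sure convergence of the $K_\xi$-Doob-transformed chain to a deterministic point of the Martin compactification. The Doob chain still converges almost surely to some point of the Bowditch boundary, and the relative Ancona inequality (Theorem~\ref{Ancona1}), combined with the transition-point tracking used in the proof of Proposition~\ref{Martinboundaryconicalpoints}, forces this limit to equal $\zeta$ almost surely: a Doob trajectory accumulates infinitely many $(\eta,r)$-transition points of geodesics toward $\zeta$, and passing through bounded neighborhoods of such transition points pins the limit in the Martin compactification. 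Since the fibre is a singleton, convergence to $\zeta$ lifts to convergence to $\xi$, the tail $\sigma$-algebra is trivial, and $K_\xi$ is minimal.

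If $\xi$ lies over a parabolic point $\zeta$ with stabilizer $P$, identify $\xi$ with $\theta_\xi\in\partial P\cong\mathbb{S}^{k-1}$. Given a non-negative harmonic $h\leq K_\xi$, Choquet's theorem gives a representation $h=\int_{\partial^m_\mu\Gamma}K_\alpha\,d\nu(\alpha)$; my goal is to show that $\nu$ is concentrated at $\xi$. I would split the support of $\nu$ into three pieces according to the image under $F$: points over conical limit points, points over parabolic fibres different from $F^{-1}(\zeta)$, and points in $F^{-1}(\zeta)\setminus\{\xi\}$. Choose a sequence $(g_n)$ in $N_\eta(P)$ with $\pi_P(g_n)$ tending to $\theta_\xi$ in $\partial P$; by Proposition~\ref{propconvergencehorospheres}, $g_n\to\xi$ in the Martin compactification. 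For points $\alpha$ in the first two pieces, Theorem~\ref{Ancona1} applied along word geodesics from $g_n$ to representatives approximating $\alpha$ gives $K_\alpha(g_n)/K_\xi(g_n)\to 0$, uniformly on compact subsets of $\partial_\mu\Gamma$ disjoint from $F^{-1}(\zeta)$. For $\alpha$ in the third piece, Corollary~\ref{sepcor} furnishes an auxiliary separating sequence, giving pointwise control $K_\alpha(g_n)/K_\xi(g_n)\to 0$ there as well. Dividing the inequality $h(g_n)\leq K_\xi(g_n)$ by $K_\xi(g_n)$ and applying dominated convergence (with uniform bound $K_\alpha/K_\xi\leq 1$ on the support of $\nu$, using the finiteness of $\nu$ established via $h(o)\leq K_\xi(o)=1$) forces $\nu$ to vanish off $\{\xi\}$, so $h=cK_\xi$.

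The main obstacle will be the parabolic case, specifically patching the two separation mechanisms into a single dominated-convergence argument. Corollary~\ref{sepcor} is only a pointwise statement in $\alpha_0$ (with uniformity only in a neighborhood of $\alpha_1=\xi$), so to sweep out the whole fibre $F^{-1}(\zeta)\setminus\{\xi\}$ along a fixed sequence $(g_n)$ one needs a compactness argument on the sphere $\partial P$: either by iterating and extracting subsequences, or by upgrading the proof of Corollary~\ref{sepcor} to give uniform decay away from a neighborhood of $\xi$, mirroring the uniform decay on $\mathbb{Z}^k$-type chains established in \cite{Dussaule}. Reconciling this sphere-level uniformity with the Ancona-based estimate for the rest of $\partial_\mu\Gamma$ (so that both mechanisms apply along the \emph{same} sequence) is the technical heart of the argument.
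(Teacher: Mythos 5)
Your high-level structure (split along the fibres of $F$, singleton fibres over conical points, Choquet representation plus separation of kernels inside a parabolic fibre) matches the paper, and your conical case --- via the Doob transform and tail triviality --- is a legitimate alternative to the paper's one-line deduction from Lemma~\ref{bowditchsupport}, though it essentially re-derives material the paper simply cites from \cite{GGPY}. The genuine problem is the parabolic case, where your dominated-convergence scheme does not close. You fix a single sequence $g_n\to\xi$ and want to divide $h(g_n)\leq K_\xi(g_n)$ by $K_\xi(g_n)$; but the claimed dominating bound $K_\alpha/K_\xi\leq 1$ on the support of $\nu$ is unjustified and generally false (from $h=\int K_\alpha\,d\nu\leq K_\xi$ one gets at best $\nu(\{\alpha\})K_\alpha\leq K_\xi$, which says nothing for the non-atomic part of $\nu$, and near $\xi$ the ratio $K_\alpha(g_n)/K_\xi(g_n)$ behaves like $\mathrm{e}^{(u_\alpha-u_\xi)\cdot z_n}$, whose supremum over $n$ blows up as $u_\alpha\to u_\xi$ unless $z_n$ approaches its limit direction at a controlled rate). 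Worse, even granting pointwise decay and domination, the conclusion you draw is not the one you need: the limit of $\int K_\alpha(g_n)/K_\xi(g_n)\,d\nu(\alpha)$ being some number in $[0,1]$ does not ``force $\nu$ to vanish off $\{\xi\}$''; without a matching identification of that limit with the total mass $\nu(\partial_\mu^m\Gamma)=h(o)$, Fatou only gives the trivial inequality $\nu(\{\xi\})\leq 1$.

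The paper avoids all of this by reversing the roles of the two boundary points. It applies Choquet to $K_{\alpha_0}$ itself, restricts the support of $\nu_0$ to the fibre $F^{-1}(F(\alpha_0))=\partial P$ by citing Lemma~\ref{bowditchsupport} (rather than re-proving it with Ancona estimates, as you propose), and then, for \emph{each} other point $\alpha_1\in\partial P$, invokes Corollary~\ref{sepcor} to produce a sequence $(g_n)$ adapted to $\alpha_1$ --- not to $\alpha_0$ --- along which $K_{\alpha_0}(g_n)\to 0$ while $K_\alpha(g_n)\to\infty$ uniformly on a neighborhood $\mathcal{U}$ of $\alpha_1$. The integral identity then gives $K_{\alpha_0}(g_n)\geq\nu_0(\mathcal{U})\inf_{\alpha\in\mathcal{U}}K_\alpha(g_n)\geq\nu_0(\mathcal{U})$ for large $n$, hence $\nu_0(\mathcal{U})=0$. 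This is a lower-bound (Chebyshev-type) argument requiring no dominating function and no uniformity over the whole sphere; the compactness issue you flag as ``the technical heart'' simply does not arise, because one sequence per excluded point suffices. To repair your write-up you should either adopt this architecture, or substantially strengthen your separation estimate to a uniform-integrability statement for the family $\alpha\mapsto K_\alpha(g_n)/K_\xi(g_n)$ together with the identification $\lim_n h(g_n)/K_\xi(g_n)=\nu(\{\xi\})=h(o)$; as written, the argument has a gap.
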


\begin{proof}
By Theorem \ref{maintheorem}, $\partial_{\mu}\Gamma$ is a $Z$-boundary.
This means that there is a $\Gamma$-equivariant surjective map
$F:\partial_{\mu}\Gamma \to \partial_{B}\Gamma$ such that if $a\in \partial_{B}\Gamma$ is conical, $F^{-1}(a)$ is a single point and if $a\in \partial_{B}\Gamma$ is parabolic, 
$F^{-1}(a)=\partial P$ where $P$ is the stabilizer of $a$ and $\partial P$ denotes its $CAT(0)$ boundary.
Notice that $F:\partial_{\mu}\Gamma \to \partial_{B}\Gamma$ is the same map as the the map $\psi:\partial_{\mathcal{M}}\Gamma \rightarrow \partial_B\Gamma$ constructed in \cite{GGPY} (see the discussion after Theorem~1.3 there).

Let $\alpha_{0} \in \partial_{\mu}\Gamma$.
Then $K_{\alpha_0}$ is a positive harmonic function.
By the Choquet representation theorem, there exists a finite Borel measure $\nu_{0}=\nu^{\alpha_0}$ on the Martin boundary, with support contained in the minimal Martin boundary $\partial^{m}_{\mu}\Gamma$ such that for all $g\in \Gamma$
$$K_{\alpha_{0}}(g)=\int_{\partial^{m}_{\mu}\Gamma}K_{\alpha}(g)d\nu_0(\alpha).$$

To prove minimality of $\alpha_0$ it suffices to show that the support of $\nu_{0}$ consists of the single point $\alpha_0$. 
In Corollary 7.9 of \cite{GGPY} the authors prove the following result, which is based on relative Ancona inequalities (see Theorem~\ref{Ancona-Floyd}).

\begin{lemma} \label{bowditchsupport}
The support of $K_{\alpha_0}$ is contained in $F^{-1}(F(\alpha_0))$.
\end{lemma}

If $F(\alpha_0)$ is conical, then $F^{-1}(F(\alpha_0))$ is a single point. Lemma~\ref{bowditchsupport} then implies that the support of $\nu_0$ is a single point so that $\alpha_0$ is minimal.

On the other hand, if $\alpha_0$ is a parabolic point of the Bowditch boundary with stabilizer $P$, Theorem \ref{maintheorem} implies that $F^{-1}(F(\alpha_0))=\partial P$. Thus we know $\nu_0$ is supported on $\partial P \cap \partial^{m}_{\mu}\Gamma$.

Now, suppose $\alpha_1$ is a point of $\partial P$ distinct from $\alpha_0$. By Corollary~\ref{sepcor}, there exists a neighborhood $\mathcal{U}$ of $\alpha_1$ and a sequence $(g_n)$ such that $K_{\alpha}(g_n)$ tends to infinity uniformly over $\alpha \in \mathcal{U}$ and $K_{\alpha_0}(g_n)$ converges to 0.
Thus, for large enough $n$ and for all $\alpha \in U$, we have $K_{\alpha}(g_n)\geq 1$.
Then by definition,
$$K_{\alpha_0}(g_n)=\int_{\alpha \in \partial P} K_{\alpha}(g_n)d\nu_{0}(\alpha) \geq 
\int_{\alpha \in U} K_{\alpha}(g_n)d\nu_{0}(\alpha)\geq \nu_0(\mathcal{U}).$$

As $K_{\alpha_0}(g_n)\to 0$ as $n\to \infty$ it follows that $\nu_{0}(\mathcal{U})=0$ so that the support of $\nu_0$ does not contain $\alpha$. We conclude that the support of $\nu_0$ consists only of $\alpha_0$, so $K_{\alpha_0}$ must be minimal.
\end{proof}

\appendix
\section{Construction of a $Z$-boundary}
We now use results of Dahmani in \cite{Dahmani} to show that if $\Gamma$ is hyperbolic relative to a collection of virtually abelian subgroups, there is a geometric construction of a $Z$-boundary.
His construction uses Farb's definition of relatively hyperbolic groups and we first give a brief outline of it.

\subsection{Farb's definition of relative hyperbolicity}
Let $\Gamma$ be a finitely generated group and $\Omega$ a family of finitely generated subgroups of $\Gamma$.
If $C_S\Gamma$ is a Cayley graph of $\Gamma$ containing the Cayley graphs of every subgroup in $\Omega$,
define the coned-off Cayley graph by adding one vertex $v(gP)$ for every coset of the form $gP$, with $g\in \Gamma$ and $P\in \Omega$ and adding one edge $e(g,\gamma)$ between $v(gP)$ and $g\gamma$, for $P$ in $\Omega$ and $\gamma$ in $P$.
Following Farb in \cite{Farb}, the group $\Gamma$ is said to be weakly hyperbolic relative to the collection $\Omega$ if the coned-off Cayley graph defined above is hyperbolic (in the sense of Gromov).
This definition is invariant under quasi-isometry and thus does not depend on the choice of the Cayley graph of $\Gamma$.
In the following, for simplicity, we will assume that there is only one conjugacy class in $\Omega$, and in that case, if $P\in \Omega$, then $\Gamma$ is said to be hyperbolic relative to $P$.
We fix a Cayley graph $C_S\Gamma$ containing the Cayley graph of $P$.
The coned-off Cayley graph is quasi-isometric to the quotient space obtained by identifying every two elements lying in the same left coset of $P$.
Denote by $\hat{\Gamma}$ this quotient space
which is thus hyperbolic and denote by $\partial \hat{\Gamma}$ its Gromov boundary.
The following is a reformulation by Dahmani in \cite{Dahmani} of the bounded coset penetration property of Farb in \cite{Farb}.

\begin{definition}
Let $\alpha:[a,b]\rightarrow C_S\Gamma$ be a path parametrized by arc length and let $\hat{\alpha}$ be its image in $\hat{\Gamma}$.
Re-parameter $\hat{\alpha}$ to remove all loops of length 1 (that is remove subpaths that lie in a same coset), so that it is still parametrized by arc length.
Say that $\alpha$ is a relative geodesic if this new parametrization of $\hat{\alpha}$ is a geodesic.
Say that it is a $(\lambda,c)$-relative-quasi-geodesic if the new parametrization of $\hat{\alpha}$ is a $(\lambda,c)$-relative-quasi-geodesic,
that is,
$$\frac{1}{\lambda}|t-s|-c\leq d(\hat{\alpha}(t),\hat{\alpha}(s))\leq \lambda |t-s|+c.$$
\end{definition}

\begin{definition}
The pair $(\Gamma,P)$ satisfies the bounded coset penetration property (BCP for short) if for all $\lambda,c$, there exists a constant $r$ such that for every pair $(\alpha_1,\alpha_2)$ of $(\lambda,c)$-relative-quasi-geodesic without loop, starting at the same point and ending at the same point in $C_S\Gamma$, the following holds
\begin{enumerate}
\item if $\alpha_1$ travels more than $r$ in a coset, then $\alpha_2$ enters this coset,
\item if $\alpha_1$ and $\alpha_2$ enter the same coset, the two entering points and the two exiting points are $r$-close to each other.
\end{enumerate}
\end{definition}

Say that $\Gamma$ is hyperbolic relative to $P$ if it is weakly hyperbolic relative to $P$ and the pair $(\Gamma,P)$ satisfies the BCP property.
Similarly, one can define hyperbolicity relative to a family of subgroups $\Omega$.
This definition of relative hyperbolicity is equivalent to those we gave in Section~\ref{Sectionrelativelyhyperbolicgroups} (see for example Appendix~A in \cite{Dahmanithesis}, where Dahmani proves it is equivalent to the definition of Bowditch in \cite{Bowditch}).

\subsection{Dahmani's geometric boundary}
In \cite{Dahmani}, the author introduces a compactification for general relatively hyperbolic groups. This construction is a $Z$-boundary in our situation.
For simplicity, we again assume that there is only one conjugacy class in $\Omega$, that is $\Gamma$ is hyperbolic relative to a parabolic group $P$, but the construction holds for more than one conjugacy class (see Section~6.2 in \cite{Dahmani}).

We will need the following definition.
\begin{definition}
Consider a path $\alpha$ in the Cayley graph $C_S\Gamma$ and denote by $g_0$ the starting point of $\alpha$. The path $\alpha$ is called left-reduced if it immediately leaves the coset $g_0P$ and never returns to it.
\end{definition}

\begin{definition}
Assume that $P\cup \partial P$ is a compactification of $P$.
Say that finite sets fade at infinity if for all finite subset $F$ of $P$ and for all open cover $\mathcal{U}$ of $P\cup \partial P$, all translates of $F$ but finitely many are contained in an element of $\mathcal{U}$.
\end{definition}

Also recall that $\partial \hat{\Gamma}$ is the Gromov boundary of the quotient space $\hat{\Gamma}$ obtained by identifying every coset of $P$ to one point.
Now, choose a system of representatives $\widetilde{\Gamma/P}$ of $\Gamma/P$ and define the boundary $\partial \Gamma$ of $\Gamma$ as
$$\partial \Gamma:=\left (\underset{\tilde{\gamma}\in \widetilde{\Gamma/P}}{\bigsqcup}\tilde{\gamma}\partial P\right )\sqcup \partial \hat{\Gamma}.$$

Dahmani \cite[Theorem~3.1]{Dahmani} proved the following.
\begin{theorem}\label{Dahmaniboundary}
Assume that finite sets fade at infinity in $P\cup \partial P$. Then, the discrete topology on $\Gamma$ extends to a topology on $\Gamma \cup \partial \Gamma$ which makes it a metrizable compact space.
With this topology, $\Gamma$ is dense in $\Gamma \cup \partial \Gamma$.
Moreover, we have the following characterization of convergence to the boundary.
Let $(\gamma_n)$ be a sequence in $\Gamma$ and $\xi \in \partial \Gamma$.
\begin{itemize}
\item If $\xi\in \partial \hat{\Gamma}$, then $(\gamma_n)$ converges to $\xi$ if and only if its image $(\hat{\gamma}_n)$ in $\hat{\Gamma}$ converges to $\xi$ (in the Gromov sense).
\item If $\xi\in \tilde{\gamma}\partial P$, with $\tilde{\gamma}\in \widetilde{\Gamma/P}$, then $(\gamma_n)$ converges to $\xi$ if and only if there is a sequence $(x_n)$ of points in $P$ and a sequence of left-reduced relative-quasi-geodesics (with bounded parameters) from $\tilde{\gamma}x_n$ to $\gamma_n$, such that $x_n$ converges to a point in $\partial P$.
\end{itemize}
\end{theorem}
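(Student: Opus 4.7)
The plan is to verify the four claims in turn: the existence of a Hausdorff topology extending the discrete topology on $\Gamma$; density of $\Gamma$; metrizability and compactness; and the sequential characterization of convergence. Along the way, the two hypotheses to exploit are the bounded coset penetration (BCP) property for $(\Gamma,P)$ and the assumption that finite sets fade at infinity in $P\cup\partial P$.

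First I would specify a neighborhood basis at each boundary point. At $\xi\in\partial\hat\Gamma$, take the pullbacks under the quotient $\Gamma\to\hat\Gamma$ of the standard Gromov neighborhoods of $\xi$ in $\hat\Gamma\cup\partial\hat\Gamma$. At a parabolic point $\xi\in\tilde\gamma\partial P$, fix an open neighborhood $V$ of $\xi$ inside $\tilde\gamma(P\cup\partial P)$ together with parameters $(\lambda,c)$, and declare the basic neighborhood to consist of $V\cap\partial P$ together with all $g\in\Gamma$ which are the endpoint of some left-reduced $(\lambda,c)$-relative-quasi-geodesic starting from some $\tilde\gamma x\in\tilde\gamma V\cap\Gamma$. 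The key point is that BCP makes this notion essentially independent of the choice of parameters (up to shrinking $V$), while the fading-at-infinity hypothesis implies that enlarging $V$ exhausts a genuinely open neighborhood, since only finitely many translates of any finite "obstruction" set of exceptional entry/exit points can remain outside $V$.

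Next I would check Hausdorffness. Two parabolic points in the same coset $\tilde\gamma P$ are separated by disjoint neighborhoods in $P\cup\partial P$ together with BCP; two parabolic points in different cosets $\tilde\gamma_1P$ and $\tilde\gamma_2P$ are separated because any left-reduced relative-quasi-geodesic between them must exit $\tilde\gamma_1 P$ and enter $\tilde\gamma_2 P$ with penetration points controlled by BCP; and a parabolic point is separated from $\xi\in\partial\hat\Gamma$ because the whole coset $\tilde\gamma P$ collapses to a single vertex in $\hat\Gamma$, which has bounded distance in $\hat\Gamma$ from the basepoint and hence is far from $\xi$ in the Gromov compactification. Density is then routine: a parabolic $\xi\in\tilde\gamma\partial P$ is the limit of $\tilde\gamma x_n$ whenever $x_n\to\xi$ in $P\cup\partial P$, and any $\xi\in\partial\hat\Gamma$ is a Gromov limit of a sequence in $\hat\Gamma$ which lifts arbitrarily to $\Gamma$.

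For metrizability and compactness I would establish regularity and second countability. Second countability follows because the cosets $\tilde\gamma P$ are countable, each contributes a countable basis by second countability of $P\cup\partial P$, and $\hat\Gamma\cup\partial\hat\Gamma$ is itself compact metrizable; Urysohn's metrization theorem then yields metrizability. For compactness I would argue by sequential compactness: given $(\gamma_n)$, project to $\hat\Gamma$; if the projection leaves every bounded subset of $\hat\Gamma$, extract a subsequence converging in $\partial\hat\Gamma$; otherwise pass to a subsequence contained in finitely many cosets, hence in a single coset $\tilde\gamma P$, and use compactness of $P\cup\partial P$ to extract a limit in $\tilde\gamma(P\cup\partial P)$. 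Finally, the convergence criterion is then essentially the definition of the neighborhood basis at a parabolic point together with BCP for the left-reduced relative-quasi-geodesic formulation, and is the definition of Gromov convergence in $\hat\Gamma$ in the hyperbolic case. The hardest part, in my view, will be showing that the basic neighborhoods at parabolic points are genuinely open and that distinct boundary points receive disjoint neighborhoods: this is precisely where both BCP and the fading hypothesis are indispensable, and the non-canonicity of the notion "left-reduced relative-quasi-geodesic" under variation of parameters and starting points must be handled with care.
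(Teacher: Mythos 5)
First, a point of reference: the paper does not prove this statement at all --- it is quoted from Dahmani (\cite[Theorem~3.1]{Dahmani}) and used as a black box in the appendix, so there is no internal proof to compare against. On its own merits, your overall architecture (explicit neighborhood bases at the two kinds of boundary points, BCP to make the basis at a parabolic point independent of the quasi-geodesic parameters and of the entry point, the fading-at-infinity hypothesis for openness and separation, Urysohn for metrizability) is the right shape and is consistent with how Dahmani's construction is organized.

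There is, however, a genuine gap in your compactness argument, which is the heart of the theorem. You assert that $\hat\Gamma\cup\partial\hat\Gamma$ is compact metrizable, and you run the dichotomy ``either $(\hat\gamma_n)$ leaves every bounded subset of $\hat\Gamma$, or a subsequence lies in finitely many cosets, hence in a single coset $\tilde\gamma P$.'' Both claims fail because the coned-off graph is not proper: the cone vertex $v(gP)$ has infinitely many neighbours, so a bounded ball in $\hat\Gamma$ contains infinitely many group elements and meets infinitely many distinct cosets. In particular $\Gamma\cup\partial\hat\Gamma$ is certainly not compact (take $p_n\to\infty$ in $P$; it stays at $\hat\Gamma$-distance at most $2$ from the identity and has no limit there), which is precisely why the pieces $\tilde\gamma\,\partial P$ must be glued in; and the Gromov bordification of a non-proper hyperbolic space need not be sequentially compact, so even the first branch of your dichotomy requires an argument specific to the coned-off graph. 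For the second branch, a sequence such as $\gamma_n=p_n s p_n'$ with $p_n,p_n'\to\infty$ in $P$ and $s\notin P$ is $\hat\Gamma$-bounded yet visits infinitely many distinct cosets; in Dahmani's topology it converges to the limit of $(p_n)$ in $\partial P$, i.e.\ to the boundary of the first coset that the relative geodesics from the basepoint penetrate deeply, not to the boundary of any coset containing the points $\gamma_n$ themselves. The correct treatment of the $\hat\Gamma$-bounded case is to use BCP to extract a single coset $\tilde\gamma P$ into which the relative quasi-geodesics from the basepoint to $\gamma_n$ penetrate unboundedly along a subsequence, and only then apply compactness of $P\cup\partial P$ to the exit points in that coset. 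As written, your argument does not produce that coset, so compactness (and the second-countability count, which also leans on the false compactness of $\hat\Gamma\cup\partial\hat\Gamma$) is not established.
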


We can rephrase convergence in $\tilde{\gamma}\partial P$ as follows.
Assume that $\gamma\in \Gamma$, $x\in P$, and there is a left-reduced relative-quasi-geodesic (with bounded parameters) from $\tilde{\gamma}x$ to $\gamma$.
According to the BCP property, if $\alpha$ is another relative-quasi-geodesic with bounded parameters from $\tilde{\gamma}P$ to $\gamma$, then the exiting point of $\alpha$ is a uniformly bounded distance away from $\tilde{\gamma}x$.
In particular, if $\pi_P\gamma$ is a closest point projection of $\gamma$ on $P$, then $\pi_P\gamma$ is a bounded distance away from $\tilde{\gamma}x$.
Thus, a sequence $(\gamma_n)$ converges to a point $\xi\in \tilde{\gamma}\partial P$ if and only if the sequence of closest point projections of $\gamma_n$ on $\tilde{\gamma}P$ converges to $\xi \in \tilde{\gamma}\partial P$.

Now, recall that in our context, the parabolic subgroup $P$ is virtually abelian.
We have already defined a compactification of $P$.
Identifying $P$ with $\Z^k\times \{1,...,N\}$, as in Section~\ref{Sectioncompactification},
and identifying $\gamma \in P$ with $(z,j)\in \Z^k\times \{1,...,N\}$,
a sequence $(z_n,j_n)$ converges in $\partial P$ if $z_n$ tends to infinity and $\frac{z_n}{\|z_n\|}$ converges to a point $\theta$ in $\Ss^{k-1}$.
Formally, one can choose sets of the form $U_{n,m}(\theta)\times \{1,...,N\}$ to form a countable system of neighborhoods of a point $\theta$ in the boundary, where
$$U_{n,m}(\theta)=V_n(\theta)\cup \{z\in \mathbb{Z}^k, \|z\|\geq m, \frac{z}{\|z\|}\in V_n(\theta)\},$$ with $V_n(\theta)$ a neighborhood of $\theta$ in $\mathbb{S}^{k-1}$.

\begin{lemma}\label{lemmafinitesets}
With this topology, finite sets fade at infinity in $P\cup \partial P$.
\end{lemma}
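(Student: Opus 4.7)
The plan is to verify the defining property directly: given a finite $F \subset P$ and an open cover $\mathcal{U}$ of $P \cup \partial P$, I want to show that all but finitely many translates $gF$ (for $g \in P$) are contained in a single element of $\mathcal{U}$. Since $\partial P \cong \Ss^{k-1}$ is compact and $P \cup \partial P$ is itself compact (a sequence either stays bounded, hence eventually constant by discreteness, or escapes and admits a subsequence whose normalized $\Z^k$-coordinate converges on $\Ss^{k-1}$), I may replace $\mathcal{U}$ by a finite subcover. For each boundary point $\theta \in \partial P$ I pick an element of the cover containing $\theta$ and a basic neighborhood $U_{n_\theta, m_\theta}(\theta) \times \{1,\dots,N\}$ contained in it, then pass to a finite subcover of $\partial P$ by the sphere-parts $V_{n_\theta}(\theta) \subset \Ss^{k-1}$. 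This produces a finite family $\{(\theta_i, n_i, m_i)\}_{i=1}^{s}$, each indexing a basic neighborhood lying inside some cover element, with the $V_{n_i}(\theta_i)$ covering $\Ss^{k-1}$.

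The geometric heart of the argument is that, because $P$ is virtually abelian with $\Z^k$ of finite index (identified with $\Z^k \times \{1,\dots,N\}$ via a section), left multiplication acts on the $\Z^k$-coordinate by a shift plus a bounded error: writing $g = (z_0, j_0)$, the point $g \cdot (z,j)$ has $\Z^k$-coordinate of the form $z_0 + \phi_{j_0}(z) + c(j_0, j)$, where $\phi_{j_0}$ is a conjugation automorphism of a normal finite-index subgroup of $\Z^k$ and $c$ is a bounded cocycle. Since $F$ is finite and both $j_0$ and the $j$-coordinates of elements of $F$ range over the finite set $\{1,\dots,N\}$, there is a constant $D = D(F)$ such that every element of $gF$ has $\Z^k$-coordinate within $\Z^k$-distance $D$ of $z_0$, uniformly in $g \in P$.

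With this in hand, I apply the Lebesgue number lemma to the finite open cover $\{V_{n_i}(\theta_i)\}$ of $\Ss^{k-1}$ to obtain $\epsilon > 0$ such that every $\epsilon$-ball on $\Ss^{k-1}$ lies in some $V_{n_i}(\theta_i)$. For $g = (z_0, j_0)$ with $\|z_0\|$ large enough that the angular spread of the ball $B(z_0, D) \subset \R^k$ seen from the origin is below $\epsilon$ and $\|z_0\| > D + \max_i m_i$, every element of $gF$ has $\Z^k$-norm at least $\max_i m_i$ and normalized coordinate in the $\epsilon$-ball around $z_0/\|z_0\|$ on $\Ss^{k-1}$. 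Picking any $i$ with $z_0/\|z_0\| \in V_{n_i}(\theta_i)$, the whole set $gF$ is then contained in $U_{n_i, m_i}(\theta_i) \times \{1,\dots,N\}$, hence in a single element of the original cover. The $g$ failing these bounds have bounded $\Z^k$-coordinate and thus form a finite subset of $P$.

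The main obstacle, though really just a bookkeeping one, is confirming the uniform bound $D(F)$ on how left translation distorts the $\Z^k$-coordinate on $F$: this reduces to the facts that $P/\Z^k$ is finite and each conjugation automorphism of $\Z^k$ is bilipschitz of bounded distortion, so only finitely many conjugations and cocycle values appear. Everything else is the standard Lebesgue-number comparison between angular and radial neighborhoods in $\R^k$.
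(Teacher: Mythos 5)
Your proof is correct and rests on the same key fact as the paper's: left-translates of a finite set $F$ have $\Z^k$-coordinates within a uniformly bounded distance $D(F)$ of one another (what the paper calls the fellow-traveler property), so that once they are far from the origin they have small angular spread and fit inside a single basic neighborhood $U_{n,m}(\theta)\times\{1,\dots,N\}$ of a boundary point. The only difference is packaging --- the paper argues by contradiction, extracting a subsequence of bad translates converging to some $\theta\in\partial P$, whereas you argue directly via a Lebesgue number for a finite subcover of $\Ss^{k-1}$ and also make the bounded-displacement estimate explicit through the cocycle computation for the virtually abelian structure; both are valid.
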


\begin{proof}
First, we notice that $\mathbb{Z}^k\cup \partial \mathbb{Z}^k$ satisfies a fellow-traveler property.
Precisely, if $(z_n)$ and $(z'_n)$ are two sequences in $\mathbb{Z}^k$ such that $\|z_n-z'_n\|$ is bounded by a constant and $z_n$ converges to a point $\theta$  in $\partial \mathbb{Z}^k$, that is $\|z_n\|$ tends to infinity and $\frac{z_n}{\|z_n\|}$ converges to $\theta\in \mathbb{S}^{k-1}$, then $z'_n$ also converges to $\theta$ in $\partial \mathbb{Z}^k$. By induction, the same holds with a finite number of sequences, that is, if $(z^{(1)}_n),...,(z^{(j)}_n)$ are sequences in $\mathbb{Z}^k$ such that $\|z^{(j_1)}_n-z^{(j_2)}_n\|$ is bounded for every $j_1,j_2$ and such that one of them converges in $\partial \mathbb{Z}^k$, then they all converge to the same point.

Assume by contradiction that $F$ is a finite subset of $P$ and $\mathcal{U}$ is an open cover of $P\cup \partial P$ such that there are infinitely many translates of $F$ that are not contained in one of the open sets in $\mathcal{U}$.
Denotes these translates by $\gamma_n\cdot F$.
Let $f\in F$. The sequence $(\gamma_n\cdot f)$ tends to infinity.
Up to taking a sub-sequence, it converges in $\partial P$.
By the fellow-traveler property described above, for every $f'\in F$, $(\gamma_n\cdot f')$ converges to the same point.
Eventually, they all lie in a neighborhood of this limit point, which is a contradiction.
\end{proof}

We use this compactification of $P$ to construct $\partial \Gamma$ as above.
The left multiplication action on $\Gamma$ extends to an action on $\Gamma \cup \partial \Gamma$ in the obvious way.
In \cite{Dahmani}, the author claims (without a proof) that the compactification $\Gamma \cup \partial \Gamma$ projects on the Bowditch compactification
$\Gamma\cup \partial_B\Gamma$.
Indeed, we have the following.

\begin{lemma}\label{DahmanitoBowditch}
With these notations, the identity map on $\Gamma$ extends to an equivariant, continuous and surjective map
$$\Gamma \cup \partial \Gamma\rightarrow \Gamma \cup \partial_B \Gamma.$$
Moreover, a point $\xi\in \hat{\Gamma}$ is mapped to a conical point in $\partial_B \Gamma$ and a point $\xi \in \tilde{\gamma}\partial P$, for some $\tilde{\gamma} \in \widetilde{\Gamma/P}$ is mapped to a parabolic point in $\partial_B \Gamma$.
\end{lemma}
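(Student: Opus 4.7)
The extension $F$, once constructed, is automatically unique by density of $\Gamma$ in the metrizable compactification $\Gamma\cup\partial\Gamma$, and automatically $\Gamma$-equivariant since the $\Gamma$-action on $\Gamma$ by left-translation extends uniquely to both compactifications. Surjectivity will follow from continuity together with compactness: every $\alpha\in\partial_B\Gamma$ is a Bowditch limit of some sequence in $\Gamma$, whose accumulation points in the compact space $\Gamma\cup\partial\Gamma$ must lie in $F^{-1}(\alpha)$. The task therefore reduces to the following sequential continuity statement: whenever $\gamma_n\in\Gamma$ converges in the Dahmani topology to $\xi\in\partial\Gamma$, $(\gamma_n)$ converges in the Bowditch compactification to a point $F(\xi)$ depending only on $\xi$, and $F(\xi)$ is a conical limit point or a parabolic limit point according as $\xi\in\partial\hat\Gamma$ or $\xi\in\tilde\gamma\partial P$.

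Consider first $\xi\in\tilde\gamma\partial P$. Theorem~\ref{Dahmaniboundary} supplies $x_n\in P$ with $x_n\to\theta\in\partial P$ and uniformly bounded left-reduced relative-quasi-geodesics $\alpha_n$ from $\tilde\gamma x_n$ to $\gamma_n$. The BCP property, applied to $\alpha_n$ and to a word geodesic joining $\tilde\gamma x_n$ to $\pi_{\tilde\gamma P}\gamma_n$, forces the projection $\pi_{\tilde\gamma P}\gamma_n$ to remain within a uniformly bounded distance of $\tilde\gamma x_n$, so that in particular these projections leave every bounded subset of $\tilde\gamma P$. Working in the proper hyperbolic space $X$ given by Yaman's theorem, on which $\tilde\gamma P\tilde\gamma^{-1}$ acts cocompactly on a horosphere based at the parabolic point $\tilde\gamma\cdot p_P\in\partial X\cong\partial_B\Gamma$, this forces $\tilde\gamma x_n\cdot o_X\to\tilde\gamma\cdot p_P$ in $\overline X$; the uniform relative quasi-geodesicity of $\alpha_n$ then transports the convergence to $\gamma_n\cdot o_X\to\tilde\gamma\cdot p_P$, and we set $F(\xi)=\tilde\gamma\cdot p_P$.

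Consider next $\xi\in\partial\hat\Gamma$, so that $(\hat\gamma_n\mid\hat\gamma_m)_{\hat o}\to\infty$ in $\hat\Gamma$. The idea is to lift this to the divergence of $(\gamma_n\cdot o_X\mid\gamma_m\cdot o_X)_{o_X}$ in $X$: large $\hat\Gamma$-Gromov products force long fellow-traveling of relative geodesics in $C_S\Gamma$, and outside horoballs these lift to fellow-traveling paths in $X$, forcing the $X$-Gromov products to diverge as well. Consequently $(\gamma_n\cdot o_X)$ converges in $\overline X$ to a single point $F(\xi)\in\partial X$, independent of any subsequence extraction. That $F(\xi)$ is conical is then established by ruling out the parabolic alternative: if $F(\xi)=g\cdot p_Q$, the characterization of parabolic Bowditch convergence combined with the BCP property would force $\hat\gamma_n$ to stay at bounded $\hat\Gamma$-distance from the cone vertex $v(gQ)$, contradicting $\hat\gamma_n\to\infty$ in $\hat\Gamma$.

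With $F$ now defined pointwise on $\partial\Gamma$, continuity follows from a standard diagonal argument in the metrizable compactifications: given $\xi_k\to\xi$ in $\Gamma\cup\partial\Gamma$, one chooses $g_{k,n}\in\Gamma$ with $g_{k,n}\to\xi_k$ and extracts a diagonal subsequence $g_{k_i,n_i}\to\xi$, to which the sequential statement above applies to give $F(g_{k_i,n_i})\to F(\xi)$. The principal obstacle lies in the conical case, where the subtle point is to rule out parabolic Bowditch accumulation using only divergence of $\hat\Gamma$-Gromov products; this requires a careful Gromov-product comparison between $\hat\Gamma$ and $X$, for which the BCP property together with Yaman's hyperbolic model provides the bridge.
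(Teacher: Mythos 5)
Your overall strategy coincides with the paper's: define the map sequentially using Yaman's hyperbolic space $X$ (equivalently the truncated space obtained by deleting horoballs), treat the two types of boundary points separately, and then obtain surjectivity, equivariance and continuity from density and compactness. The parabolic case is essentially the paper's argument: the paper also observes (via the BCP property) that $\pi_{\tilde{\gamma}P}(\gamma_n)$ stays boundedly close to $\tilde{\gamma}x_n$, and then invokes a result of Maher to see that geodesics from $o$ to $\gamma_n$ pass near the horosphere projection, which is what your ``transport along the relative quasi-geodesics'' amounts to; this part is fine, modulo writing out that last step.

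The conical case, however, contains a genuine gap. To rule out a parabolic limit you assert that if $\gamma_n\cdot o_X\to g\cdot p_Q$ with $p_Q$ parabolic, then the BCP property forces $\hat{\gamma}_n$ to stay at bounded $\hat{\Gamma}$-distance from the cone vertex $v(gQ)$. That implication is false: convergence to a parabolic point only forces word geodesics $[e,\gamma_n]$ to penetrate the coset $gQ$ for longer and longer (equivalently, the exit points drift to infinity inside $gQ$); after exiting, $\gamma_n$ may continue arbitrarily far, so $d_{\hat{\Gamma}}(\hat{\gamma}_n, v(gQ))$ can tend to infinity (take $\gamma_n=gq_nh_n$ with $q_n\to\infty$ in $Q$ and $|h_n|\to\infty$ slowly). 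So the contradiction you invoke --- with the mere unboundedness of $(\hat{\gamma}_n)$ in $\hat{\Gamma}$ --- is not available. What is true is that such a sequence cannot have divergent Gromov products $(\hat{\gamma}_n\mid\hat{\gamma}_m)_{\hat{e}}$, but proving that requires an extra ingredient (almost-malnormality of the peripheral subgroups, or a quantitative use of BCP showing that relative geodesics which enter $gQ$ at a common point but exit at points far apart in $gQ$ cannot fellow-travel in $\hat{\Gamma}$ beyond $v(gQ)$). The paper sidesteps this entirely by arguing directly: divergence of the $\hat{\Gamma}$-Gromov products yields word geodesics that fellow-travel for arbitrarily long \emph{and spend arbitrarily long outside all cosets}; their images in $X$ are quasi-geodesics fellow-traveling arbitrarily long outside all horoballs, and a boundary point reached this way is conical by definition. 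You should either adopt that direct argument or supply the missing malnormality/BCP step; as written, the exclusion of the parabolic alternative does not go through.
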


\begin{proof}
We first construct a map $\partial \Gamma \rightarrow \partial_B \Gamma$.
Consider the space $Y$ which consists on the space $X$ with disjoint horoballs removed at every parabolic point
and endow $Y$ with the length metric coming from the induced distance of $X$ on $Y$.
The map $\Gamma \rightarrow Y$ given by $\gamma\mapsto \gamma\cdot o$ (where $o$ is a fixed base point) is a quasi-isometry.
If a sequence $(\gamma_n)$ converges in $\partial \Gamma$, then its image in $X$ tends to infinity.
If it converges to $\partial \hat{\Gamma}$, then the Gromov product of $\gamma_n$ and $\gamma_m$ based at some point in $\hat{\Gamma}$ tends to infinity, when $n$ and $m$ tend to infinity.
Thus, geodesics in $\hat{\Gamma}$ from a fixed base point to $\gamma_n$ and to $\gamma_m$ travel together for arbitrary large time.
This provides geodesics in the Cayley graph $C_S\Gamma$ that travel together for arbitrary large time and spend arbitrary large time outside cosets of peripheral subgroups.
The images in $X$ are quasi-geodesics that travel together for arbitrary large time and spend arbitrary large time outside horoballs corresponding to parabolic points.
Thus, the image of $(\gamma_n)$ converges to a conical point.
Now, if $(\gamma_n)$ converges to $\tilde{\gamma}\partial P$, then its projection on $\tilde{\gamma}P$ converges to the same point and stays in the coset $\tilde{\gamma}P$ of $P$.
Thus, the projection $(\gamma'_n)$ of the image of $(\gamma_n)$ in $X$ on the corresponding horosphere $H$ converges to the corresponding parabolic point.
Denote this parabolic point by $\alpha$.
A geodesic from a fixed base point $o$ to $\gamma_n$ follows approximately a geodesic from $o$ to $\gamma'_n$, then from $\gamma'_n$ to $\gamma_n$ (see \cite[Proposition~3.4]{Maher}), so that the Gromov product of $\gamma_n$ and $\alpha$ based at $o$ converges to infinity.
Thus, $(\gamma_n)$ also converges to $\alpha$.
Thus, if $\gamma \in \partial \Gamma$, there is a uniquely defined $\tilde{\gamma}\in \partial_B \Gamma$ and we can consider the map $\gamma \in \partial \Gamma \mapsto \tilde{\gamma} \in \partial_B \Gamma$.

By construction, if $(g_n)\in \Gamma$ converges to $\gamma$, then $(g_n\cdot o)$ converges to $\tilde{\gamma}$ in the Bowditch boundary.
By density of $\Gamma$ in the Bowditch compactification, this map is surjective and by compactness of the Bowditch compactification and density of $\Gamma$ in $\Gamma\cup \partial \Gamma$, it is continuous (see the similar proof of Lemma~\ref{equivalentZbdry}).
By construction, it is equivariant and continuously extends the identity map $\Gamma \rightarrow \Gamma$.
\end{proof}

Theorem~\ref{Dahmaniboundary} together with Lemmas~\ref{lemmafinitesets} and~\ref{DahmanitoBowditch} prove that $\partial \Gamma$ is a $Z$-boundary.
As stated above, Dahmani's construction still holds when there is more than one conjugacy class of parabolic subgroups (see \cite[Section~6.2]{Dahmani}).

\bibliographystyle{plain}
\bibliography{rel_hyp_groups}
\end{document}